\renewcommand*\env@matrix[1][*\c@MaxMatrixCols c]{%
  \hskip -\arraycolsep
  \let\@ifnextchar\new@ifnextchar
  \array{#1}}
\def\dispace{\setlength{\itemsep}{2pt}}
\def\Perp{\operatorname*{\perp}}
\def\bigPerp{\operatorname*{\mbox{\large $\perp$} } }
\def\BigPerp{\operatorname*{\mbox{\Large $\perp$} } }
\def\mfB{\mathfrak B}
\def\mfC{\mathfrak C}
\def\alt{{\operatorname{alt}}}
\newcommand{\etype}[1]{\renewcommand{\labelenumi}{(#1{enumi})}}
\def\eroman{\etype{\roman}}
\def\ealph{\etype{\alph}}
\def\pSkip{\vskip 1.5mm \noindent}
\newcommand{\ds}[1]{\ {#1} \ }
\newcommand{\dss}[1]{\quad {#1} \quad }
\def\sm{\setminus}
\def\00{ \{ 0 \}}
\def\veps{\varepsilon}
\def\X1{X_1}
\def\Y1{Y_1}
\def\tlb{\tilde b}
\def\tlq{\tilde q}
\def\tlV{\widetilde V}
\def\t{{\operatorname {t}}}
\def\al{\alpha}
\def\bt{\beta}
\def\gm{\gamma}
\def\Gm{\Gamma}
\def\tlGm{\widetilde{\Gm}}
\def\sig{\sigma}
\newcommand{\opl}{\operatornamewithlimits{\bigoplus\limits}}
\newtheorem{thm}{Theorem} [section]
\newtheorem*{thm*}{Theorem}
\newtheorem{cor}[thm]{Corollary}
\newtheorem{lem}[thm]{Lemma}
\newtheorem{lemma}[thm]{Lemma}
\newtheorem{prop}[thm]{Proposition}
\newtheorem*{claim*} {Claim}
\newtheorem*{theorem4.5'} {Theorem 4.5$'$}
\newtheorem{acknowledgment*}[thm] {Acknowledgment}
\newtheorem{example}[thm]{Example}
\newtheorem{examp}[thm]{Example}
\newtheorem*{examp*}{Example}
\newtheorem{examples}[thm]{Examples}
 \newtheorem{remark}[thm]{Remark}
  \newtheorem{remarks}[thm]{Remarks}
 \newtheorem*{remark*}{Remark}
 \newtheorem{defn}[thm]{Definition}
\newtheorem{construction}[thm]{Construction}
\newtheorem{schol}[thm]{Scholium}
\newtheorem{notationdefinition}[thm]{Notation/Definition}
\newtheorem*{notation*} {Notation}
\theoremstyle{remark}
\newtheorem*{caution*} {Caution}
\newtheorem*{comment*} {Comment}
\newcommand{\thmref}[1]{Theorem~\ref{#1}}
\newcommand{\propref}[1]{Proposition~\ref{#1}}
\newcommand{\lemref}[1]{Lemma~\ref{#1}}
\def\NQL{$\operatorname{NQL}$}
 \renewcommand{\sectionmark}[1]{}
\newcommand{\bfem}[1]{\textbf{#1}}
 \newcommand{\dl}{\delta}
\newcommand{\Dl}{\Delta}
\newcommand{\lm}{\lambda}
\newcommand{\Lm}{\Lambda}
 \newcommand{\supp} {\operatorname{supp}}
\begin{document}

\title[Quadratic and Symmetric Bilinear Forms on Modules ]
{Quadratic and Symmetric Bilinear Forms \\[2mm] on Modules with Unique Base \\[2mm] Over a Semiring}
\author[Z. Izhakian]{Zur Izhakian}
\address{Institute  of Mathematics,
 University of Aberdeen, AB24 3UE,
Aberdeen,  UK.}
    \email{zzur@abdn.ac.uk; zzur@math.biu.ac.il}
\author[M. Knebusch]{Manfred Knebusch}
\address{Department of Mathematics,
NWF-I Mathematik, Universit\"at Regensburg 93040 Regensburg,
Germany} \email{manfred.knebusch@mathematik.uni-regensburg.de}
\author[L. Rowen]{Louis Rowen}
 \address{Department of Mathematics,
 Bar-Ilan University,  Ramat-Gan 52900, Israel}
 \email{rowen@math.biu.ac.il}

%\thanks{The research of the first and third authors was supported  by the
% Israel Science Foundation (grant No.  448/09).}

%\thanks{The research of the second author was supported in part by
% the Gelbart Institute at
%Bar-Ilan University, the Minerva Foundation at Tel-Aviv
%University, the Department of Mathematics   of Bar-Ilan
%University, and the Emmy Noether Institute at Bar-Ilan
%University.}

%\thanks{The research of the first author was supported  by the
%Oberwolfach Leibniz Fellows Programme (OWLF), Mathematisches
%Forschungsinstitut Oberwolfach, Germany.}

%******************************* AMS classification ***********************
%\subjclass{??? Primary 12K10, 13B25; Secondary 51M20}
%\subjclass[2010]  {Primary: 13A18, 13F30, 16W60, 16Y60; Secondary:
%03G10, 06B23, 12K10,   14T05}

%******************************* AMS classification ***********************
\subjclass[2010]{Primary 15A03, 15A09, 15A15, 16Y60; Secondary
14T05, 15A33, 20M18, 51M20}

%******************************* date *************************************
\date{\today}

%******************************* keywords *********************************

\keywords{Semirings, (semi)modules,  bilinear forms,
quadratic forms, symmetric forms, orthogonal decomposition.}

%******************************* file name *********************************

%\thanks{\noindent \underline{\hskip 3cm }  \\ File name: \jobname}

%******************************* abstract *********************************

\begin{abstract} We study quadratic forms on free modules with
unique base, the situation that arises in tropical algebra, and
prove the analog of Witt's Cancellation Theorem. Also, the tensor
product of an indecomposable bilinear module $(U, \gm)$ with an
indecomposable quadratic module $(V,q) $ is indecomposable, with the
exception of one case, where two indecomposable components arise.
\end{abstract}

\maketitle

\tableofcontents

\numberwithin{equation}{section}

\section*{Introduction}

  Recall that a \bfem{semiring}~$R$ is a set~$R$ equipped with addition and multiplication,
  such that both $(R,+)$ and $(R,\cdot)$ are abelian monoids\footnote{A monoid means a semigroup that has a neutral element. Here a semiring $R$ is tacitly assumed to be commutative.} with elements $0=0_R$ and $1=1_R$ respectively, and multiplication distributes over addition in the usual way.
  We always assume that $R$ is a commutative semiring with $1.$
In other words, $R$ satisfies all the properties of a commutative
ring except the existence of negation under addition. We call
  a semiring $R$ a \bfem{semifield}, if every nonzero element of $R$ is invertible;
  hence $R\setminus\{0\}$ is an abelian group.

As in the classical theory,
 one often wants to consider bilinear forms defined on (semi)modules over
  a  semiring $R$, often a supertropical semifield, in order to obtain more sophisticated
  trigonometric information.
A \bfem{module} $V$ over $R$ is an abelian monoid $(V,+)$ equipped
with a scalar multiplication $R\times V\to V,$ $(a,v)\mapsto av,$
such that exactly the same axioms hold as customary for modules if
$R$ is a ring: $a_1(bv)=(a_1 b)v,$ $a_1(v+w)=a_1v+a_1w,$
$(a_1+a_2)v=a_1v+a_2v,$ $1_R\cdot v=v,$ $0_R\cdot v=0_V=a_1\cdot
0_V$ for all $a_1,a_2,b\in R,$ $v,w\in V.$ Then  bilinear forms on
$V$ are defined in the obvious way. We write $0$ for both $0_V$
and~$0_R$, and 1 for $1_R.$

The object of this paper is classify quadratic forms over free
modules, with application to the supertropical setting. Actually,
when considering modules over semifields, one encounters several
versions of ``free,'' as studied in depth in \cite[\S4 and
\S5.3]{IzhakianKnebuschRowen2010LinearAlg}. Here we take the most
restrictive version, and call an $R$-module $V$ \bfem{free}, if
there exists a family $(\veps_i \ds|i\in I)$ in $V$ such that every
$x\in V$ has a unique presentation $x=\sum\limits_{i\in I}
x_i\veps_i$ with scalars $x_i\in R$ and only finitely many $x_i $
nonzero, and we call $(\veps_i \ds |i\in I)$ a \bfem{base} of the
$R$-module $V.$ The obvious example is $V = R^{n}$, with the
classical  base. In fact, any free module with a base of $n$
elements is clearly isomorphic to~$R^{n}$, under the map
$\sum\limits_{i=1}^n x_i\veps_i\mapsto (x_1, \dots, x_n).$ The
results are decisive when $R$ is a so-called \textbf{supertropical
semiring},  cf.~ Theorem~\ref{thm:1.2} below.

%\medskip
%
% Any free module over a supertropical
%semiring has a unique base.
%
%\medskip
%
%Thus, we recast the theory more generally for modules having a
%unique base over the given semiring.

As in the classical theory, we are led naturally to our main notion
of this paper: For the reader's convenience, we quote some
terminology and results from \cite[\S1-\S4]{QF1}.

 \begin{defn}\label{defn:I.0.1} For any   module $V$  over a semiring $R$,
    a \bfem{quadratic form
on} $V$ is a function $q: V\to R$
with\begin{equation}\label{eq:I.0.1} q(ax)=a^2q(x)\end{equation}
 for any $a\in R,$ $x\in V,$ together with a
symmetric bilinear form $b: V\times V\to R$ (not necessarily
uniquely determined by $q$) such that for any $ x,y\in V$
\begin{equation}\label{eq:I.0.2} q(x+y)=q(x)+q(y)+b(x,y).\end{equation}
 Every such bilinear form $b$ will be called a
\bfem{companion} of $q$, and the pair $(q,b)$ will be called a
\bfem{quadratic pair} on $V.$ We also call $V$ a \textbf{quadratic
module}.
\end{defn}
 When $R$ is a
ring, then $q$ has just one companion, namely,
$b(x,y):=q(x+y)-q(x)-q(y),$ but if $R$ is a semiring that cannot be
embedded into a ring, this usually is not the case, and it is a
major concern of quadratic form theory over semiring to determine
all companions of a given quadratic form $q: V \to R$. Much of the
paper \cite{QF1} is devoted to this problem in the case that $V$ is
a free $R$-module over  a  supertropical semiring, but the first
four sections of~\cite{QF1} deal with quadratic forms and pairs over
an arbitrary semiring, and we draw from these  results in the
present paper.

A quadratic form $q: V \to R$ is called \textbf{quasilinear}, if $q$
has the companion $b =0 $, i.e., $q(x+y) = q(x) + q(y)$ for all $x,y
\in V.$ Assume that $V$ is free with base $(\veps_i \ds: i \in I).$
Then quasilinearity of $V$ implies that, for any vector $x=
\sum\limits_{i\in I } x_i\veps_i$ in $V$,
\begin{equation}\label{eq:I.0.3} q(x)=\sum\limits_{i=1}^n x_i^2 q(\veps_i),\end{equation}
i.e., $q$ has \textbf{diagonal form} with respect to the base
$(\veps_i \ds: i \in I)$.

Under the assumption that for all $a,b \in R$
\begin{equation}\label{eq:I.0.4} (a+b)^2 = a^2 + b^2,\end{equation}
we read off from \eqref{eq:I.0.2} that all diagonal forms are
quasilinear; so diagonality means the same as quasilinearity. In the
present paper we seldom require that $R$ has property
\eqref{eq:I.0.4}, but ``\emph{partial quasilinearity},'' defined as
follows,  plays a major role when we consider orthogonal
decompositions of quadratic modules.

 \begin{defn}\label{defn:I.0.2}

 Given subsets $S$ and $T$ of  $V$, we say that $q$ is \bfem{quasilinear on}
 $S \times T$ if
\begin{equation*} q(x + y)= q(x) + q(y).\end{equation*}
for all $ x\in S$, $y \in T.$
  \end{defn}
The following fact will be of help below, as special case of
\cite[Lemma 1.8]{QF1}. (We write $S+ S'$ for $ \{ s +s' \ds: s\in S,
s' \in S' \}.$) \begin{lem}\label{lem:I.0.3} Let $S, S', T$ be
subsets of $V$. If $q$ is quasilinear on $S \times T$, $S' \times T$
and $S \times S'$, then $q$ is quasilinear on $(S+ S') \times T$.
  \end{lem}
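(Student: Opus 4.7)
The plan is to fix elements $s \in S$, $s' \in S'$, $t \in T$ and any companion $b$ of $q$, and then show directly that $q((s+s')+t) = q(s+s')+q(t)$. The point is that the three quasilinearity hypotheses, combined with \eqref{eq:I.0.2}, translate into the three ``absorption'' identities
\[
q(s)+q(t)+b(s,t)=q(s)+q(t), \quad q(s')+q(t)+b(s',t)=q(s')+q(t), \quad q(s)+q(s')+b(s,s')=q(s)+q(s'),
\]
which I treat as one-directional rewriting rules.

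Next, I expand $q((s+s')+t)$ via \eqref{eq:I.0.2} and the bilinearity of $b$:
\[
q((s+s')+t) \;=\; q(s+s')+q(t)+b(s+s',t) \;=\; q(s)+q(s')+b(s,s')+q(t)+b(s,t)+b(s',t).
\]
Applying the three absorption identities in turn (adding the missing summands to both sides of each) successively removes the cross-terms $b(s,s')$, $b(s,t)$, and $b(s',t)$, collapsing the right-hand side to $q(s)+q(s')+q(t)$. On the other hand, a single use of the third identity rewrites $q(s+s')+q(t)=q(s)+q(s')+b(s,s')+q(t)$ as $q(s)+q(s')+q(t)$. Comparing the two expressions gives the desired equality.

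The main obstacle is purely one of bookkeeping: because the ambient semiring has no additive cancellation, each step must be phrased as ``add the same element to both sides of an earlier identity,'' never as a subtraction. Once the quasilinearity hypotheses are recorded as absorption rules in the form above, the argument reduces to a short chain of such additions and presents no further conceptual difficulty.
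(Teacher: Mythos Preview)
Your proof is correct. The paper itself does not supply a proof of this lemma; it merely records it as a special case of \cite[Lemma~1.8]{QF1}. Your direct argument---recording each quasilinearity hypothesis as an absorption identity of the form $q(x)+q(y)+b(x,y)=q(x)+q(y)$ and then chaining these by adding suitable summands to both sides---is exactly the kind of elementary semiring manipulation one expects here, and it avoids any appeal to cancellation. There is nothing to compare against in the present paper, but your write-up stands on its own.
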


On the other hand, a quadratic form $q: V \to R$ is called
\textbf{rigid} if $q$ has only one companion. For $V$  free with
base $(\veps_i \ds: i \in I),$   $q$ is rigid whenever $q(\veps_i)
=0$ for all $i \in I$ \cite[Proposition~3.4]{QF1}. Under the
assumption that $R$ has property \eqref{eq:I.0.4} and that, for any
$a\in R,$
 \begin{equation}\label{eq:I.0.5} a+ a = 0 \dss{\Rightarrow} a = 0\end{equation}
the converse holds too,
 so by  \cite[Theorem 3.5]{QF1} the rigid forms are precisely those with
$q(\veps_i) = 0$ for all $i \in I$. \{We note in passing that both
\eqref{eq:I.0.4} and \eqref{eq:I.0.5} are valid when $R$ is
supertropical.\}

We say that  $V$ is an \textbf{$R$-module with unique base},  if $V$
is a  free $R$-module and, given a base $\mfB = \{ \veps_i \ds: i
\in I\}$ of $V$, any other base of $V$ is obtained from $ \mfB$ by
multiplying the $\veps_i$ by units of $R$. In the most important
case that $I$ is finite, i.e., $I = \{1, \dots, n \}$, we have:

\begin{remark}\label{chbas} Any change of base of the free module $R^{n}$ is attained by multiplication by an
invertible $n\times n$ matrix, so having unique base is equivalent
to  every invertible matrix  in $M_n(R)$ being a generalized
permutation matrix.\end{remark}

The present paper is devoted to a study of the
quadratic  and symmetric bilinear forms on $R$-modules with unique base.
More specifically we work on  orthogonal decompositions of such forms,
and  on tensor products of two symmetric bilinear forms and
of  a symmetric bilinear form with a quadratic form.

So our first question is, ``What conditions on the semiring $R$
guarantee that  $R^{n}$ has unique base, or equivalently, that every
invertible matrix is generalized permutation?'' The matrix question
was answered by \cite{Tan, Dol}. In their terminology, an
``antiring'' is a semiring~$R$ such that $R \sm \{0\}$ is closed
under addition, and they classify the invertible matrices over
antirings. These are just the generalized permutation matrices when
$R\setminus \{ 0 \}$ also is closed under multiplication, which they
call ``entire'' (the case in tropical mathematics), and more
generally by \cite[Theorem~1]{Dol} (as interpreted in
Theorem~\ref{thm:1.7}) when $R$ is indecomposable, i.e., not
isomorphic to a direct product $R_1 \times R_2$ of semirings.

In our proofs of all of our results, we never use the matrix
interpretation of the unique base property. \{In fact matrices show
up only once, in Corollary~\ref{cor:4.6}.\}

 Other than the trivial fact that every free
$R$-module of rank 1 has unique base, all examples known to us of
modules with unique base  emanate from  Theorem~\ref{thm:1.7}. But
we feel that many arguments and related problems left open in the
paper are clearer if, when possible, we assume only that the
$R$-modules considered have unique base. (Only in  \S\ref{sec:6} do
we deviate from this strategy.)

%
%Clearly every free $R$-module of rank 1 has unique base. But apart
%form this trivial case all examples of modules with unique base
%known to us emanate from one theorem, Theorem~ \ref{thm:1.7} below,
%which states that if $R \sm \{0\}$  is closed under addition and $R$
%is indecomposable, i.e., not isomorphic to a direct product $R_1
%\times R_2$ of semirings, then every free $R$-module has unique
%base. \{In particular this holds if $R$ is a semiring such that $R
%\sm \{0\}$ is closed under addition and multiplication, sometimes
%called an ``entire antiring'' \cite{Dol, Tan}. It also holds if $R$
%is supertropical.\} Thus one may assume in the paper from the start
%that $R$ is such a semiring.

%
%As an illustration of the kind of problems arising here, assume that
%$V_1$ and $V_2$ are $R$-modules with unique base. Under which
%conditions is the direct sum\footnote{defined in the obvious way:
%every $x \in V$ has a unique presentation $x = y_1 + y_2$ with $y_i
%\in V_i.$} $V = V_1 \oplus V_2$  again a module with unique base?
%This can be false, as seen in general when $V_1$ and $V_2$ are free
%of rank~$1$. But it is certainly true if $R$ is a semiring as
%described above, since $V_1 \oplus V_2$ is again a free module.

In \S\ref{sec:2} we develop the notion of \textbf{(disjoint)
orthogonality} of two given disjoint submodules $W_1$ and~$W_2$ of a
\textbf{quadratic $R$-module} $(V,q)$ (endowed with a fixed
quadratic form $q$), which means that $q$ is  partially quasilinear
on $W_1 \times W_2$. (Note that there is no direct reference to an
underlying symmetric bilinear form.) When $V$ has unique base, we
look for orthogonal decompositions $V = W_1 \perp W_2$, more
generally $V = \bigPerp\limits_{i\in I} W_i$, where the $W_i$ are
\textbf{basic submodules}  of $V$, i.e., are generated by subsets of
a base $\mfB$ of $V$.

Theorem~\ref{thm:2.6} shows that there is a unique disjoint
orthogonal decomposition of $V$ into indecomposable basic
submodules.

In \S\ref{sec:3} we develop the analogous notion of disjoint
orthogonality in a \textbf{bilinear $R$-module} $(V,b)$ with respect
to  a fixed symmetric bilinear  form $b$ on $V$, and by passing from
$q$ to a suitable companion, we obtain the same
indecomposable basic submodules (Theorem~\ref{thm:3.9}).
%, both for a quasilinear module $(V,q)$ and a bilinear module
%$(V,b)$. We then obtain a unique orthogonal decomposition of $(V,b)$
%into \textbf{indecomposable (basic) components}.
 Here it helps to
modify $b$ to a related symmetric bilinear form $b_\alt$
(Definition~\ref{defn:3.5}) which facilitates a description of
$(V,b)$ as spanned by the connected components of a graph associated
with the base.

%For a quadratic module $(V,q)$ with unique base the same is possible, first replacing
%$(V,q)$ by $(V,b)$ with $b$ a suitable companion of $q$, and then switching  from $b$ to $b_\alt$. So we arrive at parallel decomposition theories of quadratic and bilinear modules with unique base (Theorem~\ref{thm:3.9}, Proposition \ref{prop:3.10}).

In \S\ref{sec:4}, these decomposition theories yield  an analogue of
Witt's cancellation theorem over fields of characteristic $\neq 2$
\cite{Witt}, given as Theorem~\ref{thm:4.9}: 
 If $W_1, W_1',
W_2, W_2'$ are finitely generated quadratic or bilinear modules with
unique base such that $W_1 \cong W_1'$ and $W_1 \perp W_2 \cong W_2
\perp W_2'$, then $W_2 \cong W_2'$ (where $\cong$ means
``isometric'').
Theorem~\ref{thm:4.9}  vindicates our somewhat exotic notion of
disjoint orthogonality. It actually  is given in more general terms,
where  $W_2$ need not be  finitely generated.

\pSkip 
The last two sections of the paper are devoted to tensor products of
arbitrary $R$-modules over a semiring $R$. While the theory of
tensor products of $R$-modules over general semirings can be carried
out in analogy to the usual classical construction over rings, it
requires the use of congruences, resulting in some technical issues
dealt with in \cite{H}, for example. But for free modules the basics
can be done as easily as over rings, especially when the bases are
assumed to be unique (since then one does not need to worry about
well-definedness).

In \S\ref{sec:5} we construct the \textbf{tensor product of two free
bilinear $R$-modules} over any semiring $R$, analogous to the case
where $R$ is a ring, cf.~\cite[\S2]{EKM}, \cite[I, \S5]{Spez}. We
then take the \textbf{tensor product of a free bilinear $R$-module
$U = (U, \gm)$ with a free quadratic $R$-module $V= (V,q)$}. A new
phenomenon occurs  here, in contrast to the theory over rings.  It
is necessary first to choose  a so called \textbf{balanced
companion} $b$ of $q$, which always exists, cf.~\cite[\S1]{QF1}, but
which is usually not unique. We then define the tensor product $U
\otimes_b  V$, depending on $b$, by choosing a so called
\textbf{expansion} $B: V \times V \to R$ of the quadratic pair
$(q,b)$ which is a (often non-symmetric) bilinear form $B$ with
$$ B(x,x) = q(x), \qquad B(x,y) + B(y,x) = b(x,y)$$
for all $x,y \in V$, cf.~\cite[\S1]{QF1} and then proceeding
essentially as in the case of rings, e.g.~\cite[Definition
1.51]{Spez}, \cite[p. 51]{EKM}\footnote{For $R$ a ring the ``$b$''
in the tensor product does not need to be specified since $q$ has
only one companion.}. The resulting quadratic form $\gm \otimes_b q$
does not depend on the choice of $B$ but often depends on the choice
of $b$. This is apparent already in the case $\gm = \left(
\begin{smallmatrix}
                                              0 & 1 \\
                                              1 & 0
                                            \end{smallmatrix}\right)$, where the matrix $b$ is stored in the quadratic polynomial $\gm \otimes_b q$, cf.~Example~ \ref{examp:5.8} below.

 In \S\ref{sec:6} we determine
  the indecomposable components of tensor products of  modules with unique base, first of two
indecomposable bilinear (free) $R$-modules, and then of an
indecomposable bilinear $R$-module with an indecomposable quadratic
$R$-module. For simplicity we assume here that $R \sm \{ 0\}$ is an
entire antiring, i.e., closed under multiplication and addition,
relying on Theorem \ref{thm:1.3} that, in this case, all free
$R$-modules have unique base.

Our main result of this section, Theorem \ref{thm:6.16}, states
that, discarding trivial situations and excluding some pathological
semirings, the tensor product of an indecomposable bilinear module
$(U, \gm)$ with an indecomposable quadratic module $(V,q) $ is again
indecomposable, with the exception of one case, where two
indecomposable components arise.

The proof of this result, and of the preceding theorems
\ref{thm:6.6} and \ref{thm:6.8} about tensor products of
indecomposable  bilinear modules as well, has a graph theoretic
flavor. We work with ``paths'' and ``cycles'' in the bases of $U,V$
and $U \otimes_R V$, and indeed we could associate graphs to $(U,
\gm), $ $(V,q)$, and $ (U,\gm) \otimes_b (V,q)$ in a way obvious
from the arguments, where these paths and cycles get the usual
graph-theoretic meaning. We have refrained from appealing to graph
theory here, since at this stage  no deeper theorems about graphs
are needed.

\section{$R$-modules with unique base and their basic submodules}\label{sec:1}

 \begin{defn}\label{defn:1.1}
An \textbf{$R$-module with unique base} is a free $R$-module $V$  in
which any two bases $\mfB,$ $\mfB'$ are projectively the same, i.e.,
we obtain the elements of $\mfB'$ from those of $\mfB$ by
multiplying by units of $R.$
\end{defn}

Our interest in these modules originates from the following two key facts.

\begin{thm}\label{thm:1.2} {\rm(cf.~\cite[Proposition~0.9]{QF1})} If $R$ is a supertropical semiring, then
every free $R$-module has unique base.
\end{thm}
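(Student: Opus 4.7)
The plan is to argue directly with base elements, exploiting the two properties that make $R\setminus\{0\}$ an entire antiring: nonzero sums of nonzero elements are nonzero (a consequence of the supertropical addition rules: the sum is the summand of strictly larger $\nu$-value when these differ, and a nonzero ghost otherwise), and $R$ has no zero divisors.

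Fix bases $\mfB = (\veps_i)_{i\in I}$ and $\mfB' = (\veps'_j)_{j\in J}$ of $V$, and a target element $\veps' = \veps'_{j_0}\in\mfB'$. The goal is to produce $i_0\in I$ and a unit $u\in R$ with $\veps' = u\,\veps_{i_0}$. Expand $\veps' = \sum_i a_i \veps_i$, and, for each $i$ occurring with $a_i\neq 0$, expand $\veps_i = \sum_j b_{ij}\veps'_j$. Substituting the second into the first and invoking uniqueness of the $\mfB'$-expansion of $\veps'$ yields
\[
\sum_i a_i b_{i,j_0} \;=\; 1, \qquad \sum_i a_i b_{i,j} \;=\; 0 \quad \text{for all } j\neq j_0.
\]
The antiring property forces each $a_i b_{ij} = 0$ in the zero sums, and entireness then gives $a_i = 0$ or $b_{ij} = 0$ for each such pair. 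From $\sum_i a_i b_{i,j_0}=1\neq 0$ pick some $i_0$ with $a_{i_0} b_{i_0,j_0}\neq 0$; combined with the previous vanishings (applied at $i=i_0$, where $a_{i_0}\neq 0$), this forces $b_{i_0,j} = 0$ for every $j\neq j_0$, and hence $\veps_{i_0} = b_{i_0,j_0}\,\veps'$.

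Multiplying the expansion $\veps' = \sum_i a_i\veps_i$ through by $b_{i_0,j_0}$ yields $\veps_{i_0} = \sum_i (b_{i_0,j_0}a_i)\veps_i$; uniqueness of the $\mfB$-expansion of $\veps_{i_0}$ then forces $b_{i_0,j_0}a_{i_0}=1$ (so $a_{i_0}$ is a unit with inverse $b_{i_0,j_0}$) and $b_{i_0,j_0}a_i = 0$ for $i\neq i_0$ (whence $a_i=0$ by entireness). Thus $\veps' = a_{i_0}\veps_{i_0}$ with $a_{i_0}$ a unit, as required. The map $\veps'\mapsto\veps_{i_0}$ is injective by uniqueness of $\mfB'$-expansions and surjective by the symmetric version of the argument, so $\mfB'$ is obtained from $\mfB$ by a bijection followed by multiplication by units. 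The main obstacle, and technical heart of the plan, is the passage from the vanishing sums $\sum_i a_i b_{ij} = 0$ to the individual equations $a_i b_{ij} = 0$; this step fails in general semirings and is precisely where supertropicality (through the antiring property) is indispensable.
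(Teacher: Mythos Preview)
Your double-expansion and coefficient-matching argument is correct \emph{given} the two properties you invoke, and it amounts to the paper's own proof of Theorem~\ref{thm:1.3} (entire antirings) recast without the support language. The gap lies in the reduction step: the claim that a supertropical semiring has no zero divisors is not justified here and is not part of the general definition in play. The paper is careful on exactly this point---it keeps Theorems~\ref{thm:1.2} and~\ref{thm:1.3} logically separate and derives the supertropical case not from entirety but from the more general Theorem~\ref{thm:1.7} (indecomposable antirings), using instead the bipotency-type fact that in a supertropical semiring $\mu_1+\mu_2=1$ forces $\mu_1=1$ or $\mu_2=1$, so no nontrivial orthogonal idempotent decomposition of~$1$ exists.

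Both of your appeals to entirety---concluding $b_{i_0,j}=0$ from $a_{i_0}b_{i_0,j}=0$, and concluding $a_i=0$ from $b_{i_0,j_0}a_i=0$---break down once zero divisors are permitted. The repair is precisely the extra work carried out in the proof of Theorem~\ref{thm:1.7}: when several base vectors $y_i$ persist in the expansion one shows that the associated scalars $\mu_i$ are orthogonal idempotents summing to~$1$, contradicting indecomposability of~$R$. So your argument proves Theorem~\ref{thm:1.3} rather than Theorem~\ref{thm:1.2}; to reach the supertropical statement you must either justify the no-zero-divisor claim from the definition you are using, or replace the two entirety steps by the idempotent argument.
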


\begin{thm}\label{thm:1.3} (cf.~\cite[\S2, Corollary 3]{Dol}, an alternative proof below.) If the set $R\setminus \{0\}$ is closed under addition
and multiplication (i.e., $a+b=0\Rightarrow a=b=0,$ $a\cdot
b=0\Rightarrow a=0$ or $b=0$), then every free $R$-module has
unique base.
\end{thm}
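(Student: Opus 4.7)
The plan is to compare two bases $\mfB = (\veps_i)_{i\in I}$ and $\mfB' = (\veps'_j)_{j\in J}$ by writing down the change-of-base coefficients and exploiting the antiring-plus-entire hypothesis, which forces each expansion to have exactly one nonzero term.

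First I would introduce transition scalars: write $\veps'_j = \sum_i a_{ij} \veps_i$ and $\veps_i = \sum_j c_{ji} \veps'_j$, with each sum finitely supported. Substituting one into the other and using the uniqueness of expansion in $\mfB$ gives
\[
\sum_j c_{ji}\, a_{kj} \; = \; \delta_{ik} \qquad (i,k \in I).
\]
For $k \neq i$ the sum equals $0$; since $R\sm\{0\}$ is closed under addition, \emph{every} summand vanishes, i.e.\ $c_{ji}\, a_{kj}=0$, and since $R$ is entire, for each $j$ either $c_{ji}=0$ or $a_{kj}=0$ for every $k \neq i$. The diagonal equation $\sum_j c_{ji}a_{ij}=1\neq 0$ then forces at least one summand to be nonzero: for each $i$ there exists $j=j(i)$ with $c_{j(i),i}\neq 0$ and $a_{i,j(i)}\neq 0$; and by the off-diagonal analysis applied at this $j(i)$, the column $j(i)$ of $(a_{kj})$ has a unique nonzero entry, in row $i$. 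Hence
\[
\veps'_{j(i)} \; = \; a_{i,j(i)} \veps_i .
\]

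Next I would verify that $a_{i,j(i)}$ is a unit and that $i\mapsto j(i)$ is a bijection $I\to J$. Substituting $\veps'_{j(i)} = a_{i,j(i)}\veps_i$ into $\veps_i = \sum_j c_{ji}\veps'_j$ and using uniqueness of expansion in $\mfB'$ yields $a_{i,j(i)}\, c_{j(i),i}=1$, so $a_{i,j(i)}$ is a unit, and $a_{i,j(i)}\, c_{j,i}=0$ for every other $j$; by entirety this forces $c_{j,i}=0$ for $j\neq j(i)$, which already shows that the index $j(i)$ is unique. Injectivity of $j(\cdot)$ follows because two distinct $i_1\neq i_2$ with $j(i_1)=j(i_2)=j$ would put two nonzero entries in column $j$ of $(a_{kj})$, contradicting what was shown above. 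For surjectivity, if some $j\in J$ were outside the image, then expanding $\veps'_j$ first in $\mfB$ and then rewriting each $\veps_i = a_{i,j(i)}^{-1}\veps'_{j(i)}$ would express $\veps'_j$ as a combination of the $\veps'_{j(i)}$ alone, violating uniqueness of expansion in $\mfB'$.

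Putting this together, $\mfB'$ is obtained from $\mfB$ by the permutation $i\mapsto j(i)$ together with multiplication by the units $a_{i,j(i)}$, which is exactly the definition of unique base. The only real subtlety is the bookkeeping in the last paragraph, making sure that the candidate bijection produced from the diagonal equation is forced to exhaust all of $J$; once the antiring hypothesis is used to collapse the off-diagonal sums term-by-term and entirety is used to kill products, everything follows from uniqueness of expansion in the two bases, and no matrix-theoretic apparatus is required.
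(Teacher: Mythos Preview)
Your argument is correct. The paper, however, takes a somewhat different and slightly slicker route: instead of writing out both systems of transition coefficients and the identity $\sum_j c_{ji}a_{kj}=\delta_{ik}$, it introduces for each nonzero $x\in V$ the \emph{support} $\supp_{\mfB}(x)\subset\mfB$ and observes that the antiring hypothesis makes support additive, $\supp_{\mfB}(x+y)=\supp_{\mfB}(x)\cup\supp_{\mfB}(y)$, while entirety makes it invariant under nonzero scalars. Writing a single base vector $x\in\mfB$ as $\sum_i\lambda_i y_i$ with $y_i\in\mfB'$ then forces $\supp_{\mfB}(y_i)=\{x\}$ for every $i$, so each $y_i$ is already a scalar multiple of $x$; two distinct such multiples in the base $\mfB'$ would contradict unique expansion, so the sum collapses to one term. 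The bijection between $\mfB$ and $\mfB'$ and the unit property then fall out by symmetry without separate injectivity/surjectivity bookkeeping. Your coefficient-matrix approach is the natural ``change-of-base'' argument, closer in spirit to the references the paper cites for the result; the support approach buys a cleaner endgame and, more importantly, is what the paper reuses verbatim as the starting point for the stronger Theorem~\ref{thm:1.7} (indecomposable antirings), where the support argument still isolates the relation $\lambda_i y_i=\mu_i x$ before a second idempotent argument finishes the job.
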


 Assume now that $V$ is
 a free $R$-module and $\mfB$ is a fixed base of $V.$

\begin{defn}\label{defn:1.4}
We call a submodule $W$ of $V$ \bfem{basic}, if $W$ is spanned by
$\mfB_W:=\mfB\cap W,$ and thus $W$ is free with base $\mfB_W.$ Note
that then we have a unique direct decomposition $V=W\oplus U,$ where
the submodule $U$ is basic with base $\mfB\setminus\mfB_W.$ $W$ and
$U$  again are $R$-modules with unique base. We call $U$ the
\bfem{complement of} $W$ \bfem{in} $V,$ and write $U=W^c.$
\end{defn}

The theory of basic submodules of $V$ is of utmost simplicity. All
of the following is obvious.

\begin{schol}\label{schol:1.5} $ $
\begin{enumerate} \ealph

\item We have a bijection $W\mapsto \mfB_W:=\mfB\cap W$ from the set of basic submodules of
$V$ onto the set of subsets of $\mfB.$ \pSkip

\item If $W_1$ and $W_2$ are basic submodules of $V,$ then also $W_1\cap W_2$ and $W_1+W_2$ are  basic submodules of  $V,$ and
$$\mfB_{W_1\cap W_2}=\mfB_{W_1}\cap \mfB_{W_2},\qquad \mfB_{W_1+W_2}=\mfB_{W_1}\cup \mfB
_{W_2}.$$ \pSkip

\item If $W$ is a basic submodule of $V,$ then as stated above,
$$\mfB_{W^c}=\mfB\setminus \mfB_W.$$
\pSkip

\item Finally, if $W_1\subset W_2$ are basic submodules of $V,$ then $W_1$ is basic in $W_2$ and
$W_1^c\cap W_2$ is the complement of $W_1$ in $W_2.$
\end{enumerate}
\end{schol}

In view of Remark~\ref{chbas}, \thmref{thm:1.3} follows from Dol\u
zan and Oblak \cite[\S2, Corollary~3]{Dol} using matrix arguments
within a wider context extending work of Tan \cite[Proposition~
3.2]{Tan}, which in turn relies on Golan's  book on semirings
\cite[Lemma 19.4]{golan92}. We now reprove Theorem \ref{thm:1.3} by
a simple matrix-free argument in preparation for a reproof of the
more general Theorem \ref{thm:1.7}.

\begin{proof}[Proof of \thmref{thm:1.3}]
Let $V$ be a free $R$-module and $\mfB$ a base of $V.$ If $x\in V\setminus \{0\}$ is given, we have a
presentation
$$x=\sum_{x=1}^r \lm_ix_i$$
with $x_i\in\mfB$ and $\lm_i\in R\setminus\{0\}$. We call the set
$\{x_1,\dots,x_r\}\subset\mfB$ the \bfem{support} of $x$ with
respect to $\mfB$ and denote this set by $\supp_{\mfB}(x).$ Note
that if $x,y\in V\setminus\{0\},$ then $x+y\ne0$ and
\begin{equation}\label{eq:1.01}
\supp_{\mfB}(x+y)=\supp_{\mfB}(x)\cup\supp_{\mfB}(y)\end{equation}
due to the assumption that $\lm+\mu\ne 0$ for any $\lm,\mu\in
R\setminus\{0\}$. Also
\begin{equation}\label{eq:1.02}
\supp_{\mfB}(\lm x)=\supp_{\mfB}(x)\end{equation}
for $x\in V\setminus\{0\},$ $\lm\in R\setminus\{0\}$, due to the assumption that for $\lm,\mu\in R
\setminus\{0\}$ we have $\lm\mu\ne0.$

Now assume that $\mfB'$ is a second base of $V.$ Given $x\in\mfB,$ we have a presentation
$$x=\lm_1y_1+\cdots+\lm_ry_r$$
with $\lm_i\in R\setminus\{0\}$ and distinct $y_i\in\mfB'.$ It
follows from \eqref{eq:1.01} and \eqref{eq:1.02}
that
$$\{x\}=\supp_{\mfB}(x)=\supp_{\mfB}(y_1)\cup\dots\cup\supp_{\mfB}(y_r).$$

This forces
\begin{equation}\label{eq:1.03}\{x\}=\supp_{\mfB}(y_1)=\dots =\supp_{\mfB}(y_r).\end{equation}
From this, we infer that $r=1.$
Indeed, suppose that $r\ge 2.$ Then $y_1=\mu_1x,$ $y_2=\mu_2x$ with $\mu_1,\mu_2\in R\setminus\{0\}.$
But this implies $\mu_2y_1=\mu_1y_2,$ a contradiction since $y_1,y_2$ are different elements of a base of $V.$

Thus $\{x\}=\supp_{\mfB}(y)$ for a unique $y\in\mfB',$ which means $y=\lm x$ with $\lm\in R\setminus\{0\}.$ By symmetry we have a unique $z\in\mfB$ and $\mu\in R\setminus\{0\}$ with $x=\mu z.$ Then $x=\lm\mu z,$ whence $x=z$ and $\lm\mu=1.$ Thus $\lm,\mu\in R^*$ and $x\in R^*y,$ $y\in R^*x.$ Of course, $y$ runs through all of ~$\mfB'$ if $x$ runs through $\mfB,$ since both $\mfB$ and $\mfB'$ span the module $V.$
\end{proof}

With further effort, we  now obtain a theorem that encompasses both
Theorems \ref{thm:1.2} and~\ref{thm:1.3}.

\begin{defn}\label{defn:1.6} We say that the semiring $R$ is \bfem{indecomposable} if $R$ is not isomorphic to a direct product $R_1\times R_2$ of non-zero semirings $R_1$ and $R_2;$ in other words, there do not exist idempotents $\mu_1\ne0$ and $\mu_2\ne0$ in $R$ with $\mu_1\mu_2=0$ and $\mu_1+\mu_2=1.$
\end{defn}

\begin{thm}[{\cite[Theorem~1]{Dol}}]\label{thm:1.7} Assume that $R\setminus\{0\}$ is an indecomposable antiring. Then every free $R$-module
has unique base.
\end{thm}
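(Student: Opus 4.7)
I would imitate the matrix-free argument given for Theorem~\ref{thm:1.3}, but replace the support-tracking (which relied crucially on $R\setminus\{0\}$ being closed under multiplication) with an orthogonal-idempotents construction that exploits the indecomposability hypothesis. Given two bases $\mfB, \mfB'$ of a free $R$-module $V$, write the change-of-basis coefficients
\[
x = \sum_{y \in \mfB'} \lambda_{x,y}\, y, \qquad y = \sum_{z \in \mfB} \mu_{y,z}\, z,
\]
each with finite support. Substituting one expansion into the other and comparing coefficients (using that $\mfB$, resp.\ $\mfB'$, is a base) yields
\[
\sum_{y} \lambda_{x,y}\mu_{y,x} = 1, \qquad \sum_{y} \lambda_{x,y}\mu_{y,z} = 0 \quad (z \neq x),
\]
and symmetrically $\sum_{x} \mu_{y,x}\lambda_{x,y} = 1$ and $\sum_{x} \mu_{y,x}\lambda_{x,y'} = 0$ for $y' \neq y$. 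Since $R \setminus \{0\}$ is closed under addition, every such vanishing sum is termwise zero; in particular $\lambda_{x,y}\mu_{y,z} = 0$ for all $z \neq x$ and $\mu_{y,x}\lambda_{x,y'} = 0$ for all $y' \neq y$.

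Now fix $x \in \mfB$ and set $e_y := \lambda_{x,y}\mu_{y,x}$ for $y \in \mfB'$. A short calculation, using commutativity of $R$ together with the relations just obtained, gives $e_y e_{y'} = \lambda_{x,y}(\mu_{y,x}\lambda_{x,y'})\mu_{y',x} = 0$ for $y \neq y'$, and then $e_y^2 = e_y \cdot \sum_{y'} e_{y'} = e_y$. So the (finitely many nonzero) $e_y$'s are pairwise orthogonal idempotents summing to $1$. If two of them, say $e_{y_1}$ and $e_{y_2}$, were nonzero, I would set $\mu_1 := e_{y_1}$ and $\mu_2 := \sum_{y \neq y_1} e_y$; both are idempotent with $\mu_1\mu_2 = 0$ and $\mu_1 + \mu_2 = 1$, and the antiring property rules out $\mu_2 = 0$ (since $e_{y_2}$ is one of its summands), violating indecomposability. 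Hence exactly one $e_y$ equals $1$; call that $y = \phi(x)$. Then $\lambda_{x,\phi(x)}$ is a unit of $R$ with inverse $\mu_{\phi(x),x}$. The symmetric construction produces $\psi: \mfB' \to \mfB$, and commutativity forces $\psi = \phi^{-1}$, so $\phi$ is a bijection.

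It remains to check that for each $x$ the expansion $x = \sum_y \lambda_{x,y} y$ has only the $y = \phi(x)$ term. Suppose $\lambda_{x,y_2} \neq 0$ for some $y_2 \neq \phi(x)$, and let $x_2 := \psi(y_2)$, which satisfies $x_2 \neq x$. Termwise vanishing in $\sum_y \lambda_{x,y}\mu_{y,x_2} = 0$ gives $\lambda_{x,y_2}\mu_{y_2,x_2} = 0$; but $\mu_{y_2,x_2}$ is a unit of $R$ (inverse of $\lambda_{x_2,y_2}$), forcing $\lambda_{x,y_2} = 0$, a contradiction. Therefore $x = \lambda_{x,\phi(x)}\phi(x)$, exhibiting $\mfB'$ as the image of $\mfB$ under element-wise multiplication by units.

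The main obstacle I anticipate is organizing the bookkeeping so that the orthogonality and idempotence of the $e_y$'s come out transparently, and making clear that indecomposability (rather than the stronger ``entire'' hypothesis of Theorem~\ref{thm:1.3}) genuinely suffices to rule out more than one nonzero $e_y$. Once the idempotent decomposition of $1$ is set up, everything else is mechanical and exactly parallels the end of the proof of Theorem~\ref{thm:1.3}.
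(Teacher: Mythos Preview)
Your argument is correct and follows essentially the same route as the paper: both proofs produce, for a fixed $x\in\mfB$, a finite family of pairwise orthogonal idempotents summing to $1$ from the change-of-basis data, and then invoke indecomposability to force all but one of them to vanish. The only cosmetic difference is that the paper first uses a support argument (via the antiring property) to show each $\lambda_i y_i$ is a scalar multiple $\mu_i x$ and takes those scalars as its idempotents, whereas you bypass supports and obtain the very same scalars directly as $e_y=\lambda_{x,y}\mu_{y,x}$ from the two transition systems; your cleanup step showing $\lambda_{x,y_2}=0$ for $y_2\neq\phi(x)$ corresponds to the paper's conclusion ``$r=1$''.
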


\begin{proof}
     Assume that $\mfB$ and $\mfB'$ are  bases of $V.$
     Given $x\in V\setminus\{0\}$, we write again
\begin{equation}\label{eq:1.1}
x=\lm_1y_1+\dots+\lm_ry_r
\end{equation}
with different $y_i\in\mfB',$ $\lm_i\in R \sm \{0\}.$ But now, instead of \eqref{eq:1.03} we can only conclude  that
\begin{equation}\label{eq:1.2.0}
\{x\}=\supp_{\mfB}(\lm_1y_1)=\dots=\supp_{\mfB}(\lm_iy_i).\end{equation}
 Thus we have scalars $\mu_i\in R\setminus\{0\}$ such that
\begin{equation}\label{eq:1.2}
 \lm_iy_i=\mu_ix\qquad\text{for}\quad 1\le i\le r.
\end{equation}
Suppose that $r\ge2.$ Then we have for all $i,j\in\{1,\dots, r\}$ with $i\ne j.$
$$\mu_j\lm_iy_i=\mu_j\mu_ix=\mu_i\mu_jx=\mu_i\lm_jy_j.$$
Since the $y_i$ are elements of a base, this implies $\mu_i\lm_j=\mu_j\lm_i=0$ for $i\ne j$ and then
\begin{equation}\label{eq:1.3}
\mu_i\mu_j=0\qquad\text{for}\quad i\ne j.
\end{equation}
On the other hand, we obtain from \eqref{eq:1.1} and \eqref{eq:1.2} that
$$x=\mu_1x+\mu_2x+\dots +\mu_rx,$$
and then
\begin{equation}\label{eq:1.4}
1=\mu_1+\mu_2+\dots +\mu_r.
\end{equation}
Multiplying \eqref{eq:1.4} with $\mu_i$ and using  \eqref{eq:1.3}, we obtain
\begin{equation}\label{eq:1.5}
 \mu_i^2=\mu_i.
\end{equation}
Thus
$$R\cong R\mu_1\times\dots\times R\mu_r.$$
This contradicts our assumption that $R$ is indecomposable.

We have proved that $r=1.$ Thus for every $x\in\mfB$ there exist unique $y\in\mfB'$ and $\lm\in R$ with $x=\lm y.$ By the same argument as in the end of proof of \thmref{thm:1.3}, we conclude that~$\mfB$ is projectively unique.
\end{proof}

 Of course, if $R\setminus\{0\}$ is closed under multiplication, i.e., $R$ has no zero divisors, then $R$ is indecomposable. This also holds when $R$ is supertropical (cf.~\cite[\S3]{IR1}, \cite[Definition 0.3]{QF1}), since then  for any two elements $\mu_1,\mu_2$ of $R$ with $\mu_1+\mu_2=1$ either $\mu_1=1$ or $\mu_2=1.$ Thus \thmref{thm:1.7} generalizes both Theorems  \ref{thm:1.2}  and \ref{thm:1.3}.

 The following example reveals that \thmref{thm:1.7} is the best we can hope for to guarantee
 that every free $R$-module has unique base, as long as we stick to the condition that $R$ is an antiring,
 a natural assumption for the remainder of this paper.

 \begin{examp}\label{examp:1.8}
If $R_0$ is an antiring, then $R:=R_0\times R_0$
  also is an antiring. Put $\mu_1=(1,0),$ $\mu_2=(0,1).$ These are idempotents in $R$ with $\mu_1\mu_2=0$ and $\mu_1+\mu_2=1.$ Now let $V$ be a free $R$-module with base $\mfB=\{\veps_1,\veps_2,\dots, \veps_n\},$ $n\ge 2,$  choose a permutation $\pi\in S_n,$ $ \pi\ne1,$ and define
 $$\veps_i':=\mu_1\veps_i+\mu_2\veps_{\pi(i)}\qquad (1\le i\le n).$$
 We claim that $\mfB ':=\{\veps_1',\dots, \veps_n'\}$ is  another base of $V.$

 Indeed, $V$ is a free $R_0$-module with base $(\mu_i\veps_j\ds|1\le i\le 2,\, 1\le j\le n).$ We have $$\mu_1\veps_i'=\mu_1\veps_i, \qquad \mu_2\veps_i'=\mu_2\veps_{\pi (i)},$$ and thus $(\mu_i\veps_j'\ds|1\le i\le 2,\, 1\le j\le n)$ is a permutation of this base over $R_0,$ i.e., regarded as a set, the same base. Thus certainly $\mfB'$ spans $V$ as  $R$-module.

 Given $x\in V,$ let $x=\sum\limits_1^n a_i\veps_i'$ with $a_i\in R.$ We have
 $$a_i=a_{i1}\mu_1+a_{i2}\mu_2\qquad\text{with}\qquad a_{i1}\in R_0,\ a_{i2}\in R_0,$$
 whence
 $$x=\sum_{i=1}^na_{i1}(\mu_1\veps_i)+\sum_{i=1}^na_{i2}(\mu_2\veps_{\pi(i)}).$$
 This shows that the coefficients $a_{i1},a_{i2}\in R_0$ are uniquely determined by $x,$ whence the coefficients $a_i\in R$ are also uniquely determined by $x.$ Our claim is proved.

 Since $\supp_{\mfB}(\veps_i')$ has two elements if $\pi(i)\ne i,$ $\mfB'$
 differs projectively from $\mfB.$ The base of the $R$-module~$V$ is not   unique.
 \end{examp}

 \section{Orthogonal decompositions of quadratic modules with unique base}\label{sec:2}

 Assume that $V$ is an $R$-module equipped with a fixed quadratic form $q:V\to R$. We then call $V=(V,q)$ a \bfem{quadratic $R$-module}.

 \begin{defn}\label{defn:2.1}  $ $
 \begin{enumerate} \ealph
   \item  Given two submodules $W_1,W_2$ of the $R$-module $V,$ we say that $W_1$ is \bfem{disjointly orthogonal}\footnote{Later we  say  ``orthogonal" for short, instead of ``disjointly orthogonal", when
   it is clear  a priori that $W_1\cap W_2=\{0\}.$} to $W_2,$ if $W_1\cap W_2=\{0\}$ and $q(x+y)=q(x)+q(y)$ for all $x\in W_1,$ $y\in W_2,$ i.e., $q$ is quasilinear on $W_1\times W_2.$
\pSkip
  \item We write $V=W_1 \Perp W_2$ if  $V=W_1\oplus W_2$ (as $R$-module)
  with $W_1$   disjointly orthogonal to $W_2 $.
We then call $W_1$ an  \bfem{orthogonal summand} of $W$, and $W_2$
an \bfem{orthogonal complement} of $W_1$ in $V.$
 \end{enumerate}
\end{defn}

\begin{caution*}
If $V =W_1\perp W_2$, we may choose a companion $b$ of $q$ such that
$b(W_1,W_2)=0,$ but note that it could well happen that the set of
all $x\in V$ with $b(x,W_1)=0$ is bigger than $W_2,$ even if $R$ is
a semifield and $q|W_1$ is anisotropic  (e.g., if $q$ itself is
quasilinear). Our notion of orthogonality does not refer to any
bilinear form.
\end{caution*}

We now also define infinite orthogonal sums. This seems to be natural, even if we are originally interested only in
finite orthogonal sums. Indeed, even if $R$ is a semifield, a free
$R$-module with finite base
often has many submodules which are not  finitely generated.

\begin{defn}\label{defn:3.2}
Let $(V_i\ds|i\in I)$ be a family of submodules of the quadratic module $V.$ We say that $V$ is \bfem{the orthogonal sum of the family} $(V_i)$, and then write
$$V= \bigPerp_{i\in I} V_i,$$
if for any two different indices  $i,j$ the submodule $V_i$ is disjointly  orthogonal to $V_j,$ and moreover
 $V=\opl\limits_{i\in I} V_i.$
\end{defn}

N.B. Of course, then for any subset $J\subset I,$ the module
$V_J=\sum\limits_{i\in J}  V_i$ is the orthogonal sum of the
subfamily $(V_i\ds|i\in J);$ in short,
$$V_J=\bigPerp_{i\in J} V_i.$$

\medskip

We state a fact which, perhaps contrary to first glance,  is not completely trivial.

\begin{prop}\label{prop:2.3}
Assume that we are given an orthogonal decomposition $V= \bigPerp\limits_{i\in I} V_i.$ Let~$J$ and $K$ be two disjoint subsets of $I$. Then the submodule $V_J=\bigPerp\limits_{i\in J} V_i$ of $V$ is disjointly orthogonal to $V_K=\bigPerp\limits_{i\in K} V_i$, and thus
$$V_{J\cup K}=V_J \perp V_K.$$
\end{prop}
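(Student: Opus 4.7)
The plan is to observe that $V_J\cap V_K=\{0\}$ is automatic from the external direct sum structure of $V=\bigoplus_{i\in I} V_i$ together with $J\cap K=\emptyset$. So the real content of the proposition is to verify the quasilinearity condition, i.e., that $q(x+y)=q(x)+q(y)$ for every $x\in V_J$ and every $y\in V_K$. I would reduce to finite index sets and then induct, with \lemref{lem:I.0.3} doing essentially all the work.

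First I would reduce to the case of finite $J$ and $K$. Since $V=\bigoplus_{i\in I} V_i$ as an $R$-module, every $x\in V_J$ lies in $V_{J_0}:=\sum_{i\in J_0}V_i$ for some finite $J_0\subseteq J$, and similarly $y\in V_{K_0}$ for some finite $K_0\subseteq K$. If the conclusion is known whenever both index sets are finite and disjoint, the general statement follows by applying it to $(J_0,K_0)$. Thus from now on I would assume $J$ and $K$ finite; the cases $J=\emptyset$ or $K=\emptyset$ are trivial because $V_\emptyset=\{0\}$ and $q(0)=0$.

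Next, I would induct on $n=|J|+|K|$, the base case $n=2$ (so $|J|=|K|=1$) being exactly the hypothesis of disjoint orthogonality in the given decomposition. For the inductive step, assume $n\ge 3$ and without loss of generality $|K|\ge 2$. Pick $k\in K$ and set $K':=K\sm\{k\}$; put
\[
S:=V_{K'},\qquad S':=V_k,\qquad T:=V_J.
\]
Then $q$ is quasilinear on $S\times T$ (i.e.\ on $V_{K'}\times V_J$) by the inductive hypothesis applied to the disjoint pair $(K',J)$ with total size $n-1$; on $S'\times T$ (i.e.\ on $V_k\times V_J$) by the inductive hypothesis applied to $(\{k\},J)$, whose total size is $|J|+1\le n-1$ since $|K|\ge 2$; and on $S\times S'$ (i.e.\ on $V_{K'}\times V_k$) by the inductive hypothesis applied to the disjoint pair $(K',\{k\})$ of total size $|K|\le n-1$ since $|J|\ge 1$. \lemref{lem:I.0.3} now yields quasilinearity of $q$ on $(S+S')\times T=V_K\times V_J$, which is what we wanted.

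The only real subtlety is organising the induction so that all three hypotheses of \lemref{lem:I.0.3} refer to strictly smaller disjoint pairs; this is where the condition $|J|\ge 1$ and $|K|\ge 2$ (after the WLOG) is used, ensuring that the auxiliary pair $(K',\{k\})$ entirely inside $K$ has strictly smaller total cardinality than $(J,K)$. I do not anticipate any genuine obstacle beyond bookkeeping. Once quasilinearity on $V_J\times V_K$ is established, the identity $V=\bigoplus_{i\in I} V_i$ immediately gives $V_{J\cup K}=V_J\oplus V_K$, and combined with the just-proved orthogonality this is exactly $V_{J\cup K}=V_J\perp V_K$.
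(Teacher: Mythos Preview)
Your proof is correct and follows essentially the same approach as the paper: reduce to finite index sets by picking finite supports, then use \lemref{lem:I.0.3} iteratively to build up quasilinearity on $V_J\times V_K$. Your explicit induction on $|J|+|K|$ is just a careful unpacking of what the paper summarizes as ``by iteration.''
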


  \begin{proof}
  It follows from Lemma \ref{lem:I.0.3} above that for any three different indices $i,j,k$
  the form $q$ is quasilinear on $V_i\times (V_j+V_k),$ and thus $V_i$ is orthogonal to $V_j\perp V_k.$
  By iteration, we see that the claim holds if $J$ and $K$ are finite. In the general case, let $x\in V_J$ and $y\in V_K$. There exist finite subsets $J',K'$ of $J$ and $K$ with
$x\in V_{J'},$ $y\in V_{K'},$ and thus $q(x+y)=q(x)+q(y).$ This
proves that $V_J$ is orthogonal to $V_K.$
  \end{proof}

  In the rest of this section, \textbf{we assume that $V$ has unique base}. Then a basic  orthogonal summand~ $W$ of $V$ has only one basic  orthogonal complement, namely, $W^c,$ equipped with the form $q|W^c.$

  \begin{defn}\label{defn:2.4} If the quadratic module $V$ has a basic orthogonal summand $W\ne V$, we call~$V$ \bfem{decomposable}.
  Otherwise we call $V$ \bfem{indecomposable}. More generally, we call a basic submodule $X$ of $V$
  \bfem{decomposable} if $X$ is decomposable with respect to $q|X,$ and otherwise we call $X$ \bfem{indecomposable}.
  \end{defn}

  Our next goal is to decompose the given quadratic module $V$ orthogonally into indecomposable basic submodules. Therefore, we choose a base $\mfB$ of $V$ (unique up to  multiplication by scalar units). We then choose a companion $b$ of $q$
   such that  $b(\veps,\eta)=0$ for any two   \emph{different}
    $\veps,\eta\in\mfB$ such that $q$ is quasilinear on $R\veps\times
    R\eta,$ cf.~\cite[Theorem~6.3]{QF1}.
  %(In particular, $b(\veps,\veps)=0$ for every $\veps\in\mfB.)$
  We call such a companion $b$ a \bfem{quasiminimal companion} of $q.$

  \begin{comment*} In important cases, e.g.,
  if $R$ is supertropical or more generally ``upper bound" (cf.~\cite[Definition 5.8]{QF1}), the set of companions of $q $ can be partially ordered in a natural way. The prefix ``quasi" here is a reminder that we do not mean minimality with respect to  such an ordering.
  \end{comment*}

  \begin{lem}\label{lem:2.5}
  Let $W$ and $W'$ be basic submodules of $V$ with $W\cap W'=\{0\}.$ If $b$ is any quasiminimal companion of $q,$ then $W$ is (disjointly) orthogonal to $W'$ iff $b(W,W')=0.$
  \end{lem}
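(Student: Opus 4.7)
The proof plan proceeds by handling the two directions of the biconditional, where the forward direction is the one that really exploits the hypothesis of quasiminimality.

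The easy direction is $(\Leftarrow)$. Assuming $b(W,W')=0$, I would take arbitrary $x\in W$ and $y\in W'$, and simply apply the companion identity \eqref{eq:I.0.2}:
\[
q(x+y)=q(x)+q(y)+b(x,y)=q(x)+q(y),
\]
giving quasilinearity of $q$ on $W\times W'$. Combined with the hypothesis $W\cap W'=\{0\}$, this says precisely that $W$ is disjointly orthogonal to $W'$.

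For the nontrivial direction $(\Rightarrow)$, the main obstacle is that $R$ is a semiring: from $q(x)+q(y)+b(x,y)=q(x)+q(y)$ we cannot cancel to deduce $b(x,y)=0$. So I would not reason on arbitrary $x\in W$, $y\in W'$, but rather descend to the base and reassemble via bilinearity. Set $\mfB_W=\mfB\cap W$ and $\mfB_{W'}=\mfB\cap W'$. Since $W$ and $W'$ are basic and $W\cap W'=\{0\}$, Scholium~\ref{schol:1.5}(b) gives $\mfB_W\cap\mfB_{W'}=\emptyset$. For any $\veps\in\mfB_W$ and $\eta\in\mfB_{W'}$, the one-dimensional subspaces $R\veps\subseteq W$ and $R\eta\subseteq W'$ satisfy $q(a\veps+c\eta)=q(a\veps)+q(c\eta)$ for all $a,c\in R$, simply because $q$ is quasilinear on all of $W\times W'$. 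Hence $q$ is quasilinear on $R\veps\times R\eta$, and since $\veps\ne\eta$, the defining property of a \emph{quasiminimal} companion forces $b(\veps,\eta)=0$.

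Finally I would extend this vanishing to all of $W\times W'$ by bilinearity. Every $x\in W$ has the form $x=\sum_i\lm_i\veps_i$ with $\veps_i\in\mfB_W$, and every $y\in W'$ has the form $y=\sum_j\mu_j\eta_j$ with $\eta_j\in\mfB_{W'}$, both sums finite. Then
\[
b(x,y)=\sum_{i,j}\lm_i\mu_j\,b(\veps_i,\eta_j)=0,
\]
since each $b(\veps_i,\eta_j)=0$ by the previous step. Thus $b(W,W')=0$, completing the proof. The essence is that quasiminimality converts the (in general uncancellable) quasilinearity statement into actual vanishing of $b$, but only one basis vector at a time; bilinearity then does the rest.
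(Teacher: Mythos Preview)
Your proof is correct and follows essentially the same approach as the paper's own proof: use the companion identity for $(\Leftarrow)$, and for $(\Rightarrow)$ descend to base vectors $\veps\in\mfB_W$, $\eta\in\mfB_{W'}$, invoke quasiminimality to get $b(\veps,\eta)=0$, and extend by bilinearity. You have simply spelled out in more detail what the paper leaves implicit (the disjointness $\veps\ne\eta$ and the bilinear expansion).
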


  \begin{proof}
  If $b(W,W')=0$, then $q(x+y)=q(x)+q(y)$ for any $x\in W$ and $y\in W'$, which means by definition that $W$ is orthogonal to $W'.$ (This holds for any companion $b$ of $q.$)

  Conversely, if $W$ is orthogonal to $W'$, then for base vectors $\veps\in\mfB_W,$ $\eta\in\mfB_{W'}$ the form $q$ is quasilinear on $R\veps\times R\eta$ and thus $b(\veps,\eta)=0.$ This implies that $b(W,W')=0.$\end{proof}

  We now introduce the following equivalence relation on the set $\mfB$. We choose a quasiminimal companion $b$ of $q.$ Given $\veps,\eta\in\mfB$, we put $\veps\sim\eta,$ iff either $\veps=\eta,$ or there exists a sequence $\veps_0,\veps_1,\dots, \veps_r$ in $\mfB,$ $r\ge 1,$ such that $\veps=\veps_0,$ $\eta=\veps_r,$ and $\veps_i \neq \veps_{i+1}$,  $b(\veps_i,\veps_{i+1})\ne0$ for $i=0,\dots,r-1.$

  \begin{thm}\label{thm:2.6} Let $\{\mfB_k\ds|k\in K\}$ denote the set of equivalence classes in $\mfB $
  and, for every $k\in K,$ let $W_k$ denote the submodule of $V$ having base $\mfB _k.$
  \begin{enumerate}
  \item[{\rm(a)}] Then every $W_k$ is an indecomposable basic submodule of $V$ and
  $$V=\bigPerp_{k\in K} W_k.$$ \pSkip

  \item[{\rm(b)}] Every indecomposable basic submodule $U$ of $V$ is contained in $W_k$, for some $k\in K$ uniquely determined by $U.$ \pSkip
    \item[{\rm(c)}]  The modules $W_k,$ $k\in K,$ are precisely all the indecomposable basic orthogonal summands of $V.$
    \end{enumerate}
      \end{thm}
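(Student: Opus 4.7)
The plan is to handle the three parts in order, using Lemma~\ref{lem:2.5} as the main bridge between the combinatorial relation $\sim$ on $\mfB$ and the geometric notion of disjoint orthogonality.

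For part (a), I first note that since the $\mfB_k$ partition $\mfB$, Scholium~\ref{schol:1.5} gives $V = \bigoplus_{k \in K} W_k$ as basic submodules, and the intersection $W_k \cap W_{k'}$ is $\{0\}$ whenever $k \neq k'$. To get pairwise orthogonality, I take $\veps \in \mfB_k$ and $\eta \in \mfB_{k'}$ with $k \neq k'$; by the very definition of $\sim$, the one-step chain $\veps,\eta$ cannot witness equivalence, forcing $b(\veps,\eta) = 0$. Bilinearity extends this to $b(W_k, W_{k'}) = 0$, and Lemma~\ref{lem:2.5} upgrades this to disjoint orthogonality. Hence $V = \bigPerp_{k\in K} W_k$. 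For indecomposability of each $W_k$, I argue by contradiction: suppose $W_k = U \Perp U'$ with $U, U'$ nonzero basic submodules. Lemma~\ref{lem:2.5} (applied inside $W_k$) gives $b(U,U') = 0$. Pick $\veps \in \mfB_U$ and $\eta \in \mfB_{U'}$; both lie in $\mfB_k$, so $\veps \sim \eta$, and I can choose a chain $\veps = \veps_0, \dots, \veps_r = \eta$ in $\mfB_k$ with consecutive values of $b$ nonzero. The chain must cross from $\mfB_U$ to $\mfB_{U'}$ at some step, producing adjacent $\veps_i \in \mfB_U$, $\veps_{i+1} \in \mfB_{U'}$ with $b(\veps_i,\veps_{i+1}) \neq 0$, contradicting $b(U,U') = 0$.

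For part (b), let $U$ be an indecomposable basic submodule. The sets $\mfB_U \cap \mfB_k$ partition $\mfB_U$, so by Scholium~\ref{schol:1.5}, the basic submodules $U_k := U \cap W_k$ satisfy $U = \bigoplus_{k \in K} U_k$. Since each $U_k \subseteq W_k$ and the $W_k$ are pairwise orthogonal in $V$, the $U_k$ are pairwise orthogonal in $U$, giving $U = \bigPerp_{k \in K} U_k$. Indecomposability of $U$ forces exactly one $U_k$ to be nonzero (equal to $U$), yielding a unique $k$ with $U \subseteq W_k$.

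For part (c), the forward direction is part (a). Conversely, let $U$ be an indecomposable basic orthogonal summand of $V$, so $V = U \Perp U^c$ with $U^c$ basic. By (b), $U \subseteq W_k$ for a unique $k$. Using $\mfB = \mfB_U \sqcup \mfB_{U^c}$ together with Scholium~\ref{schol:1.5}(d), the basic submodule $W_k$ decomposes as $W_k = U \oplus (W_k \cap U^c)$. Disjoint orthogonality of $U$ and $U^c$ in $V$ restricts to $U \Perp (W_k \cap U^c)$ inside $W_k$; indecomposability of $W_k$ from (a) then forces $W_k \cap U^c = \{0\}$, i.e., $U = W_k$.

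The only subtle point I expect is the chain-crossing argument in (a): one must be careful that the chain witnessing $\veps \sim \eta$ stays inside $\mfB_k$ (which is automatic since equivalence classes are closed under chains) and that the transition step $\veps_i \in \mfB_U$, $\veps_{i+1} \in \mfB_{U'}$ genuinely produces the desired contradiction with the orthogonality of $U$ and $U'$ inside $W_k$. Everything else is routine bookkeeping with basic submodules via Scholium~\ref{schol:1.5} and Lemma~\ref{lem:2.5}; no finiteness of $K$ is needed, because orthogonality is a pairwise condition and the partition argument in (b) makes sense even for infinite $K$.
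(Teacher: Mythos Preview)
Your proof is correct and follows essentially the same route as the paper's: both use Lemma~\ref{lem:2.5} to pass between $b$-vanishing and disjoint orthogonality, the same chain-crossing contradiction for indecomposability of $W_k$, and the same restriction argument for parts (b) and (c). The only cosmetic difference is that in (b) you decompose $U$ over all of $K$ as $U=\bigPerp_{k\in K}(U\cap W_k)$, whereas the paper picks one $k$ with $\mfB_U\cap\mfB_k\neq\emptyset$ and uses the binary split $U=(U\cap W_k)\perp(U\cap W_k^c)$; both lead to the same conclusion.
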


      \begin{proof} (a): Suppose that $W_k$ has an orthogonal decomposition $W_k=X\perp Y$ with basic submodules $X\ne 0,$ $Y\ne0.$ Then $\mfB_k$ is the disjoint union of the non-empty sets $\mfB_X$ and~ $\mfB_Y.$ Choosing $\veps\in\mfB_X$ and $\eta\in\mfB_Y$, there exists a sequence $\veps_0,\veps_1,\dots,\veps_r$ in $\mfB_k$ with $\veps=\veps_0,$ $\eta=\veps_r$ and $b(\veps_{i-1},\veps_i)\ne0$,   $\veps_{i-1} \neq \veps_{i}$, for $1\le i\le r.$ Let $s$ denote the last index in $\{1,\dots,r\}$ with $\veps_s\in\mfB_X.$ Then $s<r$ and $\veps_{s+1}\in\mfB_Y$. But $b(X,Y)=0$ by \lemref{lem:2.5} and thus $b(\veps_s,\veps_{s+1})=0,$ a contradiction. This proves that $W_k$ is indecomposable. Since $\mfB$ is the disjoint union of the sets $\mfB_k,$ we have
      $$V=\opl_{k\in K} W_k.$$
      Finally, if $k\ne \ell,$ then $b(W_k,W_\ell)=0$ by the nature of our equivalence relation. Thus
      $$V=\BigPerp_{k\in K} W_k.$$ \pSkip

      (b): Given an indecomposable basic submodule $U$ of $V$, we choose $k\in K $ with $\mfB_U\cap\mfB_k\ne \emptyset.$ Then $U\cap W_k\ne0.$ From $V=W_k\oplus W_k^c$, we conclude that $U=(U\cap W_k)\oplus(U\cap W_k^c),$ and then have $U=(U\cap W_k)\perp(U\cap W_k^c)$ because $W_k$ is orthogonal to $W_k^c.$ Since $U$ is indecomposable and $U\cap W_k\ne0,$ it follows that $U=U\cap W_k,$ i.e., $U\subset W_k.$ Since $W_k\cap W_\ell=0$ for $k\ne \ell,$ it is clear that $k$ is uniquely determined by $U.$ \pSkip

      (c): If $U$ is an indecomposable  basic orthogonal summand of $V,$ then $V=U\perp U^c.$ We have $U\subset W_k$ for some $k\in K,$ and obtain $W_k=U\perp (U^c\cap W_k),$ whence $W_k=U.$
      \end{proof}

      \begin{defn}\label{defn:2.6} We call the submodules $W_k$ of $V$ occurring in \thmref{thm:2.6} the \bfem{indecomposable components} of the quadratic module $V.$\end{defn}

      The following facts are easy consequences of the theorem.

      \begin{remark}\label{remarks:2.7}  $ $
      \begin{enumerate} \eroman
        \item
       If $U$ is a basic  orthogonal summand of $V,$ then the indecomposable components of the quadratic module $U=(U,q|U)$ are the indecomposable components of $V$ contained in ~$U.$
\pSkip
     \item  If $U$ is any basic submodule of $V,$ then
      $$U=\BigPerp_{k\in K} \left( U\cap W_k\right),$$
  and every submodule $U\cap W_k\ne\{0\}$ is an orthogonal sum of indecomposable components of $U.$
        \end{enumerate}

  \end{remark}

  \section{Orthogonal decomposition of bilinear modules with unique base}\label{sec:3}

  We now outline a theory of symmetric bilinear forms analogous to the theory for quadratic forms given
  in \S\ref{sec:2}. The bilinear theory is easier than the quadratic theory due the fact that, in contrast to quadratic forms,
    on a free module we do not need to distinguish between ``functional" and ``formal" bilinear forms
   cf.~\cite[\S1]{QF1}. As before, $R$ is a semiring.

  Assume in the following that $V$ is an $R$-module equipped with a fixed symmetric bilinear form $b:V\times V\to R.$ We then call $V=(V,b)$ a \bfem{bilinear $R$-module}.
  If $X$ is a submodule of $V$, we denote the restriction of $b$ to
$X\times X$ by $b|X.$

  \begin{defn}\label{defn:3.1}  $ $
  \begin{enumerate} \ealph
    \item
Given two submodules $W_1,W_2$ of the $R$-module $V$, we say that $W_1$ is \bfem{disjointly orthogonal} to $W_2,$ if $W_1\cap W_2=\{0\}$ and $b(W_1,W_2)=0,$ i.e., $b(x,y)=0$ for all $x\in W_1,$ $y\in W_2.$
\pSkip
 \item We write $V=W_1\perp W_2$ if $W_1$ is disjointly orthogonal to $W_2$ and moreover $V=W_1\oplus W_2$ (as $R$-module). We then call $W_1$ an \bfem{orthogonal summand} of $V$ and $W_2$ an \bfem{orthogonal complement} of $W_1$ in $V.$

  \end{enumerate}
  \end{defn}

\begin{defn}\label{defn:2.2}
Let $(V_i\ds|i\in I)$ be a family of submodules of the bilinear  module $V.$ We say that $V$ is \bfem{the orthogonal sum of the family} $(V_i)$, and then write
$$V= \BigPerp_{i\in I} V_i,$$
if for any two different indices  $i,j$ the submodule $V_i$ is disjointly  orthogonal to $V_j,$ and moreover $V=\opl\limits_{i\in I} V_i.$
\end{defn}

In contrast to the quadratic case, the exact analogue of \propref{prop:2.3} is now a triviality.

\begin{prop}\label{prop:3.3} Assume that $V= \bigPerp\limits_{i\in I} V_i.$ Let $J$ and $K$ be disjoint subsets of $I.$ Then
$V_J=  \bigPerp\limits_{i\in J} V_i $ is disjointly orthogonal to $V_K= \bigPerp\limits_{i\in K} V_i,$ and
$$V_{J\cup K}= V_J\perp V_K.$$
\end{prop}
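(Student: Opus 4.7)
The plan is to imitate the structure of the proof of Proposition~\ref{prop:2.3}, but to observe that bilinearity trivializes the step that required Lemma~\ref{lem:I.0.3} in the quadratic setting. First I would note that $V_J \cap V_K = \{0\}$, since $J$ and $K$ are disjoint and $V = \bigoplus_{i \in I} V_i$; this already shows $V_{J \cup K} = V_J \oplus V_K$ as $R$-modules.

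Next I would verify that $b(V_J, V_K) = 0$. Given $x \in V_J$ and $y \in V_K$, write $x = \sum_{i \in J'} x_i$ and $y = \sum_{j \in K'} y_j$ for finite subsets $J' \subset J$, $K' \subset K$, with $x_i \in V_i$, $y_j \in V_j$. Expanding by bilinearity,
\begin{equation*}
b(x,y) = \sum_{i \in J'} \sum_{j \in K'} b(x_i, y_j).
\end{equation*}
Since $J \cap K = \emptyset$, every pair $(i,j)$ appearing in the sum satisfies $i \neq j$, so by hypothesis $b(x_i, y_j) = 0$. Hence $b(x,y) = 0$, which combined with $V_J \cap V_K = \{0\}$ gives that $V_J$ is disjointly orthogonal to $V_K$, i.e., $V_{J \cup K} = V_J \perp V_K$.

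There is essentially no obstacle here; the contrast with the quadratic case is that $q(x+y)$ does not decompose linearly into contributions $q(x_i + y_j)$, which is precisely why Proposition~\ref{prop:2.3} needed the inductive use of Lemma~\ref{lem:I.0.3}, whereas here bilinearity immediately reduces everything to the pairwise orthogonality hypothesis.
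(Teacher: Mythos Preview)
Your proof is correct and matches the paper's own treatment: the paper does not even write out a proof, remarking only that ``in contrast to the quadratic case, the exact analogue of Proposition~\ref{prop:2.3} is now a triviality.'' Your write-up simply spells out this triviality, and your closing paragraph identifies exactly the point the paper is making --- that bilinearity of $b$ replaces the inductive appeal to Lemma~\ref{lem:I.0.3}.
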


In the following, \textit{we assume again that $V$ has  unique
base}. Then again a  basic  orthogonal summand~$W$ of $V$ has only
one basic orthogonal complement in $V,$ namely, $W^c$ equipped with
the bilinear form $b|W^c.$

For $X$ a basic submodule of $V$, we define the properties \bfem{``decomposable"} and \bfem{``indecomposable"} in exactly the same way as indicated by Definition \ref{defn:2.4} in the quadratic case.

We start with a definition and description of the ``indecomposable
components" of $V=(V,b)$ in a similar fashion as was done in
\S\ref{sec:2} for quadratic modules. We choose a base $\mfB$ of~ $V$
and again introduce  the appropriate equivalence relation on the set
$\mfB,$ but now we adopt a more elaborate terminology than in
\S\ref{sec:2}. This will turn out to be useful later on.

\begin{defn}\label{defn:3.4}
We call the symmetric bilinear form $b$ \bfem{alternate} if $b(\veps,\veps)=0$ for every $\veps\in\mfB.$
\end{defn}

\begin{comment*}
Beware that this does \bfem{not} imply that $b(x,x)=0$ for every
$x\in V.$ The classical notion of an alternating bilinear form is of
no use here since in the semirings under consideration here
(cf.~\S\ref{sec:1}) $\alpha+\beta=0$ implies $\alpha=\beta=0,$
whence $b(x+y,x+y)=0$ implies $b(x,y)=0.$ An alternating bilinear
form in the classical sense would be identically zero.
\end{comment*}

\begin{defn}\label{defn:3.5}
We associate to the given symmetric bilinear form $b$ an alternate bilinear form $b_{\alt}$ by the rule
$$b_{\alt}(\veps,\eta)=\begin{cases} b(\veps,\eta) &\ \text{if }\ \veps\ne \eta\\
0 & \ \text{if }\ \veps= \eta\end{cases}$$
for any $\veps,\eta\in\mfB.$
\end{defn}

\begin{lem}\label{lem:3.6}
Let $W$ and $W'$ be basic submodules of $V$ with $W\cap W'=\{0\}.$ Then $W$ is (disjointly) orthogonal to $W'$ iff $b_{\alt}(W,W')=0.$
\end{lem}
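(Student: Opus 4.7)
The plan is a short bilinearity-plus-disjointness argument. Both $b$ and $b_\alt$ are bilinear, and $W,W'$ are basic, meaning $W$ has base $\mfB_W=\mfB\cap W$ and $W'$ has base $\mfB_{W'}=\mfB\cap W'$. So the statements $b(W,W')=0$ and $b_\alt(W,W')=0$ each reduce, by expanding elements of $W$ and $W'$ in their respective bases, to families of scalar equalities indexed by pairs $(\veps,\eta)\in\mfB_W\times\mfB_{W'}$: namely $b(\veps,\eta)=0$ for all such pairs, respectively $b_\alt(\veps,\eta)=0$ for all such pairs. Thus it suffices to prove that these two families of equalities coincide.

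That is where the hypothesis $W\cap W'=\{0\}$ enters. Since $\mfB_W\cap\mfB_{W'}=\mfB\cap W\cap W'=\mfB\cap\{0\}=\emptyset$ (no base vector is zero), every pair $(\veps,\eta)\in\mfB_W\times\mfB_{W'}$ consists of \emph{different} base vectors. By the very definition of $b_\alt$ (Definition~\ref{defn:3.5}), this forces
\[
b_\alt(\veps,\eta)=b(\veps,\eta)\qquad\text{for all }\veps\in\mfB_W,\,\eta\in\mfB_{W'}.
\]
Hence the two families of conditions are identical, which establishes the biconditional $b(W,W')=0 \Longleftrightarrow b_\alt(W,W')=0$; combined with $W\cap W'=\{0\}$, this is exactly the equivalence asserted by the lemma.

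No serious obstacle arises: the only subtle point to articulate cleanly is that the unique base hypothesis on $V$ is what allows us to speak of \emph{the} base $\mfB$ of $V$ and to identify $\mfB_W,\mfB_{W'}$ as disjoint subsets of $\mfB$; once that is in place, the proof is a one-line calculation on base vectors.
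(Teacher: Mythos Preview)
Your proof is correct and is essentially the same as the paper's: the paper simply refers back to the proof of Lemma~\ref{lem:2.5}, instructing the reader to replace the quasiminimal companion of $q$ by $b_{\alt}$, which amounts to exactly the reduction to base vectors and the observation that $b$ and $b_{\alt}$ agree on pairs $(\veps,\eta)$ with $\veps\neq\eta$. Your write-up is in fact a bit more self-contained than the paper's pointer.
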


\begin{proof}
This can be seen  exactly   as with the parallel \lemref{lem:2.5}.
Just replace in its proof   the quasiminimal companion of $q$  by $b_{\alt}.$
\end{proof}

\begin{defn}\label{defn:3.7} $ $
\begin{enumerate} \ealph
  \item
A \bfem{path} $\Gm$ in $V=(V,b)$ of \bfem{length} $r\ge1$ in $\mfB$ is a sequence $\veps_0,\veps_1,\dots,\veps_r$ of elements of $\mfB$ with
$$b_{\alt}(\veps_i,\veps_{i+1})\ne0\qquad (0\le i\le r-1).$$
In essence  this condition does not depend on the choice of the base $\mfB$, since $\mfB$ is unique up to multiplication  by units, and so we also say that \textbf{$\Gm$ is a path in $V$}.
We say that the path runs from $\veps:=\veps_0$ to $\eta:=\veps_r,$ or that the path connects $\veps$ to $\eta.$ A path of length $1$ is called an \bfem{edge}. This is just a pair $(\veps,\eta)$ in $\mfB$ with $\veps \neq \eta$ and $b(\veps,\eta)\ne0.$
\pSkip
 \item We define an equivalence   relation on $\mfB$ as follows. Given $\veps,\eta\in\mfB$, we declare that $\veps\sim\eta$ if either $\veps=\eta$ or there runs a path from $\veps$ to $\eta.$
\end{enumerate}

\end{defn}

It is now obvious how to mimic the theory of indecomposable
components from the end of \S\ref{sec:2} in the bilinear setting.

\begin{schol}\label{schol:3.8}
\thmref{thm:2.6} and its proof remain valid for the present equivalence relation on $\mfB.$
 We only have to replace the quasiminimal companion $b$ of $q$ there by $b_{\alt}$ and to use \lemref{lem:3.6}
 instead of \lemref{lem:2.5}. Again we denote the set of equivalence classes of $\mfB$ by $\{\mfB_k\ds|k\in K\}$ and the submodule of $V$ with base $\mfB_k$ by $V_k$, and again
 we call the $V_k$ the \bfem{indecomposable components} of $V.$ Also the
analog to Remark \ref{remarks:2.7} remains valid.
\end{schol}

We state a consequence of the parallel between the two decomposition
theories.

\begin{thm}\label{thm:3.9}
Assume that $(V,q)$ is a quadratic module with unique base and
$b$ is a quasiminimal companion of $q.$ The indecomposable
components of $(V,q)$ coincide with the indecomposable components of
$(V,b).$
\end{thm}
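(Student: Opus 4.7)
The plan is to unwind both decomposition recipes onto a common fixed base $\mfB$ of $V$ and verify that the two equivalence relations on $\mfB$, one coming from $(V,q)$ and the other from $(V,b)$, are literally the same relation. Once this is established, Theorem~\ref{thm:2.6} (in its quadratic form) and its bilinear counterpart (Scholium~\ref{schol:3.8}) produce the indecomposable components as the spans of the equivalence classes, so the two families of components coincide.

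First I would recall precisely the two equivalence relations. For $(V,q)$ with the chosen quasiminimal companion $b$, the relation $\sim_q$ on $\mfB$ is generated by declaring $\veps\sim_q\eta$ when there is a sequence $\veps_0,\dots,\veps_r$ in $\mfB$ with $\veps_0=\veps$, $\veps_r=\eta$, $\veps_i\neq\veps_{i+1}$, and $b(\veps_i,\veps_{i+1})\neq 0$. For $(V,b)$, the relation $\sim_b$ is generated by paths $\veps_0,\dots,\veps_r$ in $\mfB$ satisfying $b_\alt(\veps_i,\veps_{i+1})\neq 0$.

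The heart of the argument is the observation that these two generating conditions are equivalent link-by-link. By Definition~\ref{defn:3.5}, $b_\alt(\veps,\eta)=b(\veps,\eta)$ whenever $\veps\neq\eta$, while $b_\alt(\veps,\veps)=0$. Hence the hypothesis $b_\alt(\veps_i,\veps_{i+1})\neq 0$ already forces $\veps_i\neq\veps_{i+1}$ and then reduces to $b(\veps_i,\veps_{i+1})\neq 0$; conversely the two conditions $\veps_i\neq\veps_{i+1}$ and $b(\veps_i,\veps_{i+1})\neq 0$ together deliver $b_\alt(\veps_i,\veps_{i+1})\neq 0$. Thus the two relations generate the same partition $\{\mfB_k\ds|k\in K\}$ of $\mfB$.

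I do not anticipate any real obstacle here, since the only substantive input is the bookkeeping of Definitions~\ref{defn:3.5} and~\ref{defn:3.7} against the description of $\sim_q$ preceding Theorem~\ref{thm:2.6}. The one point to mention is that this identification requires that we use \emph{a} quasiminimal companion $b$ for the quadratic side (as Theorem~\ref{thm:3.9} prescribes): without quasi\-minimality one could have $b(\veps,\eta)\neq 0$ for some $\veps\neq \eta$ even when $q$ is quasilinear on $R\veps\times R\eta$, which would create spurious edges in $\sim_q$ not seen by $\sim_b$. Once the two relations are identified, Theorem~\ref{thm:2.6}(a) exhibits the submodule $W_k$ spanned by $\mfB_k$ as the $k$th indecomposable component of $(V,q)$, and Scholium~\ref{schol:3.8} identifies the same $W_k$ as the $k$th indecomposable component of $(V,b)$, completing the proof.
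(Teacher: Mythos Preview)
Your argument is correct and follows exactly the paper's approach: the paper's proof is the single sentence that the equivalence relation used in Theorem~\ref{thm:2.6} is the same as that in Definition~\ref{defn:3.7}, and you have simply spelled out why, via the identity $b_{\alt}(\veps,\eta)=b(\veps,\eta)$ for $\veps\neq\eta$. Your aside about quasiminimality is slightly garbled (since both $\sim_q$ and $\sim_b$ are built from the \emph{same} $b$, extra edges would appear in both simultaneously; the real point is that for non-quasiminimal $b$ the construction preceding Theorem~\ref{thm:2.6} is not applicable), but this does not affect the proof.
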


\begin{proof}
The equivalence relation used in \thmref{thm:2.6} is the same as the equivalence relation in Definition \ref{defn:3.7}. %(Recall that $b$ is alternate.)
\end{proof}

We add an easy observation on bilinear modules.

\begin{prop}\label{prop:3.10} Assume that $(V,b)$ is a
bilinear $R$-module with unique base. A basic submodule $W$ of $V$
is indecomposable with respect to $b$, iff $W$ is indecomposable
with respect to $b_{\alt}.$
\end{prop}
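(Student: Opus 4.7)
The plan is to reduce both notions of indecomposability to the very same combinatorial condition on the base, namely path-connectedness of $\mfB_W$ under the equivalence relation of Definition~\ref{defn:3.7}, and then invoke the key fact that the construction $b \mapsto b_{\alt}$ is idempotent.

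First, I would observe that by construction $(b_{\alt})_{\alt} = b_{\alt}$, since $b_{\alt}(\veps,\veps) = 0$ already for every $\veps \in \mfB$. Consequently, for any two base elements $\veps,\eta \in \mfB$, the condition ``$b_{\alt}(\veps,\eta) \ne 0$'' that defines an edge in $V$ according to Definition~\ref{defn:3.7}(a) is literally the same whether one regards it as an edge for $(V,b)$ or for $(V,b_{\alt})$. Hence the equivalence relation on $\mfB$ from Definition~\ref{defn:3.7}(b) attached to the bilinear module $(V,b)$ coincides with the equivalence relation attached to $(V,b_{\alt})$.

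Next I would apply Scholium~\ref{schol:3.8} in both settings: the indecomposable components of $(V,b)$ and of $(V,b_{\alt})$ are precisely the basic submodules $V_k$ spanned by the equivalence classes $\mfB_k$ of this common equivalence relation. Therefore the two modules $(V,b)$ and $(V,b_{\alt})$ have identical indecomposable components.

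Finally, for the statement about a basic submodule $W \subseteq V$: I would apply the preceding reasoning to $(W, b|W)$, whose unique base is $\mfB_W = \mfB \cap W$, and note that the restricted form $(b|W)_{\alt}$ agrees with the restriction $b_{\alt}|W$ on $\mfB_W \times \mfB_W$. By Remark~\ref{remarks:2.7}(ii) (in the bilinear version provided by Scholium~\ref{schol:3.8}), $W$ is indecomposable with respect to $b$ iff $\mfB_W$ forms a single equivalence class of the path relation, and by the same argument applied to $b_{\alt}$, this is equivalent to $W$ being indecomposable with respect to $b_{\alt}$. I do not anticipate any real obstacle here; the entire content is the tautology $(b_{\alt})_{\alt} = b_{\alt}$, combined with the fact that the decomposition theory of Section~\ref{sec:3} only ever sees $b$ through its alternate companion $b_{\alt}$.
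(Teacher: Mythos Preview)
Your proposal is correct and follows essentially the same approach as the paper: both arguments rest on the single observation that the equivalence relation of Definition~\ref{defn:3.7} is defined through $b_{\alt}$ and hence is unchanged when $b$ is replaced by $b_{\alt}$ (equivalently, $(b_{\alt})_{\alt}=b_{\alt}$). The paper's proof is in fact just that one sentence; your version spells out the passage to a basic submodule $W$ via $(b|W)_{\alt}=b_{\alt}|W$, which is a harmless elaboration, though your appeal to Remark~\ref{remarks:2.7}(ii) is not really needed --- applying Scholium~\ref{schol:3.8} directly to $(W,b|W)$ already gives that $W$ is indecomposable iff $\mfB_W$ is a single equivalence class.
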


\begin{proof} The equivalence relation on $\mfB$ just defined (Definition \ref{defn:3.7}) does not change if we replace $b$ by $b_{\alt}.$
\end{proof}
\section{Isometries, isotypical components, and a cancellation theorem}\label{sec:4}

Let $R$ be any semiring.

\begin{defn}\label{defn:4.1}  $ $
\begin{enumerate} \ealph
 \item For quadratic $R$-modules $V=(V,q)$ and $V'=(V',q')$,  an \bfem{isometry} $\sig: V\to V'$ is a
 bijective $R$-linear map with $q'(\sig x)=q(x)$ for all $x\in V.$ Likewise, if $V=(V,b)$ and $(V',b')$ are bilinear
 $R$-modules, an \bfem{isometry} is a bijective $R$-linear map $\sig: V\to V'$ with $b'(\sig x,\sig y)=b(x,y)$ for all $x,y\in V.$ \pSkip

 \item If there exists an isometry $\sig: V\to V'$, we call $V$ and $V'$ \bfem{isometric} and write $V\cong V'.$ We then also say that $V$ and $V'$ are \textit{in the same isometry class}.
     \end{enumerate}
 \end{defn}

In the following we study     quadratic and bilinear $R$-modules
with unique base on an equal footing.

It would  not hurt if we supposed that the semiring $R$ satisfies
the conditions in \thmref{thm:1.7}, so that every free $R$-module
has unique base, but the simplicity of all of the arguments in the
present section becomes more apparent if we do not rely on
\thmref{thm:1.7}.

\begin{notationdefinition}\label{notationdefinition:4.2} $ $
\begin{enumerate} \ealph
  \item
 Let $(V_\lm^0\ds|\lm\in \Lm)$ be a set of representatives of all isometry classes of indecomposable
 quadratic (resp. bilinear) $R$-modules with unique base \footnote{of rank bounded by the cardinality of $V$,
 in order to avoid set-theoretical complications}.
\pSkip

\item If $W$ is such an $R$-module, where $W\cong V_\lm^0$ for a unique $\lm\in\Lm$, we say that $W$ \bfem{has type} $\lm$ (or: $W$ is \bfem{indecomposable of type} $\lm).$
\pSkip

 \item We say that a quadratic (resp. bilinear) module $W\ne0$ with unique base is \bfem{isotypical} of type $\lm,$ if every indecomposable component  of $V$ has type $\lm.$
\pSkip

 \item Finally, given a quadratic (resp. bilinear) $R$-module with unique base, we denote the sum of all indecomposable components of $V$ of type $\lm$ by $V_\lm$ and call the $V_\lm\ne0$ the \bfem{isotypical components of} $V$.
\end{enumerate}
\end{notationdefinition}

The following is now obvious from \S\ref{sec:2} and \S\ref{sec:3}
(cf.~\thmref{thm:2.6} and Scholium \ref{schol:3.8}).

\begin{prop}\label{prop:4.3}
If $V$ is a quadratic or bilinear $R$-module with unique base, then
    $$V=\BigPerp_{\lm\in \Lm'} V_\lm $$
    with $\Lm'=\{\lm\in\Lm\ds|V_\lm\ne0\}.$
    \end{prop}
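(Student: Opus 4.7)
The plan is to reduce the statement to the canonical orthogonal decomposition into indecomposable basic submodules that has already been established. In the quadratic case I would invoke Theorem~\ref{thm:2.6}, and in the bilinear case its analogue (Scholium~\ref{schol:3.8}), to write
$$V \;=\; \BigPerp_{k \in K} W_k,$$
where $(W_k \ds | k \in K)$ are the indecomposable basic components of $V$. Each $W_k$ is an indecomposable quadratic (resp.\ bilinear) $R$-module with unique base, hence belongs to a unique isometry class, so it has a well-defined type $\lm(k)\in\Lm$ in the sense of Notation/Definition~\ref{notationdefinition:4.2}.

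Next I would group the components by type. Setting $K_\lm := \{k\in K \ds: \lm(k)=\lm\}$ partitions $K$ as $K=\bigsqcup_{\lm\in\Lm'} K_\lm$, where by construction $\Lm'=\{\lm\in\Lm \ds : K_\lm\neq\emptyset\}$. By Notation/Definition~\ref{notationdefinition:4.2}(d) the isotypical component $V_\lm$ is precisely $\sum_{k\in K_\lm} W_k$. Applying Proposition~\ref{prop:2.3} (respectively its bilinear twin, Proposition~\ref{prop:3.3}) to the subset $K_\lm\subset K$ gives
$$V_\lm \;=\; \BigPerp_{k\in K_\lm} W_k,$$
so $V_\lm$ is itself a basic submodule with unique base, and it is nonzero exactly when $K_\lm$ is nonempty. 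Thus $\Lm'$ coincides with $\{\lm\in\Lm \ds : V_\lm\neq 0\}$, matching the statement.

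Finally, I would derive both the orthogonality of distinct isotypical components and the direct sum identity from the same proposition. For $\lm\neq\mu$ in $\Lm'$ the index sets $K_\lm$ and $K_\mu$ are disjoint, so Proposition~\ref{prop:2.3} (resp.~\ref{prop:3.3}) shows that $V_\lm$ is disjointly orthogonal to $V_\mu$. Since $K=\bigsqcup_{\lm\in\Lm'}K_\lm$ and $V=\bigoplus_{k\in K}W_k$, collecting summands type by type yields $V=\bigoplus_{\lm\in\Lm'} V_\lm$, and together with pairwise orthogonality this gives the desired orthogonal decomposition $V=\bigPerp_{\lm\in\Lm'} V_\lm$. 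I do not expect a real obstacle here: the only point that deserves care is that the ``type'' of each indecomposable component is an intrinsic invariant of $W_k$ and not of the chosen base $\mfB$, which is immediate from the definition of isometry and from the fact that the decomposition in Theorem~\ref{thm:2.6}/Scholium~\ref{schol:3.8} is canonical.
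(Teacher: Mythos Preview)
Your proposal is correct and follows exactly the route the paper intends: the paper simply declares the proposition ``obvious from \S\ref{sec:2} and \S\ref{sec:3} (cf.~\thmref{thm:2.6} and Scholium~\ref{schol:3.8})'', and what you have written is a careful unpacking of that remark, grouping the indecomposable components $W_k$ by type and invoking Propositions~\ref{prop:2.3}/\ref{prop:3.3} for the required orthogonality.
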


    Since our notion of orthogonality for basic submodules of $V$ is encoded in the linear and quadratic,
    resp.~bilinear, structure of $V,$ the following fact  also is obvious, but in view of its importance will be dubbed a ``theorem".

\begin{thm}\label{thm:4.4}
Assume that $V$ and $V'$ are quadratic (resp. bilinear) $R$-modules
with unique bases and $\sig: V\to V'$ is an isometry. Let
$\{V_k\ds|k\in K\}$ denote the set of indecomposable components of
$V.$
\begin{enumerate}\ealph
\item $\{\sig(V_k)\ds|k\in K\}$ is the set of indecomposable components of $V'.$ \pSkip
\item If $V_k$ has type $\lm$, then $\sig(V_k)$ has type $\lm,$ and so $\sig(V_\lm)=V_\lm'$ for every $\lm\in\Lm.$
   \end{enumerate}
      \end{thm}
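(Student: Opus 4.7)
The plan is to show that the isometry $\sigma$ preserves every structural ingredient that went into the definition of the indecomposable components (in Theorem~\ref{thm:2.6} and Scholium~\ref{schol:3.8}): bases, basic submodules, and disjoint orthogonality. Once this is done, both (a) and (b) fall out essentially automatically.

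First, I would verify that $\sigma$ sends bases to bases. If $\mfB$ is the (projectively unique) base of $V$, then $\sigma(\mfB)$ is a base of $V'$ because $\sigma$ is an $R$-linear bijection: every $x' \in V'$ has a unique representation as $\sigma$ applied to the unique $\mfB$-expansion of $\sigma^{-1}(x')$. By the unique-base hypothesis on $V'$, the set $\sigma(\mfB)$ differs from the fixed base $\mfB'$ of $V'$ only by multiplication with scalar units. Consequently, $\sigma$ carries basic submodules of $V$ bijectively to basic submodules of $V'$, and $\sigma(W^c) = \sigma(W)^c$ for any basic $W$.

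Next, since $\sigma$ is an isometry, it preserves the ambient quadratic form $q$ (resp.\ bilinear form $b$), so it preserves the defining property of disjoint orthogonality: $q$ is quasilinear on $W_1 \times W_2$ iff $q'$ is quasilinear on $\sigma(W_1) \times \sigma(W_2)$ (and analogously for bilinear modules). In particular, $\sigma$ transports orthogonal decompositions of basic submodules to orthogonal decompositions of their images, so $\sigma(W)$ is indecomposable iff $W$ is. Applying this to the decomposition
\[
V \;=\; \BigPerp_{k \in K} V_k
\]
of Theorem~\ref{thm:2.6}(a) (resp.\ Scholium~\ref{schol:3.8}) produces an orthogonal decomposition $V' = \bigPerp_{k \in K} \sigma(V_k)$ into indecomposable basic submodules of $V'$. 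By the uniqueness statement in Theorem~\ref{thm:2.6}(c), the family $\{\sigma(V_k) : k \in K\}$ must coincide with the set of indecomposable components of $V'$, proving (a).

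For (b), the restriction $\sigma|_{V_k} : V_k \to \sigma(V_k)$ is again an isometry (it is $R$-linear, bijective, and preserves $q$ or $b$), so $V_k$ and $\sigma(V_k)$ lie in the same isometry class; hence they carry the same type $\lm$. Grouping the indecomposable components by type and summing yields $\sigma(V_\lm) = V'_\lm$ for every $\lm \in \Lm$. The only point that requires any care, and which I would regard as the sole real step, is the verification that $\sigma$ sends basic submodules to basic submodules; this is precisely where the unique-base assumption on both $V$ and $V'$ is used.
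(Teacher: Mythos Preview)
Your argument is correct and is precisely the detailed unfolding of what the paper leaves implicit: the paper gives no proof of Theorem~\ref{thm:4.4} at all, merely remarking beforehand that the result is ``obvious'' because ``our notion of orthogonality for basic submodules of $V$ is encoded in the linear and quadratic, resp.\ bilinear, structure of $V$.'' Your proposal makes this explicit by checking that an isometry transports bases, basic submodules, and disjoint orthogonality, and then invokes Theorem~\ref{thm:2.6}(c); this is exactly the intended route, just written out in full.
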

    Also in the remainder of the section, we assume that the quadratic or bilinear modules have unique base.

    \begin{defn}\label{defn:4.5}
    Let $O(V)$ denote the group of all isometries $\sig:V\to V$ (i.e., automorphisms) of $(V,q)$, resp. $(V,b).$ As usual, we call $O(V)$ the \bfem{orthogonal group} of $V.$
    \end{defn}

    \thmref{thm:4.4} has the following immediate consequence.

    \begin{cor}\label{cor:4.6}
    Every $\sig\in O(V)$ permutes the indecomposable components of $V$ of fixed type~$\lm,$ and so $\sig(V_\lm)=V_\lm$ for every $\lm\in\Lm$.

    We have a natural isomorphism
 $$\xymatrix{\qquad \quad
O(V)\ar[r]^{1:1}  & \prod\limits_{\lm\in\Lm'}  O(V_\lm),}  $$
    sending $\sig\in O(V)$ to the family of its restrictions $\sig|V_\lm\in O(V_\lm).$
    \end{cor}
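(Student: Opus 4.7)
The plan is to deduce both assertions directly from Theorem~\ref{thm:4.4} applied in the case $V' = V$. Any $\sig \in O(V)$ sends the set of indecomposable components of $V$ bijectively to itself by part (a), and by part (b) this bijection preserves types. Hence $\sig$ permutes the indecomposable components of each fixed type $\lm$, and since $V_\lm$ is by definition the orthogonal sum of precisely those components, we get $\sig(V_\lm) = V_\lm$. This proves the first assertion and shows that the restriction $\sig|V_\lm$ is an $R$-linear automorphism of $V_\lm$ preserving $q|V_\lm$ (resp.\ $b|V_\lm$), i.e.\ an element of $O(V_\lm)$.

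Next, I would check that the assignment $\Phi: \sig \mapsto (\sig|V_\lm)_{\lm \in \Lm'}$ is a group isomorphism $O(V) \to \prod_{\lm \in \Lm'} O(V_\lm)$. It is evidently a group homomorphism. Injectivity is immediate from Proposition~\ref{prop:4.3}: every $x \in V$ has a unique decomposition $x = \sum_\lm x_\lm$ with $x_\lm \in V_\lm$ and only finitely many $x_\lm$ nonzero, so $\sig$ is determined by its restrictions to the $V_\lm$ via $R$-linearity. For surjectivity, given a family $(\tau_\lm)_{\lm \in \Lm'}$ with $\tau_\lm \in O(V_\lm)$, I define $\sig: V \to V$ by $\sig(\sum_\lm x_\lm) := \sum_\lm \tau_\lm(x_\lm)$; this is a well-defined $R$-linear bijection, with inverse $\sum x_\lm \mapsto \sum \tau_\lm^{-1}(x_\lm)$.

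The remaining, and main (though shallow) obstacle is verifying that $\sig$ preserves the form; this is where the orthogonal-sum structure of $V = \bigPerp_{\lm \in \Lm'} V_\lm$ is essential. In the bilinear case, the orthogonality $b(V_\lm, V_\mu) = 0$ for $\lm \ne \mu$ together with bilinearity yields
\[ b(\sig x, \sig y) = \sum_\lm b(\tau_\lm x_\lm, \tau_\lm y_\lm) = \sum_\lm b(x_\lm, y_\lm) = b(x, y). \]
In the quadratic case, a straightforward induction based on Proposition~\ref{prop:2.3} gives the additivity $q(\sum_\lm x_\lm) = \sum_\lm q(x_\lm)$ on the (finite) support of $x$, and since each $\tau_\lm$ preserves $q|V_\lm$ we conclude $q(\sig x) = \sum_\lm q(\tau_\lm x_\lm) = \sum_\lm q(x_\lm) = q(x)$. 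Thus $\sig \in O(V)$ with $\Phi(\sig) = (\tau_\lm)$, establishing that $\Phi$ is an isomorphism of groups.
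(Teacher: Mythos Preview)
Your proof is correct and follows the same route as the paper, which simply declares the corollary an immediate consequence of Theorem~\ref{thm:4.4} without spelling out any details. You have supplied the routine verifications (injectivity, surjectivity, and form-preservation of $\Phi$) that the paper leaves implicit; in particular your use of Proposition~\ref{prop:2.3} to justify $q(\sum_\lm x_\lm)=\sum_\lm q(x_\lm)$ is exactly the right ingredient in the quadratic case.
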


    \begin{defn}\label{defn:4.7} $ $ \begin{enumerate}\ealph
\item  Let $\lm\in\Lm$. We denote the cardinality of the set of indecomposable
components of $V_\lm$ by $m_\lm(V),$ and we call $m_\lm(V)$ the
\bfem{multiplicity} of $V_\lm.$ \{N.B. $m_\lm(V)$ can be infinite or
zero.\} \pSkip

 \item If $m_\lm\in\mathbb N_0$ for every $\lm\in\Lm,$ we say that $V$ is
 \bfem{isotypically finite}. \end{enumerate}\end{defn}

\begin{thm}\label{thm:4.8}
If $V$ and $V'$ are quadratic or bilinear $R$-modules with unique
bases, then $V\cong V'$ iff $m_\lm(V)=m_\lm(V')$ for every
$\lm\in\Lm.$\end{thm}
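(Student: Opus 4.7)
The plan is to reduce both directions to a careful bookkeeping with the orthogonal decomposition
$$V \;=\; \BigPerp_{\lambda \in \Lm'} V_\lambda \;=\; \BigPerp_{\lambda \in \Lm'} \, \BigPerp_{k \in K_\lambda(V)} V_k,$$
guaranteed by Proposition~\ref{prop:4.3} and the definition of the isotypical components, where the inner sum runs over the indecomposable components of type $\lambda$ so $|K_\lambda(V)| = m_\lambda(V)$, and each $V_k \cong V_\lambda^0$. We have the analogous decomposition for $V'$.

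For the implication $V \cong V' \Rightarrow m_\lambda(V) = m_\lambda(V')$, given an isometry $\sigma : V \to V'$, Theorem~\ref{thm:4.4}(a) says $\sigma$ maps the set of indecomposable components of $V$ bijectively onto the set of indecomposable components of $V'$, and part~(b) says this bijection preserves types. Restricting to type $\lambda$ yields a bijection between the indexing sets $K_\lambda(V)$ and $K_\lambda(V')$, so the cardinalities agree.

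For the converse, assume $m_\lambda(V) = m_\lambda(V')$ for every $\lambda \in \Lm$. For each $\lambda$ choose a bijection $\varphi_\lambda : K_\lambda(V) \to K_\lambda(V')$, and for each $k \in K_\lambda(V)$ choose an isometry $\sigma_k : V_k \isoto V'_{\varphi_\lambda(k)}$, which exists since both modules are indecomposable of type $\lambda$ and hence both isometric to the chosen representative $V_\lambda^0$. Let $\varphi : K \to K'$ be the induced bijection on the combined index sets, and define $\sigma : V \to V'$ on the direct-sum decomposition by $\sigma\bigl(\sum_k x_k\bigr) := \sum_k \sigma_k(x_k)$, extended $R$-linearly using only finitely many nonzero terms. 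This map is a bijective $R$-linear map since it is built from $R$-linear bijections on the summands of a direct-sum decomposition.

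It remains to check that $\sigma$ preserves the form. In the quadratic case, given $x = \sum_k x_k \in V$ with $x_k \in V_k$ almost all zero, iterated application of Proposition~\ref{prop:2.3} (which lets us collapse the finitely many nonzero indices one at a time) yields $q(x) = \sum_k q(x_k)$, and symmetrically $q'(\sigma(x)) = \sum_k q'(\sigma_k(x_k))$ using the orthogonal decomposition of $V'$. Since each $\sigma_k$ is an isometry, $q'(\sigma_k(x_k)) = q(x_k)$, so $q'(\sigma(x)) = q(x)$. The bilinear case is identical, using Proposition~\ref{prop:3.3} in place of Proposition~\ref{prop:2.3}; since here orthogonality is literally $b(W,W') = 0$, the required identity $b'(\sigma x, \sigma y) = b(x,y)$ for $x,y$ in finitely many summands reduces immediately to the summandwise identities. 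The main (and only nontrivial) obstacle is this passage from possibly infinite orthogonal decompositions to the form equation, but Propositions~\ref{prop:2.3} and~\ref{prop:3.3} reduce it to the finite case since each vector involves only finitely many nonzero components.
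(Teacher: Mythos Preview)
Your proof is correct and follows essentially the same approach as the paper, which simply says ``This follows from \propref{prop:4.3} and \thmref{thm:4.4}.'' You have spelled out in detail what the paper leaves implicit: the forward direction via Theorem~\ref{thm:4.4}, and the converse by assembling summandwise isometries and verifying form-preservation via Propositions~\ref{prop:2.3} and~\ref{prop:3.3}.
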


\begin{proof}
This follows from \propref{prop:4.3} and \thmref{thm:4.4}.
\end{proof}

We are ready for a main result of the paper.

\begin{thm}\label{thm:4.9}
Assume that
$W_1, W_2, W'_1, W'_2$  are quadratic or bilinear modules with unique base and that
$W_1$ is isotypically finite. Assume furthermore that
$W_1\cong W'_1$ and that $W_1\perp W_2 \cong W_1'\perp W_2'$. Then $W_2\cong W_2'.$

\end{thm}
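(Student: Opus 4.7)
The plan is to reduce the cancellation statement to an equality of cardinal multiplicities for each indecomposable type, and then cancel termwise. Here is how I would carry it out.

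First I would observe that multiplicities add under orthogonal sum: if $W$ and $W'$ are modules with unique base (quadratic or bilinear), then
\[
   m_\lambda(W\perp W') \;=\; m_\lambda(W)+m_\lambda(W')
\]
for every $\lambda\in\Lm$. This is the crucial combinatorial input. Its justification is immediate from the construction of indecomposable components in Theorem \ref{thm:2.6} (respectively Scholium \ref{schol:3.8}): choosing bases $\mfB$ of $W$ and $\mfB'$ of $W'$, the set $\mfB\cup\mfB'$ is a base of $W\perp W'$; since $W$ and $W'$ are (disjointly) orthogonal, Lemma \ref{lem:2.5} (respectively Lemma \ref{lem:3.6}) shows that no edge of the graph used to define the equivalence relation on $\mfB\cup\mfB'$ crosses between $\mfB$ and $\mfB'$. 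Hence the equivalence classes in $\mfB\cup\mfB'$ are exactly the equivalence classes in $\mfB$ together with those in $\mfB'$, and the indecomposable components of $W\perp W'$ are exactly those of $W$ together with those of $W'$. Sorting by type gives the displayed additivity.

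Next I would apply this additivity to the two given orthogonal sums. From $W_1\perp W_2\cong W_1'\perp W_2'$, Theorem \ref{thm:4.8} gives
\[
   m_\lambda(W_1\perp W_2)=m_\lambda(W_1'\perp W_2')
\]
for every $\lambda\in\Lm$, and by the additivity step this rewrites as
\[
   m_\lambda(W_1)+m_\lambda(W_2) \;=\; m_\lambda(W_1')+m_\lambda(W_2').
\]
Since $W_1\cong W_1'$, Theorem \ref{thm:4.8} also gives $m_\lambda(W_1)=m_\lambda(W_1')$ for every $\lambda$.

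Finally I would invoke the hypothesis that $W_1$ is isotypically finite. This means $m_\lambda(W_1)\in\mathbb N_0$ for every $\lm,$ so the common term $m_\lambda(W_1)=m_\lambda(W_1')$ is a finite natural number and can be subtracted from both sides of the previous equation (even when $m_\lambda(W_2)$ or $m_\lambda(W_2')$ is an infinite cardinal). We conclude $m_\lambda(W_2)=m_\lambda(W_2')$ for every $\lm\in\Lm$, and then Theorem \ref{thm:4.8} yields $W_2\cong W_2'$, as required.

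The only delicate point is the additivity $m_\lambda(W\perp W')=m_\lambda(W)+m_\lambda(W')$, which relies on the fact that disjoint orthogonality is detected by the bilinear form $b_{\alt}$ (or by a quasiminimal companion of $q$) at the level of base vectors, so that basic orthogonality forbids any path crossing from $\mfB$ to $\mfB'$. The rest is a purely bookkeeping cancellation in $\mathbb N_0\cup\{\text{cardinals}\}$, legitimised by the finiteness assumption on $W_1$.
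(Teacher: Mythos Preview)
Your proof is correct and follows essentially the same route as the paper: reduce to multiplicities via Theorem~\ref{thm:4.8}, use additivity $m_\lambda(W_1\perp W_2)=m_\lambda(W_1)+m_\lambda(W_2)$, and cancel the finite term $m_\lambda(W_1)=m_\lambda(W_1')$. The only difference is that the paper asserts the additivity step as ``clearly'' without further comment, whereas you spell out why no equivalence class in the base of $W_1\perp W_2$ can cross between the two summands; this is a harmless (and helpful) elaboration rather than a different argument.
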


\begin{proof}
For every $\lm\in\Lm$, clearly
 $m_\lm(V)=m_\lm(W_1)+m_\lm(W_2)$ and  $m_\lm(V')=m_\lm(W_1')+m_\lm(W_2')$.
 Since $V\cong V',$ the multiplicities $m_\lm(V)$ and $m_\lm(V')$ are equal, and since $W_1\cong W_1',$ the same holds for the multiplicities $m_\lm(W_1').$ Since $m_\lm(W_1)=m_\lm(W_1')$ is finite, it follows that $m_\lm(W_2)=m_\lm(W_2').$ By \thmref{thm:4.8} this implies that $W_2\cong W_2'.$
 \end{proof}

 \begin{remark}\label{remark:4.10}
If the free $R$-module $W_1$ has finite rank, then certainly $W_1$
is  isotypically finite. Thus Theorem \ref{thm:4.9} may be viewed as
the analogue of Witt's cancellation theorem from 1937 \cite{Witt}
proved for quadratic forms over fields.
\end{remark}

The assumption of isotypical finiteness in Theorem \ref{thm:4.9} cannot be relaxed. Indeed if $m_\lm(W_1)$ is infinite for at least one $\lm \in \Lm$, then the cancellation law becomes false.
This is evident by Theorem \ref{thm:4.8} and the following example.
 \begin{examp}\label{examp:4.11}
 Assume that $V$ is the orthogonal sum of infinitely many copies $V_1,V_2,\dots$ of an indecomposable quadratic or bilinear module $V_0$ with unique base. Consider the following submodules of~$V$:
 $$
 \begin{array}{lll}
   W_1:=&V_2\perp V_3\Perp\cdots,\qquad W_2:=V_1,\\[1mm]
  W_1':=&V_3\perp V_4\Perp \cdots,\qquad W_2':=V_1 \Perp V_2. \end{array}$$
  Then $W_1 \Perp W_2=V=W_1'\Perp W_2',$ and $W_1\cong W_1'.$ But $W_2$ is not isometric to $W_2'.$
  \end{examp}

\section{Expansions and tensor products}\label{sec:5}

Let $q:V\to R$ be a quadratic form on an $R$-module $V$. We recall from \cite[\S1]{QF1}
 that, when $V$ is free with base $(\veps_i \ds : i \in I)$, then $q$ admits a (not necessarily unique) \bfem{balanced companion}, i.e., a companion
  $b : V \times V \to R$ such that $b(x,x) = 2q(x)$ for all $x \in V$, and that it suffices to know for this that  $b(\veps_i,\veps_i) = 2q( \veps_i)$ for all $i \in I $ \cite[Proposition 1.7]{QF1}. Balanced companions are a crucial   ingredient
  in our definition below of a tensor product of a free bilinear module and a free quadratic module. They arise from ``expansions'' of $q$, defined as follows, cf.~ \cite[Definition 1.9]{QF1}.

  \begin{defn}\label{defn:5.1}
  A bilinear form $B:V\times V\to R$ (not necessarily symmetric) is an \bfem{expansion} of a balanced pair $(q,b)$ if $B+B^t=b,$ i.e.,
  \begin{equation}\label{eq:5.1}
  B(x,y)+B(y,x)=b(x,y)
  \end{equation}
  for all $x,y\in V,$ and
 \begin{equation}\label{eq:5.2}
  q(x) =B(x,x)
  \end{equation}
  for all $x\in V.$ If only the form $q$ is given and \eqref{eq:5.2} holds, we say that $B$ \textbf{is an expansion} of~$q.$
 \end{defn}

 As stated in the \cite[\S1]{QF1}, every bilinear form $B: V\times V\to R$ gives us a balanced pair $(q,b)$ via \eqref{eq:5.1} and \eqref{eq:5.2}, and, if the $R$-module $V$ is free, we obtain
all such pairs $(q,b)$   in this way. But   we will need  a
description of \textbf{all} expansions of $(q,b)$ in the free case.

 \begin{construction}\label{constr:5.2}
 Assume that $V$ is a free $R$-module and $(\veps_i \ds |i\in I)$ is a base of $V.$ When ~$(q,b)$ is a balanced pair on $V,$ we
  obtain all expansions $B:V\times V\to R$ of~$(q,b)$ as follows.

 Let $\alpha_i:=q(\veps_i),$ $\beta_{ij}:=b(\veps_i,\veps_j)$ for $i,j\in I.$ We have $\beta_{ij}=\beta_{ji}.$ We choose a total ordering on $I$ and for every $i< j$ two elements $\chi _{ij},\chi _{ji}\in R$ with
 $$\beta_{ij}=\chi _{ij}+\chi _{ji},\qquad (i< j).$$
 We furthermore put
 $$\chi _{ii}:=\alpha_i,$$
 and define $B$ by the rule
 $$B(\veps_i,\veps_j)=\chi _{ij}$$
 for all $(i,j)\in I\times I.$
 \end{construction}

 In practice one usually chooses $\chi _{ij}=\beta_{ij},$ $\chi _{ji}=0$ for $i<j,$ i.e., takes the unique ``triangular''  expansion $B$ of $(q,b),$ cf.~\cite[\S1]{QF1}, but now we do not want to depend on the choice of a total ordering of the base $(\veps_i \ds |i\in I).$ We used such an ordering above only to ease notation.
\pSkip

\textbf{Tensor products} over semirings in general require the use
of congruences \cite{H}, but for free modules the basics can be done
precisely as over rings, and  we  leave the formal details to the
interested reader.
 We only state here that, given two free $R$-modules $V_1$ and $V_2$, with  bases $\mfB_1$ and $\mfB_2$,  the
 $R$-module  $V_1 \otimes_R V_2 $ ``is'' the free $R$-module with base $\mfB_1 \otimes \mfB_2 $, which is a renaming of $\mfB_1 \times \mfB_2$, writing $\veps \otimes  \eta$ for $(\veps, \eta)$ with  $\veps \in \mfB_1$, $\eta \in \mfB_2.$ If
 $$ \mfB_1 = \{ \veps_i \ds | i \in I \}, \qquad  \mfB_2 = \{ \eta_j \ds | j \in J \}$$
  and $x= \sum\limits_{i \in I} \in V_1$ and  $y= \sum\limits_{j \in J} \in V_2$, we define, as common over rings,
  \begin{equation}\label{eq:5.x.1}
x \otimes y := \sum_{(i,j) \in I \times J} x_i y_j (\veps_i \otimes y_j),
 \end{equation}
  and this vector is independent  of the choice of the bases $\mfB_1$ and $\mfB_2$.
  If $B_1$ and $B_2$ are bilinear forms on $V_1$ and $V_2$ respectively, we have a well defined bilinear form on $V_1 \otimes_R V_2$, denoted by  $B_1 \otimes B_2$, such that for any $x_i \in V_1$, $y_j \in V_2$ ($i,j \in \{ 1,2\}$)
    \begin{equation}\label{eq:5.x.2}
(B_1 \otimes B_2)(x_1 \otimes x_2, y_1 \otimes y_2) = B_1(x_1,y_1) B_2(x_2, y_2).
 \end{equation}
 If $b_1$ and $b_2$ are symmetric bilinear forms on $V_1$ and $V_2$ respectively, then  $b_1 \otimes b_2$ is symmetric. Then we call the bilinear module $(V_1 \otimes_R V_2, b_1 \otimes b_2)$ the \textbf{tensor product of the bilinear modules} $(V_1, b_1)$ and $(V_2, b_2)$.

We next define the tensor product of a free bilinear and a free quadratic module.
The key fact which allows us to do this in a reasonable way is as follows.
 \begin{prop}\label{prop:5.3}
 Let $\gm :U\times U\to R$ be a symmetric bilinear form and $(q,b)$ a balanced quadratic pair on $V.$ Assume that $B$ and $B'$ are two expansions of $(q,b)$.
 Then the bilinear forms $\gm \otimes B$ and $\gm \otimes B'$ on $U\otimes V$ yield the same balanced pair $(\tlq,\tlb)$ on $U\otimes V.$ We have $\tlb=\gm \otimes b,$ whence for $u_1,u_2\in U,$ $v_1,v_2\in V,$
 \begin{equation}\label{eq:5.3}
 \tlb(u_1\otimes v_1,u_2\otimes v_2)=\gm (u_1,u_2)b(v_1,v_2).
 \end{equation}
 Furthermore, for $u\in U$ and $v\in V,$
 \begin{equation}\label{eq:5.4}
 \tlq(u\otimes v)=\gm (u,u)q(v).
 \end{equation}
  \end{prop}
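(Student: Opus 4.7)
The plan is to verify (5.3) and (5.4) by direct computation and show that all quantities depend only on the pair $(q,b)$, not on the particular expansion $B$. Equation (5.4) is immediate: since $B$ is an expansion of $q$ we have $B(v,v)=q(v)$, so $(\gm \otimes B)(u \otimes v,\, u \otimes v) = \gm(u,u)\,B(v,v) = \gm(u,u)\,q(v)$, already independent of $B$. For (5.3), I would compute on pure tensors:
\[
\tlb(u_1 \otimes v_1,\, u_2 \otimes v_2) \;=\; \gm(u_1,u_2)\,B(v_1,v_2) + \gm(u_2,u_1)\,B(v_2,v_1).
\]
Using $\gm(u_1,u_2) = \gm(u_2,u_1)$, the right-hand side factors as $\gm(u_1,u_2)\bigl[B(v_1,v_2)+B(v_2,v_1)\bigr] = \gm(u_1,u_2)\,b(v_1,v_2)$, which is (5.3); by bilinear extension $\tlb$ coincides with $\gm \otimes b$ globally, so $\tlb$ depends only on $b$.

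The real work is to see that $\tlq$ itself, not merely its values on pure tensors, is independent of $B$. For this I would pick bases $(\mu_k)_{k\in K}$ of $U$ and $(\veps_i)_{i\in I}$ of $V$, so that $(\mu_k \otimes \veps_i)$ is a base of $U \otimes_R V$. For $w = \sum_{k,i} w_{ki}\,\mu_k \otimes \veps_i$, I would expand $(\gm \otimes B)(w,w)$ as a double sum over ordered pairs of basis vectors and group it into diagonal and off-diagonal contributions:
\begin{align*}
(\gm \otimes B)(w,w) &\;=\; \sum_{k,i} w_{ki}^2\,\gm(\mu_k,\mu_k)\,q(\veps_i) \\
&\qquad + \sum_{\{(k,i),(l,j)\}} w_{ki}\, w_{lj}\,\gm(\mu_k,\mu_l)\,\bigl[B(\veps_i,\veps_j) + B(\veps_j,\veps_i)\bigr],
\end{align*}
where the second sum runs over unordered pairs $\{(k,i),(l,j)\}$ with $(k,i) \ne (l,j)$, the two ordered occurrences having been combined via the symmetry of $\gm$. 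If $i \ne j$, the bracket equals $b(\veps_i,\veps_j)$ by the expansion identity $B + B^t = b$. If $i = j$ (and hence $k \ne l$), the bracket equals $2B(\veps_i,\veps_i) = 2q(\veps_i) = b(\veps_i,\veps_i)$, where the last equality is the balanced hypothesis. In either case the bracket depends only on $(q,b)$, so $\tlq(w)$ does too.

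The main obstacle, and indeed the reason the balanced hypothesis enters the statement at all, is precisely the $i = j$ case in the off-diagonal sum: without $b(\veps_i,\veps_i) = 2q(\veps_i)$ there is no way to rewrite $2B(\veps_i,\veps_i)$ in terms of $(q,b)$ alone, and $\tlq$ would genuinely depend on $B$. The ring-theoretic shortcut of arguing that the difference $B - B'$ is alternating is unavailable in the semiring setting, so the explicit split into diagonal and off-diagonal pair contributions is the appropriate substitute.
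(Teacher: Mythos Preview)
Your proof is correct and follows essentially the same route as the paper's: the paper computes $\gm\otimes B+(\gm\otimes B)^t=\gm\otimes(B+B^t)=\gm\otimes b$ at the level of forms, checks $(\gm\otimes B)(u\otimes v,u\otimes v)=\gm(u,u)q(v)$ on pure tensors, and then remarks that these two pieces of data determine $\tlq$ on all of $U\otimes V$; you simply make that last step explicit by the basis expansion.

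One small correction to your closing commentary: the balanced hypothesis is \emph{not} what makes the $i=j$ off-diagonal term independent of $B$. You already have $2B(\veps_i,\veps_i)=2q(\veps_i)$ from the definition of an expansion, and that is manifestly a function of $q$ alone. The role of balancedness is rather that it is exactly the condition under which expansions of $(q,b)$ exist in the first place (since $b(x,x)=B(x,x)+B(x,x)=2q(x)$ is forced), and it is what lets you write the bracket uniformly as $b(\veps_i,\veps_j)$ so that $\tlb=\gm\otimes b$ and $(\tlq,\tlb)$ is again balanced. Your argument stands; only the diagnosis of where ``balanced'' is doing work needs adjusting.
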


  \begin{proof}
  $\gm \otimes B+(\gm \otimes B)^t=\gm \otimes B+\gm ^t\otimes B^t=\gm \otimes B+\gm \otimes B^t=\gm \otimes(B+B^t)=\gm \otimes b.$ Also $\gm \otimes B'+(\gm \otimes B')^t=\gm \otimes b.$ Furthermore,
  $$(\gm \otimes B)(u\otimes v,u\otimes v)=\gm (u,u)B(v,v)=
  \gm (u,u)q(v)=(\gm \otimes B')(u\otimes v,u\otimes v)$$
  for any $u\in U,$ $v\in V.$ Together these equations  imply $(\gm \otimes B)(z,z)=(\gm \otimes B')(z,z)$
for any $z\in U\otimes V.$
  \end{proof}

   \begin{defn}\label{defn:5.4}
   We call $\tlq$ the \bfem{tensor product} of the bilinear form $\gm $ and the quadratic form $q$ \bfem{with respect to the balanced companion} $b$ of $q,$ and write
   $$\tlq=\gm \otimes_b q,$$
   and we also write $\tlV=U\otimes_b V $ for the quadratic $R$-module $\tlV=(U\otimes V,\tlq).$
   \end{defn}

   \begin{remark}\label{rem:5.5}
   If $q$ has only one balanced companion, we may suppress the ``$b$'' here, writing $\tlq=\gm \otimes q.$ Cases in which this happens are:
   $q$ is rigid, $V$ has rank  one,  $R$ is embeddable in a ring. % $q=[a]$ has the unique balanced companion $\langle a\rangle.$
   \end{remark}

   \begin{prop}
  \label{prop:5.6}
  If $U=(U,\gm )$ has an orthogonal decomposition $U= \bigPerp\limits_{i\in I} U_i,$ then
  $$U\otimes_b V = \BigPerp_{i\in I}U_i\otimes_b V.$$
  \end{prop}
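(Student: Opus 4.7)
My plan is to verify the two conditions in Definition \ref{defn:3.2} (for quadratic modules): that $U\otimes_R V$ is the direct sum of the submodules $U_i\otimes_R V$, and that these submodules are pairwise disjointly orthogonal with respect to $\tlq=\gm \otimes_b q$.

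The first condition is module-theoretic and has nothing to do with the form. Since $U = \opl_{i\in I} U_i$ as $R$-modules, the construction of $\otimes_R$ recalled before Proposition \ref{prop:5.3} (in terms of a free generating set obtained by pairing bases) gives the canonical decomposition $U\otimes_R V=\opl_{i\in I}(U_i\otimes_R V)$, so in particular $(U_i\otimes_R V)\cap (U_j\otimes_R V)=\{0\}$ for $i\ne j$.

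For the second condition, I would invoke Proposition \ref{prop:5.3}. The quadratic pair attached to $\tlq$ has the companion $\tlb=\gm \otimes b$, and the defining identity \eqref{eq:I.0.2} for quadratic pairs says that for $z\in U_i\otimes_R V$ and $w\in U_j\otimes_R V$,
\begin{equation*}
\tlq(z+w)=\tlq(z)+\tlq(w)+\tlb(z,w).
\end{equation*}
So it suffices to show $\tlb(z,w)=0$ whenever $i\ne j$. By bilinearity of $\tlb$ and since $U_i\otimes_R V$, $U_j\otimes_R V$ are spanned by simple tensors $u_1\otimes v_1$ with $u_1\in U_i$, resp.~$u_2\otimes v_2$ with $u_2\in U_j$, it is enough to verify vanishing on these generators. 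But \eqref{eq:5.3} gives
\begin{equation*}
\tlb(u_1\otimes v_1,u_2\otimes v_2)=\gm (u_1,u_2)\,b(v_1,v_2),
\end{equation*}
and by the hypothesis $U=\bigPerp_{i\in I} U_i$ (Definition \ref{defn:3.1}), we have $\gm (U_i,U_j)=0$, hence the right-hand side is $0$.

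Therefore $\tlb(U_i\otimes_R V,\ U_j\otimes_R V)=0$ for every pair of distinct indices, which forces $\tlq$ to be quasilinear on $(U_i\otimes_R V)\times (U_j\otimes_R V)$, i.e., $U_i\otimes_b V$ is disjointly orthogonal to $U_j\otimes_b V$ in the sense of Definition \ref{defn:2.1}. Combined with the direct sum decomposition, this yields the stated orthogonal decomposition of $U\otimes_b V$. The only subtle point is that $\tlb$ is merely \emph{a} companion of $\tlq$ (not the unique one), so the vanishing of $\tlb$ on cross pairs is exactly what is needed to conclude quasilinearity via \eqref{eq:I.0.2}; there is no obstacle here because Proposition \ref{prop:5.3} delivers this companion independently of the choice of expansion $B$.
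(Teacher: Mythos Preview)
Your argument is correct and follows exactly the same route as the paper: the paper's one-line proof simply records that $(\gm\otimes b)(U_i\otimes V,\,U_j\otimes V)=0$ for $i\ne j$, which is precisely what you establish via \eqref{eq:5.3} and then feed into the companion identity \eqref{eq:I.0.2}. You have merely spelled out the details (the module-theoretic direct sum and the passage from vanishing of $\tlb$ to quasilinearity of $\tlq$) that the paper leaves implicit.
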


\begin{proof}
It is immediate that
 $(\gm \otimes b)(U_i\otimes V,U_j\otimes V)=0$ for $i\ne j.$
 \end{proof}

We proceed to explicit examples. For this we need notation from \cite[\S1]{QF1} which we recall for the convenience of the reader.

Assume that $V$ is free of finite rank $n$ and $\mfB$ is a base of $V$ for which we now choose a total ordering, $\mfB = (\veps_1, \veps_2, \dots, \veps_n)$. Then we identify a bilinear form $B$ on $V$ with the $(n\times n)$-matrix
\begin{equation}\label{eq:5.y.1}
B=\begin{pmatrix} \bt_{11} & \bt_{12}& \cdots & \bt_{1n}\\
\bt_{21}& \bt_{22} & & \bt_{2n}\\
 \vdots  & \vdots  &\ddots & \vdots \\
\bt_{n1}& \cdots & & \bt_{nn}\end{pmatrix}, \end{equation}
where $\bt_{ij} = B(\veps_1, \veps_j).$ In particular, a bilinear $R$-module $(V,\bt)$ is denoted by a symmetric $(n \times n )$-matrix, namely its Gram matrix $b = (\bt_{ij})_{1 \leq i,j \leq n}$, where $\bt_{ij} = \bt_{ji} = b(\veps_i, \veps_j)$.

Given a quadratic module $(V,q)$, we choose a triangular expansion
\begin{equation}\label{eq:5.y.2}
B=\begin{pmatrix} \al_{1} & \al_{12}& \cdots & \al_{1n}\\
0 & \al_{2} & \cdots & \al_{2n}\\
 \vdots  & &\ddots & \vdots \\
0 & \cdots & 0 & \al_{n}\end{pmatrix} \end{equation} of $q$ and
denote $q$ by the triangular scheme
\begin{equation}\label{eq:5.y.3}
q=\begin{bmatrix} \al_{1} & \al_{12}& \cdots & \al_{1n}\\
& \al_{2} & \cdots & \al_{2n}\\
   & &\ddots & \vdots \\
 &  &  & \al_{n}\end{bmatrix} \end{equation}
so that $q$ is given by the polynomial
$$ q(x) = \sum_{i=1}^n \al_i x_i^2 + \sum_{i< j}^n \al_{ij} x_i x_j.$$
(Such triangular schemes have already been used in the literature
when $R$ is a ring, e.g.~\cite[~I~\S2]{Kneser}.) In the case that $q$
is diagonal, i.e., all $\al_{ij}$ with $i< j$ are zero, we usually
write instead of~\eqref{eq:5.y.2} the single row
\begin{equation}\label{eq:5.y.4} q=[\al_1,\al_2,\dots, \al_n].\end{equation}
Analogously we use for a diagonal symmetric bilinear form $b$ (i.e., $b(\veps_i, \veps_j) = 0$ for $i \neq j$) the notation
\begin{equation}\label{eq:5.y.5} b= \langle\bt_{11},\bt_{22},\dots, \bt_{nn}\rangle.\end{equation}

We note that the quadratic form \eqref{eq:5.y.3} has the balanced companion \begin{equation}\label{eq:5.y.6}
b=\begin{pmatrix} \al_{1} & \al_{12}& \cdots & \al_{1n}\\
\al_{12}& \al_{2} & & \al_{2n}\\
 \vdots  & \vdots  &\ddots & \vdots \\
\al_{1n}& \cdots & & \al_{n}\end{pmatrix} \end{equation} and
\eqref{eq:5.y.4}, being diagonal, has the unique (!) balanced
companion
\begin{equation}\label{eq:5.y.7} b= \langle 2 \al_{1}, 2 \al_{2},\dots, 2 \al_{n}\rangle.\end{equation}

 \begin{example}\label{examp:5.7}
 If $a_1,\dots, a_n,$ $c\in R,$ then
 \begin{equation}\label{eq:5.5}
 \langle a_1,\dots a_n\rangle\otimes[c]=[a_1c,\dots,a_nc].
 \end{equation}
 This is evident from \propref{prop:5.6} and the rule $\langle a\rangle\otimes[c]=[ac]$ for one-dimensional forms which holds by \eqref{eq:5.4}. In particular
  \begin{equation}\label{eq:5.6}
[a_1,\dots,a_n]= \langle a_1,\dots a_n\rangle\otimes[1].
 \end{equation}
 \end{example}

 \begin{example}\label{examp:5.8} (As before, $R$ is any semiring.)
 Assume that $V=(V,q)$ has dimension $n$, and take a base $\eta_1,\dots, \eta_n$ of $V.$ Let $$(U,\gm )=\begin{pmatrix} 0 & 1\\ 1&0\end{pmatrix}$$ with base $\veps_1,\veps_2.$ We choose a balanced companion $b$ of $V,$ written as a symmetric $(n\times n)$-matrix $(b(\eta_i,\eta_j)).$ We see by the use of the rules \eqref{eq:5.3} and \eqref{eq:5.4} that
 \begin{equation}\label{eq:5.7}
 \begin{pmatrix} 0 & 1\\ 1&0\end{pmatrix}\otimes_bq=\begin{bmatrix}[c|c]
  0 & b\\
  \hline
   & 0
\end{bmatrix}
 \end{equation}
 written with respect to the base
 $$\veps_1\otimes \eta_1,\dots,\veps_1\otimes \eta_n,\veps_2\otimes \eta_1,\dots, \veps_2\otimes\eta_n.$$
 \end{example}

 This example illustrates dramatically that in general \textbf{the tensor product of $\gm $ and $q$ depends on the chosen balanced companion $b$ of $q$}: tensoring $q$ by $ \left(\begin{smallmatrix} 0 & 1\\ 1&0\end{smallmatrix} \right)$ produces the symmetric matrix of $b.$

 \begin{remark}\label{remark:5.9}
 If $\gm _1$ and $\gm _2$ are bilinear forms on the same free $R$-module $U$, then the rules~ \eqref{eq:5.3} and \eqref{eq:5.4} imply for any $\lm_1,\lm_2\in R$ that
 \begin{equation}\label{eq:5.8}
 (\lm_1\gm _1+\lm_2\gm _2)\otimes _bq=\lm_1(\gm _1\otimes_bq)+\lm_2(\gm _2\otimes_bq).
 \end{equation}
 \end{remark}

 \begin{example}\label{examp:5.10}
 Using \eqref{eq:5.8} with
 $$\gm _1=\langle a_1,a_2\rangle,\quad \gm _2=\begin{pmatrix} 0 & 1\\ 1 & 0\end{pmatrix},\quad \lm_1=1,\quad \lm_2=\lm,$$
 we obtain from \propref{prop:5.6} and Example \ref{examp:5.7} that
 \begin{equation}\label{eq:5.9}
 \begin{pmatrix} a_1 & \lm\\ \lm & a_2\end{pmatrix}\otimes_b q=
 \begin{bmatrix}[c|c]
  a_1q & \lm b\\
  \hline
   & a_2q
\end{bmatrix}
\end{equation}
 \end{example}

\begin{example}\label{examp:5.11}
Let
$$q=\begin{bmatrix} 0 & a_{12} &\cdots & a_{1n}\\
& \ddots&\ddots &  \vdots\\
& & &   a_{n-1,n}\\
& &  &  0
\end{bmatrix}$$
with $a_{ij}\in R$ $(i<j).$ Then $q$ is rigid (cf.~\cite[Proposition
3.4]{QF1}; no assumption on $R$ is needed here).  Furthermore, let
$$\gm =\begin{pmatrix} \gm _{11} & \cdots&\gm _{1m}\\
\vdots &  & \vdots\\
\gm _{m1}&\cdots & \gm _{mm}\end{pmatrix}$$
with $\gm _{ij}=\gm _{ji}\in R.$ Then we obtain by the rules \eqref{eq:5.3} and  \eqref{eq:5.4} that
\begin{equation}\label{eq:5.10}
\gm \otimes q \ds = \
 \begin{matrix}[|c|c|c|c|]
    \hline
   0 & a_{12}\gm  & \cdots & a_{1n}\gm \\
     \hline
    & 0 & & a_{2n}\gm \\
   \hline
   & & \ddots & a_{n-1,n}\gm \\
   \hline
   & & & 0\\
   \hline
\end{matrix}
\end{equation}
More precisely, if the presentations of $q$ and $\gm $ above refer to ordered bases $(\eta_1,\dots,\eta_n)$ and $(\veps_1,\dots,\veps_m),$ respectively, then  \eqref{eq:5.10} refers to the ordered base
$$(\veps_1\otimes \eta_1,\dots,\veps_m\otimes\eta_1,\veps_1\otimes\eta_2,\dots,\veps_m\otimes \eta_n).$$
\end{example}

We now consider the tensor product $\gm \otimes[a]=\gm
\otimes_b[a],$  cf.~Equation~\eqref{eq:5.y.4}, where $b$ is the
unique balanced companion of $[a]$, \eqref{eq:5.y.7}. Our starting
point is a definition which makes sense for any semiring $R$ and any
$R$-module $U.$

\begin{defn}\label{defn:5.12}
Let $\gm :U\times U\to R$ be a symmetric bilinear form. The \bfem{norm form} of $\gm $ is the quadratic form $n(\gm ):U\to R$ with
$$n(\gm )(x):=\gm (x,x)$$
for any $x\in U.$
\end{defn}

\begin{remark}\label{remark:5.13}
The norm form $n(\gm )$ has the expansion $\gm : U\times U\to R$ and
the associated balanced companion $\gm +\gm ^\t=2\gm .$ The norm
forms are precisely all the quadratic forms which admit a symmetric
expansion. If $U$ has a finite base $\veps_1,\dots,\veps_n$, then
with respect to this base
\begin{equation}\label{eq:5.11}
n(\gm )=\begin{bmatrix} \gm _{11} & 2\gm _{12} &\cdots & 2\gm _{1m}\\
& \gm _{22} & &\\
& &\ddots&\vdots\\
& & &\gm _{mm}
\end{bmatrix},
\end{equation}
where $\gm _{ij}:=\gm (\veps_i,\veps_j).$
\end{remark}

\begin{prop}\label{prop:5.14}
Assume that $U=(U,\gm )$ is a free bilinear $R$-module and $a\in R.$ Then
\begin{equation}\label{eq:5.12}
U\otimes [a]\cong(U,\ a\, n(\gm )).
\end{equation}
\end{prop}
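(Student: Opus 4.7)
The plan is to exploit the fact that $[a]$ is a rank-one quadratic module, which makes the tensor product especially transparent. Write $V := R\eta$ for the underlying module of $[a]$, where $\eta$ is the distinguished base vector with $q(\eta) = a$. By Remark~\ref{rem:5.5} (or directly from \eqref{eq:5.y.7}), $[a]$ admits a unique balanced companion, namely $b = \langle 2a\rangle$, so the notation $U \otimes [a]$ is unambiguous and the choice of expansion is immaterial.

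Next I would invoke the canonical $R$-linear isomorphism
$$\sigma: U \longrightarrow U \otimes_R V, \qquad u \longmapsto u \otimes \eta.$$
Since $V = R\eta$, every element of $U \otimes_R V$ has the form $u \otimes \eta$ for a unique $u \in U$ (this is immediate from the description of $U \otimes_R V$ as having base $\mathfrak{B}_U \otimes \{\eta\}$, where $\mathfrak{B}_U$ is any base of $U$, together with the bilinearity $(ru) \otimes \eta = u \otimes (r\eta)$). Hence $\sigma$ is an $R$-linear bijection.

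It remains to check that $\sigma$ is an isometry between the quadratic modules $(U, a \cdot n(\gamma))$ and $(U\otimes V, \gamma \otimes q)$. This is a single-line application of the key formula \eqref{eq:5.4} from Proposition~\ref{prop:5.3}: for every $u \in U$,
$$(\gamma \otimes q)(\sigma(u)) = (\gamma \otimes q)(u \otimes \eta) = \gamma(u,u) \cdot q(\eta) = a \cdot \gamma(u,u) = a \cdot n(\gamma)(u),$$
by Definition~\ref{defn:5.12} of the norm form. This establishes the claimed isometry \eqref{eq:5.12}.

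There is no real obstacle to expect here; the result is essentially a direct unpacking of the definition of the tensor product in the rank-one case, and the work was already done in proving Proposition~\ref{prop:5.3}. The only subtle point worth emphasizing is that no choice of balanced companion $b$ needs to be specified, because $[a]$ has only one.
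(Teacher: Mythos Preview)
Your proof is correct and follows essentially the same approach as the paper: realize $[a]$ on $V=R\eta$ with $q(\eta)=a$, note the unique balanced companion, and compute $\tilde q(u\otimes\eta)=\gamma(u,u)\,a$ via \eqref{eq:5.4}. You are simply more explicit than the paper about the linear isomorphism $u\mapsto u\otimes\eta$ and why it is bijective; the paper compresses this into ``the claim is obvious.''
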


\begin{proof}
We realize the form $[a]$ as a quadratic module $(V,q)$ with
$V=R\eta$ free of rank 1 and $q(\eta)=a.$ $\{q$ has the unique
balanced companion $b:V\times V\to R$, with $b(\eta,\eta)=2a.\}$ The
form $\tlq:=\gm \otimes q=\gm \otimes_bq$ is given by
$$\tlq(x\otimes\eta)=\gm (x,x)a=(an(\gm ))(x).$$
The claim is obvious.
\end{proof}

\begin{examp}\label{examp:5.15}
Assume that $U$ has base $\veps_1,\dots,\veps_m.$ Let $\gm
_{ij}:=\gm (\veps_i,\veps_j).$ Then $$\gm \otimes[a]\cong(a\gm
)\otimes [1],$$ and
\begin{equation}\label{eq:5.13}
\gm \otimes [1]=\begin{bmatrix}\gm _{11} & 2\gm _{12} &\cdots & 2\gm _{1n}\\
&\gm _{22} & &\\
& &\ddots &\vdots\\
& & & \gm _{mm}
\end{bmatrix},
\end{equation}
where the right hand side refers to the base $\veps_1\otimes\eta,\,
\veps_2\otimes\eta,\,\dots,\, \veps_m\otimes\eta.$
\end{examp}

At a crucial point in \S\ref{sec:6}, we will need an explicit
description of the tensor products $\gm \otimes_bq$ with~ $q$
indecomposable of rank 2. We start with a general fact.

\begin{prop}\label{prop:5.16}
Assume that $\gm $ is a symmetric bilinear form on a free $R$-module $U$ and $q_1,q_2$ are quadratic forms on a free $R$-module $V.$ Let $b_1,b_2$ be
 balanced companions of $q_1$ and~$q_2,$ respectively. Let $q:=\lm_1q_1+\lm_2q_2$ with $\lm_1,\lm_2\in R.$ Then $b:=\lm_1b_1+\lm_2b_2$ is a balanced companion of $q,$ and
\begin{equation}\label{eq:5.14}
\gm \otimes_bq=\lm_1(\gm \otimes_{b_1}q_1)+\lm_2(\gm \otimes_{b_2}q_2).
\end{equation}
This form has the balanced companion $\gm \otimes b$ (as we know) and
\begin{equation}\label{eq:5.15}
\gm \otimes b=\lm_1(\gm \otimes b_1)+\lm_2(\gm \otimes b_2).
\end{equation}
\end{prop}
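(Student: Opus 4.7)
The plan is to verify the three assertions in sequence, each reducing to a direct bilinearity/sum-splitting computation once the right choice of expansion is made.

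First I would establish that $b := \lm_1 b_1 + \lm_2 b_2$ is indeed a balanced companion of $q := \lm_1 q_1 + \lm_2 q_2$. The companion relation $q(x+y) = q(x) + q(y) + b(x,y)$ follows by $R$-linearly combining the corresponding identities for $(q_i, b_i)$. For the balance condition, $b(x,x) = \lm_1 b_1(x,x) + \lm_2 b_2(x,x) = \lm_1 \cdot 2 q_1(x) + \lm_2 \cdot 2 q_2(x) = 2 q(x)$, so Proposition~1.7 of \cite{QF1} (invoked implicitly via the definition of ``balanced'') applies.

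For the main identity \eqref{eq:5.14}, the key observation is that expansions combine linearly: if $B_1$ is an expansion of $(q_1, b_1)$ and $B_2$ is an expansion of $(q_2, b_2)$, then $B := \lm_1 B_1 + \lm_2 B_2$ satisfies
\[
B + B^{\t} = \lm_1(B_1 + B_1^\t) + \lm_2(B_2 + B_2^\t) = \lm_1 b_1 + \lm_2 b_2 = b,
\]
and $B(x,x) = \lm_1 q_1(x) + \lm_2 q_2(x) = q(x)$, so $B$ is an expansion of $(q,b)$. By \propref{prop:5.3}, the quadratic form $\gm \otimes_b q$ is then described by $(\gm \otimes B)(z,z)$, and using bilinearity of the tensor construction for bilinear forms,
\[
\gm \otimes B = \lm_1(\gm \otimes B_1) + \lm_2(\gm \otimes B_2).
\]
Evaluating on the diagonal and invoking \propref{prop:5.3} for each $(q_i, b_i)$ gives \eqref{eq:5.14}.

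The companion identity \eqref{eq:5.15} is then immediate from the fact that $\otimes$ distributes over addition of bilinear forms in the second slot: $\gm \otimes b = \gm \otimes (\lm_1 b_1 + \lm_2 b_2) = \lm_1(\gm \otimes b_1) + \lm_2(\gm \otimes b_2)$, which is built into the construction of tensor products of free bilinear modules at the start of \S\ref{sec:5}. I do not anticipate any real obstacle here: once Construction~\ref{constr:5.2} and \propref{prop:5.3} are in hand, everything is formal bilinearity, and the only point that warrants a sentence of care is that $B = \lm_1 B_1 + \lm_2 B_2$ really is a legitimate expansion of $(q,b)$, which is checked above in two lines.
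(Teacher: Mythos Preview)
Your argument is correct and is essentially the same direct verification the paper has in mind; the paper's own proof is the single line ``An easy check by use of \eqref{eq:5.3} and \eqref{eq:5.4},'' and your expansion-based computation is one clean way to make that check explicit. The only stylistic difference is that you route through the expansion $B=\lm_1B_1+\lm_2B_2$ and \propref{prop:5.3}, whereas the paper would simply compare both sides on decomposable tensors via \eqref{eq:5.3} and \eqref{eq:5.4}; both amount to the same bilinearity bookkeeping.
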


\begin{proof}
An easy check by use of \eqref{eq:5.3} and \eqref{eq:5.4}.
\end{proof}

\begin{examp}\label{examp:5.17}
We take a free module $V$ with base $\eta_1,\eta_2$, and choose with
 respect to this base
$$q_1=\begin{bmatrix} a_1 &0\\  & a_2\end{bmatrix}=[a_1,a_2],\qquad q_2=\begin{bmatrix} 0 & c\\ & 0
\end{bmatrix}$$
with $a_1,a_2,c\in R,$ $c\ne0,$ and the balanced companions
$$b_1=\begin{pmatrix} 2a_1 & 0\\ 0 & 2a_2\end{pmatrix},\qquad b_2=\begin{pmatrix} 0 & c\\ c& 0\end{pmatrix}.$$
Then
$$q:=q_1+q_2=\begin{bmatrix} a_1 & c\\ & a_2\end{bmatrix}
$$
has the balanced companion
$$b:=b_1+b_2=\begin{pmatrix} 2a_1 & c\\ c& 2a_2\end{pmatrix}.$$
For $$\gm =\begin{pmatrix} \gm _{11} &\cdots &\gm _{1m}\\  \vdots &&
\vdots \\ \gm _{m1} &\cdots & \gm _{mm}\end{pmatrix}$$  on a free
module $U$ with to the base $\veps_1,\dots,\veps_m$, we get
$$\gm \otimes_{b_1}q_1=\begin{bmatrix}[c|c] a_1n(\gm ) & 0\\ \hline &a_2n(\gm )\end{bmatrix},\qquad
\gm \otimes_{b_2}\begin{bmatrix} 0 &c\\ &
0\end{bmatrix}=\begin{bmatrix}[c|c] 0 & c\gm \\\hline &
0\end{bmatrix}, \qquad \text{cf.~\eqref{eq:5.10}},$$
 and finally
 \begin{equation}\label{eq:5.16}
 \gm \otimes_b\begin{bmatrix}a_1 &c\\ & a_2\end{bmatrix}= \begin{bmatrix}[c|c]
  a_1n(\gm ) & c\gm \\
  \hline
   & a_2n(\gm )
\end{bmatrix}
\end{equation}
with respect to the base
$$\veps_1\otimes\eta_1,\dots,\veps_m\otimes\eta_1,\veps_1\otimes\eta_2,\dots,\veps_m\otimes
\eta_2.$$
\end{examp}

\begin{remark}\label{rem:5.18}
From \eqref{eq:5.16} and \eqref{eq:5.9}, we obtain the useful
formula
\begin{equation}\label{eq:5.17}
\gm \otimes_b\begin{bmatrix} a_1 & c\\ & a_2\end{bmatrix}=\begin{pmatrix} a_1 & c\\ c& a_2\end{pmatrix} \otimes _{2\gm } n(\gm )\ ,
\end{equation}
by use of Example \ref{examp:5.10} for the quadratic pair $(n(\gm ),2\gm ).$
\end{remark}

\textbf{From now on, we assume that $V$ has unique base.} \{We do not need that $U$ has unique base.\}

\begin{defn}\label{defn:5.19} We call a companion $b$ of $q$ \bfem{faithful} if $b$ is balanced and quasiminimal.
\end{defn}

\begin{prop}\label{prop:5.20}
Assume that $b$ is a faithful companion of $q,$ and that $V=W_1\perp
W_2$ is an orthogonal decomposition of $V.$ Then, writing
$U\otimes_b W_i$ instead of $U\otimes_{(b|W_i)} W_i$, we have
$$U\otimes_b V=U\otimes_bW_1\perp U\otimes_bW_2$$
for any bilinear $R$-module $U.$
\end{prop}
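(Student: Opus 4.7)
The plan is to build a suitable expansion $B$ of the balanced pair $(q,b)$ that respects the decomposition $V = W_1 \oplus W_2$, observe that $\gm \otimes B$ then restricts componentwise, and finally verify disjoint orthogonality of $U \otimes W_1$ and $U \otimes W_2$ inside $(U \otimes V, \gm \otimes_b q)$ using the balanced companion $\gm \otimes b$.

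First I would apply \lemref{lem:2.5} to the basic submodules $W_1, W_2$ and the quasiminimal companion $b$ to deduce $b(W_1, W_2) = 0$. Since $b$ is balanced, the restriction $b_i := b|W_i$ is then a faithful companion of $q_i := q|W_i$, making $U \otimes_{b_i} W_i$ well-defined in the sense of Definition \ref{defn:5.4}. Next, starting from a base $\mfB = \mfB_{W_1} \sqcup \mfB_{W_2}$ of $V$, I would apply Construction \ref{constr:5.2} to pick an expansion $B$ of $(q,b)$ with $B(\veps, \eta) = B(\eta, \veps) = 0$ for all $\veps \in \mfB_{W_1}$, $\eta \in \mfB_{W_2}$; this is possible precisely because the corresponding coefficients $b(\veps, \eta)$ vanish by the previous step. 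Then $B$ restricted to $W_i \times W_i$ is an expansion of $(q_i, b_i)$, and $\gm \otimes B$ restricted to $(U \otimes W_i) \times (U \otimes W_i)$ is $\gm \otimes (B|_{W_i \times W_i})$; hence $\tlq := \gm \otimes_b q$ restricts on $U \otimes W_i$ to $\gm \otimes_{b_i} q_i$, giving one half of the desired identification.

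To finish, I would verify the orthogonality: the $R$-module decomposition $U \otimes V = (U \otimes W_1) \oplus (U \otimes W_2)$ is immediate from $V = W_1 \oplus W_2$, and by \propref{prop:5.3} the balanced companion of $\tlq$ is $\gm \otimes b$, which on pure tensors satisfies
$$(\gm \otimes b)(u_1 \otimes w_1, u_2 \otimes w_2) = \gm(u_1, u_2) \, b(w_1, w_2) = 0$$
whenever $w_1 \in W_1$ and $w_2 \in W_2$; bilinearity then yields $(\gm \otimes b)(U \otimes W_1, U \otimes W_2) = 0$, which is exactly quasilinearity of $\tlq$ on $(U \otimes W_1) \times (U \otimes W_2)$.

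The main subtlety is that the $q$-orthogonality $V = W_1 \perp W_2$ does not by itself force $b(W_1, W_2) = 0$ over a general semiring, where negation is unavailable; this is precisely where the quasiminimal (equivalently, faithful) assumption on $b$ is doing the essential work through \lemref{lem:2.5}. Without it, the cross term $\gm \otimes b$ between $U \otimes W_1$ and $U \otimes W_2$ could be nonzero and the tensor product would fail to split orthogonally, even though the underlying $R$-module decomposition still holds.
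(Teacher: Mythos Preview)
Your proof is correct and follows essentially the same approach as the paper: both deduce $b(W_1,W_2)=0$ from quasiminimality (via \lemref{lem:2.5}), whence $(\gm\otimes b)(U\otimes W_1,U\otimes W_2)=0$, giving the required quasilinearity of $\tlq$. You are simply more thorough than the paper's three-line proof, explicitly constructing a compatible expansion $B$ to check that $\tlq$ restricts to $\gm\otimes_{b_i}q_i$ on each summand---a point the paper leaves implicit.
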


\begin{proof}
$b(W_1,W_2)=0,$ since $b$ is quasiminimal. It follows that
$$(\gm \otimes b)(U\otimes W_1,U\otimes W_2)=0.$$
Thus, $\tlq=\gm \otimes_bq$ is quasilinear on $(U\otimes W_1)\times(U\otimes W_2).$
\end{proof}

\begin{examp}\label{examp:5.21}
Our assumption, that $b$ is faithful, is necessary here. If $V=W_1\perp W_2,$ and $b$ is balanced, but
$b(W_1,W_2)\ne0$, then $$\begin{pmatrix} 0 &1\\ 1 &0\end{pmatrix}\otimes_bV=\begin{bmatrix} 0 & b\\ &0
\end{bmatrix} $$ is not the orthogonal sum of $$ \begin{pmatrix} 0 & 1\\ 1 & 0\end{pmatrix}\otimes_b W_1 \dss{ \text{ and } }
 \begin{pmatrix} 0 & 1\\ 1 & 0\end{pmatrix}\otimes_b W_2.$$
 \end{examp}

 \begin{examp}\label{examp:5.22}
Let $q=[a_1,a_2,\dots,a_n]$ be a diagonal quadratic form. The diagonal symmetric bilinear form
$$b:=\langle 2q_1,\dots,2a_n\rangle$$
is the unique faithful companion of $q.$
For any bilinear $R$-module $(U,\gm )$, we have
\begin{equation}\label{eq:5.18}
\gm \otimes_bq=\gm \otimes[a_1]\perp\cdots \perp\gm \otimes[a_n].
\end{equation}
 \end{examp}
Concerning the forms $\gm \otimes[a_i],$ recall \propref{prop:5.14} and Example \ref{examp:5.15}.

\section{Indecomposability in tensor products}\label{sec:6}

In this section, we assume for simplicity  that $R\setminus\{0\}$
\textbf{is an entire antiring}. So every free $R$-module has unique
base (cf.~\thmref{thm:1.3}), and $R$ has no zero divisors. We
discuss decomposability first in tensor products of (free) bilinear
modules, later in tensor products of bilinear modules with quadratic
modules.

Let $V_1=(V_1,b_1)$ and $V_2=(V_2,b_2)$ be indecomposable free
(symmetric) bilinear modules over $R,$ and let $V:=V_1\otimes
V_2=(V_1\otimes V_2,b)$ with $b:=b_1\otimes b_2.$ We take bases
$\mfB_1$ and $\mfB_2$ of the $R$-modules $V_1,V_2$ respectively and
then have the base $$\mfB=\mfB_1\otimes\mfB_2:=\{\veps\otimes\eta
\ds|\veps\in \mfB_1,\eta\in\mfB_2\}$$ of $V.$ Our task  is to
determine the indecomposable components of $V.$ First we discuss the
``trivial'' cases.

\begin{remark}\label{rem:6.1}
Assume that $V_1$ has dimension ($=$ rank) one, so $V_1\cong\langle a\rangle$ with $a\in R.$ If $a\ne0,$ then $V$ is clearly indecomposable. If $a=0,$ then $b_1\otimes b_2=0,$ whence $V$ is indecomposable only if also $\dim V_2=1.$ Then $V=\langle 0\rangle.$
\end{remark}

In all the following, we assume that $V_1\ne\langle0\rangle,$ $V_2\ne\langle0\rangle.$

We resort to \S\ref{sec:3} to describe bases of the indecomposable
components of $V=(V,b)$ as the classes in $$\mfB=\{\veps\otimes\eta
\ds |\veps\in\mfB_1,\eta\in\mfB_2\}$$ of an equivalence relation
given by ``paths'', cf.~Definition \ref{defn:3.7}. So a path of
length $r\ge1$ in~$V,$ i.e., in $\mfB,$ is a sequence
\begin{equation}\label{eq:6.1}
\Gm=(\veps_0\otimes\eta_0,\veps_1\otimes\eta_1,\dots,\veps_r\otimes\eta_r)
\end{equation}
with
\begin{equation}\label{eq:6.2}
b_1(\veps_i,\veps_{i+1})b_2(\eta_i,\eta_{i+1})\ne0
\end{equation}
and
\begin{equation}\label{eq:6.3}
\veps_i\ne\veps_{i+1}\qquad\text{or}\qquad \eta_i\ne\eta_{i+1}
\end{equation}
for $0\le i\le r-1.$

Let us \textit{first assume that both} $b_1$ \textit{and} $b_2$ \textit{are alternate}, whence  also $b=b_1\otimes b_2$ is alternate. Now condition
\eqref{eq:6.3} is a consequence of \eqref{eq:6.2} and thus can be ignored. We read off from~ \eqref{eq:6.2} that
\begin{equation}\label{eq:6.4}
\Gm_1=(\veps_0,\veps_1,\dots,\veps_r), \quad \Gm_2=(\eta_0,\eta_1,\dots,\eta_r)
\end{equation}
are paths in $V_1$ and $V_2$ respectively of same length $r.$ Conversely, given such paths $\Gm_1$ and $\Gm_2$, they combine to a path $\Gm$ of length $r$ in $V,$ as written in \eqref{eq:6.1}. \{Here we use the assumption that $R$ has no zero divisors.\} We write
\begin{equation}\label{eq:6.5}
\Gm=\Gm_1\otimes\Gm_2.
\end{equation}

We will speak of ``cycles'' in $\mfB_1,$ $\mfB_2,$ $\mfB,$ in the following obvious way:

\begin{defn}\label{defn:6.2} Let $\mfC$ be a base of a free bilinear $R$-module $W.$
\begin{enumerate} \ealph \dispace
  \item
 We denote the length of a path $\Gm$ in $\mfC$ by $\ell(\Gm).$

\item A \bfem{cycle} $\Dl$ in $W$ with base point $\zeta\in\mfC$ is a path $(\zeta_0,\zeta_1,\dots,\zeta_r)$ in $\mfC$ with $\zeta_0=\zeta_r=\zeta.$ We say that the cycle $\Dl$ is \bfem{even} (resp. \bfem{odd}) if $\ell(\Dl)$ is \bfem{even} (resp. \bfem{odd}). We say that $\Dl$ is a \bfem{$2$-cycle} if $\ell(\Dl)=2,$ whence $\Dl=(\zeta,\zeta',\zeta)$ with $(\zeta,\zeta')$ an edge.
\end{enumerate}
\end{defn}

\begin{lemma}\label{lem:6.3}
Let $\veps,\veps'\in\mfB_1$ and $\eta,\eta'\in\mfB_2.$
Let $\Gm_1$ be a path from $\veps$ to $\veps'$ of length $r$ and ~$\Gm_2$ a path from $\eta$ to $\eta'$ of length $s,$ and assume that $r\equiv s\pmod2.$ Then $\veps\otimes \eta\sim \veps'\otimes\eta'.$
\end{lemma}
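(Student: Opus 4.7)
The plan is to reduce the statement to producing a single path in $\mfB$ from $\veps\otimes\eta$ to $\veps'\otimes\eta'$, and then to exploit the crucial observation recorded in \eqref{eq:6.5}: in the present (doubly alternate) setting, a path of length $\ell$ in $\mfB_1$ and a path of length $\ell$ in $\mfB_2$ automatically combine to give a path of length $\ell$ in $\mfB$. The only obstacle to applying this directly is that $\Gm_1$ and $\Gm_2$ may have different lengths $r$ and $s$; the hypothesis $r\equiv s\pmod 2$ is precisely what will let me pad the shorter one to match the longer.

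Without loss of generality I would assume $s\ge r$, so that $s-r$ is a nonnegative even integer. The key padding trick is the following: given any path $(\zeta_0,\zeta_1,\dots,\zeta_\ell)$ in $\mfB_1$ with $\ell\ge 1$, the longer sequence $(\zeta_0,\dots,\zeta_\ell,\zeta_{\ell-1},\zeta_\ell)$ is again a path, because $b_1$ is symmetric and $b_1(\zeta_{\ell-1},\zeta_\ell)\ne 0$ forces $b_1(\zeta_\ell,\zeta_{\ell-1})\ne 0$, while the condition \eqref{eq:6.3} is automatic in the alternate case. This operation fixes both endpoints and raises the length by $2$. Iterating it $(s-r)/2$ times starting from $\Gm_1$ produces a path $\Gm_1'$ of length $s$ in $\mfB_1$ running from $\veps$ to $\veps'$.

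I would then set $\Gm:=\Gm_1'\otimes\Gm_2$, in the sense of \eqref{eq:6.5}. Using that $R$ has no zero divisors, the product $b_1(\veps_i',\veps_{i+1}')\,b_2(\eta_i,\eta_{i+1})$ is nonzero at every step, so $\Gm$ is genuinely a path in $\mfB$ of length $s$ from $\veps\otimes\eta$ to $\veps'\otimes\eta'$. By the equivalence relation of Definition~\ref{defn:3.7}(b), this is exactly the assertion $\veps\otimes\eta\sim\veps'\otimes\eta'$. I do not foresee any serious obstacle here: the one thing requiring attention is the legitimacy of the padding step, and that reduces to the symmetry of $b_1$ together with the absence of zero divisors. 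The parity assumption $r\equiv s\pmod 2$ is used exactly once, in guaranteeing that $(s-r)/2$ is an integer.
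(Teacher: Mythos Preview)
Your proof is correct and follows exactly the paper's approach: assume $s\ge r$, pad $\Gm_1$ with $(s-r)/2$ copies of the $2$-cycle $(\veps_r,\veps_{r-1},\veps_r)$ to obtain a path of length $s$, and then tensor with $\Gm_2$ via \eqref{eq:6.5}. The paper's argument is terser but identical in substance.
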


\begin{proof}
Assume, without loss of generality, that $s\ge r$, whence $s=r+2t$ with $t\ge0.$ If $t=0,$ then $\Gm_1\otimes\Gm_2$ is a path from $\veps\otimes\eta$ to $\veps'\otimes \eta'$ in $V.$ If $t>0$, we replace $\Gm_1=(\veps_0,\veps_1,\dots,\veps_r)$ by
$$\tlGm_1=(\veps_0,\veps_1,\dots,\veps_r,\veps_{r-1},\veps_r,\dots)$$
adjoining $t$ copies of the 2-cycle $(\veps_r,\veps_{r-1},\veps_r)$ to $\Gm_1.$ Now $\tlGm_1\otimes \Gm_2$ runs from $\veps\otimes\eta$ to $\veps'\otimes\eta'.$
\end{proof}

\begin{thm}\label{thm:6.4}
Assume that both $b_1$ and $b_2$ are alternate (and $V_1\ne\langle0\rangle,$ $V_2\ne\langle0\rangle,$ as always).
\begin{enumerate} \dispace
\item[a)] If $V_1$ or $V_2$ contains an odd cycle, then $V_1\otimes V_2$ is indecomposable.
\item[b)] Otherwise $V_1\otimes V_2$ is the orthogonal sum of two indecomposable components.
\end{enumerate}
\end{thm}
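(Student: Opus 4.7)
The plan is to apply Scholium~\ref{schol:3.8}: the indecomposable components of $V=V_1\otimes V_2$ correspond to the equivalence classes of $\mfB=\mfB_1\otimes\mfB_2$ under the path relation of Definition~\ref{defn:3.7}. The text already notes that, since $b_1$ and $b_2$ are alternate and $R$ has no zero divisors, a path of length $r$ in $\mfB$ is exactly a pair $\Gm_1\otimes\Gm_2$ with $\Gm_i$ a path of length $r$ in $\mfB_i$. Combined with Lemma~\ref{lem:6.3} (and its converse, obtained by projecting a path in $V$ to its two coordinates), I obtain the working criterion: $\veps\otimes\eta\sim\veps'\otimes\eta'$ iff there exist paths from $\veps$ to $\veps'$ in $V_1$ and from $\eta$ to $\eta'$ in $V_2$ of the \emph{same} length-parity.

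For (a), suppose without loss of generality that $V_1$ contains an odd cycle $\Dl$ at some base point $\zeta\in\mfB_1$. Given arbitrary $\veps,\veps'\in\mfB_1$, I use indecomposability of $V_1$ to choose a path $\sigma$ from $\veps$ to $\zeta$ and a path $\tau$ from $\zeta$ to $\veps'$; then $\sigma\tau$ and $\sigma\Dl\tau$ are paths from $\veps$ to $\veps'$ of opposite parities. Thus in $V_1$, every pair of base vectors is joined by paths of \emph{both} parities. Given any $\veps\otimes\eta,\veps'\otimes\eta'\in\mfB$, pick any path from $\eta$ to $\eta'$ in $V_2$ (available by indecomposability of $V_2$), then match its parity in $V_1$; by Lemma~\ref{lem:6.3}, $\veps\otimes\eta\sim\veps'\otimes\eta'$. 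Hence $\mfB$ is a single class and $V$ is indecomposable.

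For (b), with every cycle in each $V_i$ even, I define a parity colouring $c_i:\mfB_i\to\BZ/2$ by fixing a base point $\veps_0\in\mfB_1$ (resp.\ $\eta_0\in\mfB_2$) and sending a vertex to the parity of any path from the base point to it. Well-definedness is the step requiring the most care: if $\Gm,\Gm'$ both join $\veps_0$ to $\veps$, then by symmetry of $b_1$ the reversal of $\Gm'$ is again a path, so the concatenation $\Gm\cdot(\Gm')^{\mathrm{rev}}$ is a cycle at $\veps_0$, necessarily even, forcing $\ell(\Gm)\equiv\ell(\Gm')\pmod2$. Since $b_i$ is alternate and $V_i\neq\langle 0\rangle$, each $V_i$ has rank $\geq 2$; combined with indecomposability, this shows each $\mfB_i$ contains elements of both colours. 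Writing $\mfB^\pm=\{\veps\otimes\eta:c_1(\veps)+c_2(\eta)\equiv\pm\pmod 2\}$, both sets are nonempty.

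Finally, I verify that $\mfB^+$ and $\mfB^-$ are precisely the equivalence classes. Any path in $V$ between $\veps\otimes\eta$ and $\veps'\otimes\eta'$ projects to paths of equal length in $V_1$ and $V_2$, so it preserves $c_1+c_2\bmod 2$; thus the two sets cannot merge. Conversely, if $\veps\otimes\eta$ and $\veps'\otimes\eta'$ lie in the same $\mfB^\pm$, then any paths $\Gm_1,\Gm_2$ joining the respective coordinates in $V_1,V_2$ satisfy $\ell(\Gm_1)\equiv c_1(\veps)+c_1(\veps')\equiv c_2(\eta)+c_2(\eta')\equiv\ell(\Gm_2)\pmod 2$, so Lemma~\ref{lem:6.3} yields equivalence. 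The only real obstacle is the well-definedness of the parity colouring, which is handled by the reversal/concatenation argument just described; everything else is bookkeeping in the base $\mfB_1\otimes\mfB_2$.
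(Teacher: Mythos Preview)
Your proof is correct and follows essentially the same route as the paper: both arguments reduce to the parity criterion furnished by Lemma~\ref{lem:6.3} and the observation that paths in $\mfB_1\otimes\mfB_2$ factor as $\Gm_1\otimes\Gm_2$, then split into the odd-cycle case (where one can adjust parity at will) and the bipartite case (where path-parity is a well-defined invariant giving exactly two classes). Your packaging of part~(b) via an explicit $\BZ/2$-colouring $c_1+c_2$ is a little more systematic than the paper's choice of reference vertices $\veps_0\otimes\eta_0$ and $\veps_0\otimes\eta_1$, but the content is identical.
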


\begin{proof}
a): We assume that $V_1$ contains an odd
cycle $\Dl$ with base point $\dl.$ Let $\veps\otimes\eta$ and $\veps'\otimes\eta'$ be different elements of $\mfB.$ We want to verify that $\veps\otimes\eta\sim\veps'\otimes\eta'.$ We choose a path $\Gm_1$ from $\veps$ to $\veps'$ in $V_1$ and a path $\Gm_2$ from $\eta$ to $\eta'$ in $V_2.$ If $\ell(\Gm_1)\equiv\ell(\Gm_2)  \pmod 2$, then we know by \lemref{lem:6.3} that $\veps\otimes\eta\sim\veps'\otimes\eta'.$ Now assume that $\ell(\Gm_1)$ and $\ell(\Gm_2)$ have different parity. We choose a new path $\tlGm_1$ from $\veps$ to $\veps'$ as follows: We first take a path $H$ from $\veps$ to the base point $\dl$ of $\Dl,$ then we run through $\Dl,$ then we take the path inverse to $H$ (in the obvious sense) from $\dl$ to $\veps,$ and finally we run through $\Gm_1.$ The length $\ell(\tlGm_1)$ has different parity than $\ell(\Gm_1)$ and thus the same parity as $\ell(\Gm_2).$ We conclude again that $\veps\otimes\eta\sim\veps'\otimes \eta'.$
\pSkip

b): Now assume that both $V_1$ and $V_2$ contain only even cycles. This means that both in $V_1$ and $V_2$ all paths from a fixed start to a fixed end have length of the same parity. Given $\veps\otimes\eta$ and $\veps'\otimes\eta'$ in $\mfB$, every path $\Gm$ from $\veps\otimes\eta$ to $\veps'\otimes\eta'$ has the shape $\Gm_1\otimes\Gm_2$ with $\Gm_1$ running from $\veps $ to $\veps',$ $\Gm_2$ running from $\eta$ to $\eta'$, and $\ell(\Gm_1)=\ell(\Gm_2).$ Thus, if the paths from $\veps$ to $\veps'$ have length of different parity than those from $\eta$ to $\eta',$ then $\veps\otimes\eta$ cannot be connected to $\veps'\otimes\eta'$ by a path. But $\veps\otimes\eta$ can be connected to $\veps'\otimes\eta'',$ where $\eta''$ arises from $\eta'$ by adjoining an edge at the endpoint of $\eta'$. We fix some $\veps_0\in\mfB_1,$ and $\eta_0,\eta_1\in\mfB _2$ with $b_2(\eta_0,\eta_1)=1.$ Then every element of $\mfB$ can be connected by a path to $\veps_0\otimes\eta_0$ or to $\veps_0\otimes\eta_1,$ but not to both. $V$ has exactly two indecomposable components.
\end{proof}

\begin{remark}\label{rem:6.5}
Assume again that $b_1$ and $b_2$ are alternate and $\mfB_1$ and $\mfB_2$ both contain only even cycles. Let $\veps,\veps'\in\mfB_1$ and $\eta,\eta'\in\mfB_2$, and choose paths $\Gm_1$ from $\veps$ to ~$\veps'$ and $\Gm_2$ from $\eta$ to~ $\eta'.$ As the proof of \thmref{thm:6.4}.b has shown, $\veps\otimes\eta$ and $\veps'\otimes\eta'$ lie in the same indecomposable component of $V_1\otimes V_2$ iff $\ell(\Gm_1)$ and $\ell(\Gm_2)$ have the same parity.
\end{remark}

There remains the case that $b_1$ or $b_2$ is not alternate.

\begin{thm}\label{thm:6.6}
Assume that $b_1$ is \bfem{not} alternate and -- as before -- that $V_1=(V_1,b_1)$ and $V_2=(V_2,b_2)$ are indecomposable. Then $(V_1\otimes V_2,b_1\otimes b_2)$ is indecomposable.
\end{thm}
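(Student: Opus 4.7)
The plan is to verify, via Scholium~\ref{schol:3.8}, that $V=V_1\otimes V_2$ with base $\mfB=\mfB_1\otimes\mfB_2$ has a single equivalence class under the path connectivity relation of Definition~\ref{defn:3.7}. Since $b_1$ is not alternate, fix $\veps^*\in\mfB_1$ with $b_1(\veps^*,\veps^*)\ne0$. The key consequence is that for every pair of distinct $\eta,\eta'\in\mfB_2$ with $b_2(\eta,\eta')\ne 0$, the ordered pair $(\veps^*\otimes\eta,\veps^*\otimes\eta')$ is an edge in $V$: indeed $(b_1\otimes b_2)(\veps^*\otimes\eta,\veps^*\otimes\eta')=b_1(\veps^*,\veps^*)b_2(\eta,\eta')\ne0$ (using that $R$ is entire), and $\veps^*\otimes\eta\ne\veps^*\otimes\eta'$.

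Fix any $\eta_0\in\mfB_2$. The first step, horizontal transport at height $\veps^*$, is to lift any $V_2$-path from $\eta$ to $\eta_0$ (supplied by indecomposability of $V_2$) via the observation above to a $V$-path $\veps^*\otimes\eta_0',\dots,\veps^*\otimes\eta_s'$, showing $\veps^*\otimes\eta\sim\veps^*\otimes\eta_0$ for every $\eta\in\mfB_2$.

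The main step is to show that, for every $\veps\in\mfB_1$ and $\eta\in\mfB_2$, there exists $\mu\in\mfB_2$ with $\veps\otimes\eta\sim\veps^*\otimes\mu$. Take a $V_1$-path $\veps=\veps_0,\veps_1,\dots,\veps_a=\veps^*$. I will lift it to a $V$-path with the prescribed $V_1$-projection by constructing a companion sequence $\mu_0=\eta,\mu_1,\dots,\mu_a$ in $\mfB_2$ with $b_2(\mu_i,\mu_{i+1})\ne0$ for every $i$. Distinctness of consecutive elements of the resulting $V$-sequence is automatic from $\veps_i\ne\veps_{i+1}$, so we may take $\mu_{i+1}=\mu_i$ whenever $b_2(\mu_i,\mu_i)\ne0$. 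Construct the $\mu_i$ greedily: if $b_2(\mu_i,\mu_i)\ne0$, set $\mu_{i+1}:=\mu_i$; otherwise pick any $\mu_{i+1}\ne\mu_i$ in $\mfB_2$ with $b_2(\mu_i,\mu_{i+1})\ne0$. Such a choice always exists: if $\rank V_2=1$ then $b_2(\mu_i,\mu_i)\ne0$ (else $V_2=\langle0\rangle$, which was excluded), so the loop case applies; if $\rank V_2\ge2$, indecomposability of $V_2$ forces every $\mu_i\in\mfB_2$ to have at least one $b_2$-neighbor $\mu\ne\mu_i$, since otherwise $\{\mu_i\}$ would be an equivalence class in $\mfB_2$ and hence an indecomposable component of $V_2$.

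Concatenating the two transports yields $\veps\otimes\eta\sim\veps^*\otimes\mu_a\sim\veps^*\otimes\eta_0$ for every $\veps,\eta$, so $\mfB$ is a single equivalence class and $V$ is indecomposable. The only non-routine point is the greedy construction of the $\mu_i$, where the appeal to indecomposability of $V_2$ (not mere connectedness of some graph, which is tautological) is what rules out a dead end when no $b_2$-loop is available at the current vertex.
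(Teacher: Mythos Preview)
Your proof is correct but follows a genuinely different route from the paper's. The paper reduces to the alternate case: it observes that every path for $(b_1)_{\alt}\otimes(b_2)_{\alt}$ is already a path for $b_1\otimes b_2$, invokes Theorem~\ref{thm:6.4} to conclude that $V$ has at most two components, and then --- only in the residual even-cycle case --- uses a single $\rho\in\mfB_1$ with $b_1(\rho,\rho)\ne0$ to produce one edge $(\rho\otimes\eta_0,\rho\otimes\eta_1)$ bridging the two components, a parity argument showing these lie in different components. Your argument is instead a direct path construction to a fixed anchor $\veps^*\otimes\eta_0$: the ``horizontal'' step lifts $V_2$-paths along the loop at $\veps^*$, and the ``vertical'' step lifts a $V_1$-path $\veps\to\veps^*$ by your greedy choice of second coordinates (loop if $b_2(\mu_i,\mu_i)\ne0$, else step to any $b_2$-neighbour, whose existence you correctly derive from indecomposability of $V_2$ when $\operatorname{rank}V_2\ge2$ and from $V_2\ne\langle0\rangle$ when $\operatorname{rank}V_2=1$). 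Your approach is self-contained and avoids any appeal to Theorem~\ref{thm:6.4} or Remark~\ref{rem:6.5}; the paper's approach is shorter because that machinery is already in place, and it makes the role of parity transparent. Both use exactly the same ingredients from the hypotheses (entirety of $R$, indecomposability of the factors, the exclusion $V_i\ne\langle0\rangle$, and the existence of a non-alternate vertex in $\mfB_1$).
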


\begin{proof}
Every path in $V:=V_1\otimes V_2$ with respect to $(b_1)_{\alt}\otimes(b_2)_{\alt}$ is also a path with respect to $b_1\otimes b_2,$ as is easily checked, and the paths in $V_i$ with respect to $b_i$ are the same as those with respect to $(b_i)_{\alt}$ $(i=1,2).$ Thus we are done by \thmref{thm:6.4}, except in the case that all cycles in $V_1$ and in $V_2$ are even. Then $V$ has two indecomposable components $W',$ $W''$ with respect to $(b_1)_{\alt}\otimes(b_2)_{\alt}.$ The base
$$\mfB=\mfB_1\otimes\mfB_2:=(\veps\otimes\eta \ds|\veps\in\mfB_1,\eta\in\mfB_2)$$
of $V_1\otimes V_2$ is the disjoint union of sets $\mfB',$ $\mfB''$ which are bases of $W'$ and $W''.$ Any two elements of $\mfB'$ are connected by a path with respect to $(b_1)_{\alt}\otimes (b_2)_{\alt},$ hence by a path with respect to $b_1\otimes b_2,$ and the same holds for the set $\mfB''.$

We choose some $\rho\in\mfB_1$ with $b_1(\rho,\rho)\ne0$ and an edge
$(\eta_0,\eta_1)$ in $\mfB''.$ Since $R$ has no zero divisors, it
follows that $(\rho\otimes\eta_0,\rho\otimes\eta_1)$ is an edge in
$\mfB$ with respect to $b_1\otimes b_2.$ Perhaps interchanging $W'$
and $W''$, we assume that $\rho\otimes \eta_0\in\mfB'.$ Suppose that
also $\rho\otimes\eta_1\in\mfB'.$ Then there exists a path $\Gm$ in
$\mfB'$ with respect to $(b_1)_{\alt}\otimes(b_2)_{\alt}$ running
from $\rho\otimes\eta_0$ to $\rho\otimes\eta_1.$ $\Gm$~ has the form
$\Gm_1\otimes\Gm_2$, with $\Gm_1$ a cycle in $V_1$ with base point
$\rho$, and $\Gm_2$ a path in $V_2$ running from $\eta_0$ to
$\eta_1.$ We have $\ell(\Gm_1)=\ell(\Gm_2)$ and $\ell(\Gm_2)$ is
even. But there exists the path $(\eta_0,\eta_1)$ from $\eta_0$ to
$\eta_1$ of length 1. Since all paths in $V_2$ from $\eta_0$ to
$\eta_1$ have the same parity, we infer that $\ell(\Gm_2)$ is odd, a
contradiction.

We conclude that $\rho\otimes\eta_1\in\mfB''.$ The elements $\rho\otimes\eta_0\in\mfB'$ and $\rho\otimes \eta_1\in\mfB''$ are connected by a path with respect to $b_1\otimes b_2,$ and thus all elements of $\mfB$ are connected by paths with respect to $b_1\otimes b_2.$
\end{proof}

Turning to a study of indecomposable components of tensor products of bilinear and quadratic modules, we need some more terminology. Let $V=(V,q)$ be a free quadratic $R$-module and $\mfB$ a base of $V.$ We focus on balanced companions of $q.$

\begin{defn}\label{defn:6.7}
$ $
\begin{enumerate} \ealph \dispace
  \item  We call a companion $b$ of $q$ \bfem{faithful} if $b$ is \bfem{balanced and quasiminimal} (cf.~\S\ref{sec:2} above), whence $b(\veps,\veps)=2q(\veps)$ for all $\veps\in\mfB$ and $b(\veps,\eta)=0$ for
   $\veps\ne \eta$ in $\mfB$ such that $q$ is quasilinear on $R\veps\times R\eta.$

 \item  Given a balanced companion $b$ of $q$, we define a new bilinear form $b_f$ on $V$ by the rule that, for $\veps,\eta\in\mfB,$
$$b_f(\veps,\eta)=\begin{cases}0&\quad\text{if $\veps\ne\eta$ and $q$ is quasilinear on $R\veps\times R\eta$},\\
b(\veps,\eta)&\quad\text{else}.\end{cases}$$

\end{enumerate}
\end{defn}

It is clear from \cite[Theorem 6.3]{QF1} that again $b_f$ is  a
companion of $q.$ By definition, this companion is quasiminimal.
$b_f$ is also balanced, since
$b_f(\veps,\veps)=b(\veps,\veps)=2q(\veps)$ for all $\veps\in\mfB,$
cf.~\cite[Proposition 1.7]{QF1}, and so $b_f$ is faithful. We call
$b_f$ the \bfem{faithful companion of}~$q$ \bfem{associated to} $b.$

\begin{thm}\label{thm:6.8}
Assume that $b$ is a balanced companion of $q,$ and that $W$ is a basic submodule of $V.$ Then $W$ is indecomposable with respect to $q$ iff $W$ is indecomposable with respect to $b_f.$
\end{thm}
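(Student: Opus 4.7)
The plan is to reduce the statement to the equivalence-relation descriptions of indecomposability furnished by Theorem~\ref{thm:2.6} (quadratic case) and Scholium~\ref{schol:3.8} (bilinear case), and then to observe that the two relations on $\mfB_W$ literally coincide. The key point is that the faithful companion $b_f$ of $q$ was tailored precisely to serve simultaneously as a quasiminimal companion of $q$ (so it computes the quadratic equivalence relation) and as a symmetric bilinear form whose alternate form agrees with it off the diagonal (so it computes the bilinear equivalence relation).

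First I would verify that the restriction $b_f|W$ is again a faithful companion of $q|W$. Balancedness is inherited, since $b_f(\veps,\veps)=2q(\veps)$ for every $\veps\in\mfB_W\subset\mfB$. Quasiminimality is inherited as well: for distinct $\veps,\eta\in\mfB_W$, quasilinearity of $q|W$ on $R\veps\times R\eta$ is the same condition as quasilinearity of $q$ there, hence forces $b_f(\veps,\eta)=0$ by the definition of $b_f$. Consequently, according to Theorem~\ref{thm:2.6} applied to $(W,q|W)$, the indecomposable components of $W$ with respect to $q$ are the submodules of $W$ spanned by the equivalence classes of $\mfB_W$ under the relation
$$\veps\sim_q\eta\ \Longleftrightarrow\ \veps=\eta\ \text{or there is a sequence }\veps_0,\dots,\veps_r\ \text{in }\mfB_W\ \text{with }b_f(\veps_i,\veps_{i+1})\ne 0.$$

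Next, by Scholium~\ref{schol:3.8} applied to $(W,b_f|W)$, the indecomposable components of $W$ with respect to $b_f$ are the submodules spanned by the equivalence classes of $\mfB_W$ under the analogous relation with $b_f$ replaced by $(b_f|W)_{\alt}$. But $(b_f|W)_{\alt}(\veps,\eta)=b_f(\veps,\eta)$ for all distinct $\veps,\eta\in\mfB_W$, so the two relations are identical. Hence the indecomposable components of $W$ with respect to $q$ and with respect to $b_f$ coincide, and in particular $W$ is indecomposable with respect to $q$ iff it is indecomposable with respect to $b_f$.

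The only genuinely non-trivial step is the first one, namely the passage of faithfulness to the restriction; once that is in hand, the statement is just the observation that Theorem~\ref{thm:3.9} (or, more precisely, its proof via Proposition~\ref{prop:3.10}) passes verbatim to the basic submodule $W$ using $b_f|W$ in both roles.
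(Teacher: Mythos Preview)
Your proposal is correct and takes essentially the same approach as the paper. The paper's proof is a one-liner: it observes that $b_f|W=(b|W)_f$ is a quasiminimal companion of $q|W$ and then invokes Theorem~\ref{thm:3.9} directly, whereas you spell out the restriction argument and then unroll the proof of Theorem~\ref{thm:3.9} (comparing the equivalence relations from Theorem~\ref{thm:2.6} and Scholium~\ref{schol:3.8}) rather than simply citing it.
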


\begin{proof} This is a special case of \thmref{thm:3.9}, since $b_f|W=(b|W)_f$ is a quasiminimal companion of $q|W.$
\end{proof}

\begin{defn}\label{defn:6.9} $ $

 \begin{enumerate} \ealph \dispace
  \item We say that $q$ is \bfem{diagonally zero} if $q(\veps)=0$ for every $\veps\in\mfB.$

 \item We say that $q$ is \bfem{anisotropic} if $q(\veps)\ne0$ for every $\veps\in\mfB.$
     \end{enumerate} \ealph \dispace

\end{defn}

\begin{remarks}\label{remarks:6.10}
$ $

\begin{enumerate} \eroman  \dispace
\item
If $q$ is  diagonally zero, then $q$ is rigid,
cf.~\cite[Proposition 3.4]{QF1}. Conversely, if $q$ is rigid and the
quadratic form $[1]$ is quasilinear, i.e.,
$(\alpha+\beta)^2=\alpha^2+\beta^2$ for any $\alpha,\beta\in R,$
then $q$ is diagonally zero, as proved in \cite[Theorem 3.5]{QF1}.

\item  If $q$ is anisotropic, then $q(x)\ne0$ for every $x\in V\setminus\{0\}.$ So our definition of anisotropy here coincides with the usual meaning of anisotropy for quadratic forms (which makes sense, say, for $R$ a semiring without zero divisors and $V$ any $R$-module).
\end{enumerate}

\end{remarks}

\begin{defn}\label{defn:6.11}
In a similar vein, we call a symmetric \bfem{bilinear form} $b$ on $V$ \bfem{anisotropic} if $b(\veps,\veps)\ne0$ for every $\veps\in\mfB,$ and then have $b(x,x)\ne0$ for every $x\in V\setminus\{0\}$.
\end{defn}
\noindent  Note that, if $b$ is a balanced companion of $q$, then $b$ is anisotropic iff $q$ is anisotropic.

Assume now that $U:=(U,\gm )$ is a free bilinear module, $V:=(V,q)$ is a free quadratic module, and $b$ is a balanced companion of $q.$ Let
$$\tlV:=(\tlV,\tlq):=(U\otimes V,\gm \otimes_bq).$$
We want to determine the indecomposable components of $\tlV.$ Discarding trivial cases, we assume that $U\ne\langle 0\rangle ,$ $V\ne[0].$

We choose bases $\mfB_1$ and $\mfB_2$ of the $R$-modules $U$ and $V,$ respectively, and introduce the subsets
\begin{align*}
 \mfB_1^+&:=\{\veps\in\mfB_1 \ds|\gm (\veps,\veps)\ne0\},\\
 \mfB_1^0&:=\{\veps\in\mfB_1 \ds|\gm (\veps,\veps)=0\},\\
 \mfB_2^+&:=\{\eta\in\mfB_1 \ds|q(\eta)\ne0\},\\
 \mfB_2^0&:=\{\eta\in\mfB_1\ds|q(\eta)=0\},
\end{align*}
of $\mfB_1$ and $\mfB_2,$ respectively, and furthermore the basic
submodules $U^+,U^0,V^+,V^0$ respectively spanned by these sets.

\begin{lem}\label{lem:6.13}
\begin{enumerate} \dispace \item[a)] If $\veps\in\mfB_1^+,$ then the indecomposable components of the basic submodule $\veps\otimes V:=(R\veps)\otimes V$ of $U\otimes V$ with respect to $\tlq$ are the submodules $\veps\otimes W$ with $W$ running through the indecomposable components of $V$ with respect to $q.$

\item[b)] If $\eta\in\mfB_1^+,$ then the indecomposable components of $U\otimes \eta:=U\otimes (R\eta)$ with respect to~ $\tlq$ are the modules $U\otimes \eta$ with $U'$ running through the indecomposable components of $U$ with respect to the norm form $\eta(\gm )$ of $\gm $ (cf.~Definition~\ref{defn:5.12}).
    \end{enumerate}
    \end{lem}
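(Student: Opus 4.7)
My approach is to identify each of the basic submodules $\veps \otimes V$ and $U \otimes \eta$ with a rescaled copy of a known free quadratic module, and then invoke a scaling-invariance principle for indecomposable components. The key auxiliary observation is the following \emph{scaling lemma}: if $(V',q')$ is a free quadratic $R$-module with unique base and $\lambda \in R \setminus \{0\}$, then $(V', q')$ and $(V', \lambda q')$ have exactly the same indecomposable basic submodules. Indeed, by \thmref{thm:2.6} the indecomposable components are determined by the equivalence relation on the base whose edges are the pairs $\veps \neq \veps'$ on which $q'$ is \emph{not} quasilinear on $R\veps \times R\veps'$. Picking any companion $b'$ of $q'$, this non-quasilinearity amounts to $b'(\veps,\veps') \neq 0$, while $\lambda b'$ is a companion of $\lambda q'$ and, since $R$ has no zero divisors, $b'(\veps,\veps')\neq 0 \Leftrightarrow \lambda b'(\veps,\veps') \neq 0$. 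Thus the same edges appear, the same equivalence classes arise, and the indecomposable components coincide.

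For part (a), the $R$-linear map $V \to \veps \otimes V$, $y \mapsto \veps \otimes y$, is an isomorphism of $R$-modules carrying the base $\mfB_2$ of $V$ bijectively onto the base $\{\veps \otimes \eta : \eta \in \mfB_2\}$ of $\veps \otimes V$. By \eqref{eq:5.4} we have $\tlq(\veps \otimes y) = \gm(\veps,\veps)\, q(y) = \lambda\, q(y)$ with $\lambda := \gm(\veps, \veps) \neq 0$. Hence $(\veps \otimes V, \tlq|_{\veps \otimes V})$ is isometric, via this identification, to $(V, \lambda q)$. The scaling lemma then says that the indecomposable components of $(V, \lambda q)$ are precisely those of $(V, q)$, which translate back to the submodules $\veps \otimes W$ with $W$ an indecomposable component of $V$ with respect to $q$.

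For part (b), the analogous map $U \to U \otimes \eta$, $x \mapsto x \otimes \eta$, is an $R$-linear isomorphism carrying $\mfB_1$ onto the natural base of $U \otimes \eta$. By \eqref{eq:5.4} and Definition~\ref{defn:5.12}, $\tlq(x \otimes \eta) = \gm(x,x)\, q(\eta) = \mu\, n(\gm)(x)$ with $\mu := q(\eta) \neq 0$, so $(U \otimes \eta, \tlq|_{U \otimes \eta})$ is isometric to $(U, \mu \cdot n(\gm))$. Applying the scaling lemma with $\lambda = \mu$, the indecomposable components of $(U, \mu\, n(\gm))$ coincide with those of $(U, n(\gm))$, and translating back they correspond to the submodules $U' \otimes \eta$ with $U'$ an indecomposable component of $(U, n(\gm))$. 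The only mildly delicate point in the whole argument — and it is where the hypothesis that $R \sm \{0\}$ is an entire antiring gets used — is the no-zero-divisor step in the scaling lemma, ensuring that rescaling by $\gm(\veps, \veps)$ or $q(\eta)$ neither creates nor destroys edges in the graph on the base.
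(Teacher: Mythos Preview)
Your approach matches the paper's: identify $\veps \otimes V$ (resp.\ $U \otimes \eta$) with $(V,\gm(\veps,\veps)\,q)$ (resp.\ $(U,q(\eta)\,n(\gm))$) via formula~\eqref{eq:5.4}, and then invoke invariance of the indecomposable components under scaling by a nonzero element of $R$. The paper compresses all of this into a single sentence; you have made the scaling step explicit as a separate lemma.

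One correction and one caution. The equivalence ``$q'$ is not quasilinear on $R\veps \times R\veps'$ $\Leftrightarrow$ $b'(\veps,\veps') \neq 0$'' holds only when $b'$ is a \emph{quasiminimal} companion of $q'$, not for an arbitrary companion; for a general companion only the implication ``not quasilinear $\Rightarrow b'(\veps,\veps')\neq 0$'' is valid. This is not merely cosmetic, because your next step reads off the edges of $\lambda q'$ from $\lambda b'$: that is legitimate only if $\lambda b'$ is again quasiminimal for $\lambda q'$, which amounts to the implication ``$q'$ not quasilinear on $R\veps\times R\veps'$ $\Rightarrow$ $\lambda q'$ not quasilinear on $R\veps\times R\veps'$''. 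Over an arbitrary entire antiring this implication can fail, since multiplication by $\lambda$ may collapse two distinct values of $q'$; so your scaling lemma is not literally true without some extra hypothesis (e.g.\ multiplicative cancellativity in $R$). The paper's one-line proof glosses over exactly the same point, so this is a subtlety shared with the paper rather than a divergence from it.
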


    \begin{proof} This follows from the formulas $\tlq(\veps\otimes y)=\gm (\veps,\veps)q(y)$ for $y\in V$ and $\tlq(x\otimes\eta)=\gm (x,x)q(\eta)$ for $x\in U$ (cf.~\eqref{eq:5.4}), since $\gm (\veps,\veps)\ne0$, $q(\eta)\ne0.$ \end{proof}

    \begin{lem}\label{lem:6.14} Assume that $(V,q)$ is indecomposable. Let $a,c\in R\setminus\{0\}.$
    Then
    $$\begin{pmatrix} a& c\\ c &0\end{pmatrix}\otimes_bV=\begin{bmatrix}[c|c] aq &cb\\\hline &0\end{bmatrix}$$
    (cf.~\eqref{eq:5.10}) is indecomposable.
    \end{lem}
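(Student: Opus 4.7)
The plan is to show that the graph on the base $\mfB=\{\veps_i\otimes\eta_j : i=1,2,\ j=1,\dots,n\}$ of $\tlV:=U\otimes_b V$, with respect to a faithful companion of $\tlq:=\gm\otimes_b q$, is connected; this forces $\tlV$ to have a single indecomposable component. The first step is \lemref{lem:6.13}(a) applied to $\veps_1$, which lies in $\mfB_1^+$ since $\gm(\veps_1,\veps_1)=a\ne 0$: the indecomposable components of the basic submodule $\veps_1\otimes V$ are exactly the $\veps_1\otimes W$ with $W$ an indecomposable component of $V$. As $V$ is indecomposable, so is $\veps_1\otimes V$, and hence all the base vectors $\veps_1\otimes\eta_1,\dots,\veps_1\otimes\eta_n$ already lie in a common equivalence class $C$ of the tensor graph.

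The second step is to connect every $\veps_2\otimes\eta_k$ to $C$ by a single edge. When $q(\eta_k)\ne 0$, I take the partner $\veps_1\otimes\eta_k$: rewriting $r\veps_2\otimes\eta_k+s\veps_1\otimes\eta_k=(r\veps_2+s\veps_1)\otimes\eta_k$ and applying \eqref{eq:5.4} gives $\tlq\bigl((r\veps_2+s\veps_1)\otimes\eta_k\bigr)=(s^2a+2rsc)\,q(\eta_k)$, whereas $r^2\tlq(\veps_2\otimes\eta_k)+s^2\tlq(\veps_1\otimes\eta_k)=s^2a\,q(\eta_k)$; the residual term $2rsc\,q(\eta_k)$ is nonzero (since $R\sm\{0\}$ is an entire antiring) and obstructs quasilinearity. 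When $q(\eta_k)=0$, I pass to the faithful variant $b_f$ of $b$ (Definition~\ref{defn:6.7}(b))---which does not alter $\tlq$ on pure tensors---and use the indecomposability of $V$ together with \thmref{thm:3.9} to find some $l\ne k$ with $b_f(\eta_k,\eta_l)\ne 0$; then $(\gm\otimes b_f)(\veps_2\otimes\eta_k,\veps_1\otimes\eta_l)=c\,b_f(\eta_k,\eta_l)\ne 0$, and the same scaling argument yields an edge. The rank-one special case $n=1$ is subsumed in the first case, since $V\ne[0]$ forces $q(\eta_1)\ne 0$.

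The delicate point, which I expect to take the most care, is turning the nonvanishing of a balanced companion's cross term into a genuine failure of quasilinearity of $\tlq$ on the relevant pair: in a general semiring the equation $\tlq(rx+sy)=r^2\tlq(x)+s^2\tlq(y)$ can hold functionally even when a particular companion is nonzero on $(x,y)$, because of additive absorption. The standing hypothesis that $R\sm\{0\}$ be an entire antiring---exactly the setting already used for \lemref{lem:2.5} and \lemref{lem:3.6} and reflected in \thmref{thm:6.8}---prevents such absorption by allowing one to scale $r$ until the cross term dominates, so working with the faithful companion $(\gm\otimes b)_f$ in place of $\gm\otimes b$ reduces the edge-test to the nonvanishing check carried out above. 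Combining the two steps gives $C\supseteq\veps_1\otimes V$ and $C\ni\veps_2\otimes\eta_k$ for every $k$, whence $C=\mfB$ and $\tlV$ is indecomposable.
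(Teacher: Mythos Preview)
Your overall strategy matches the paper's: first use \lemref{lem:6.13}(a) with $\veps_1\in\mfB_1^+$ to put all $\veps_1\otimes\eta_j$ into one equivalence class, then connect each $\veps_2\otimes\eta_k$ to that class. The gap is in your third paragraph. The hypothesis that $R\setminus\{0\}$ is closed under addition and multiplication does \emph{not} prevent additive absorption, and so does not let you ``scale $r$ until the cross term dominates''. The Boolean semiring $\{0,1\}$ (with $1+1=1$) is an entire antiring, yet there $a+\mu c=a$ for every $\mu$ once $a=1$; bounded supertropical semirings give similar counterexamples. Lemmas~\ref{lem:2.5} and~\ref{lem:3.6} do not help: they translate disjoint orthogonality into vanishing of a \emph{quasiminimal} companion, but give no mechanism for showing that a given nonzero value of $\gm\otimes b$ survives passage to $(\gm\otimes b)_f$. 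So as written, the step ``nonzero cross term $\Rightarrow$ quasilinearity fails'' is unjustified.

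The paper closes exactly this gap by invoking the property \NQL\ (formulated just after the lemma): for all $a',c'\in R\setminus\{0\}$ there exists $\mu\in R$ with $a'+\mu c'\neq a'$, which is precisely the statement that every rank-two module $\left[\begin{smallmatrix}a'&c'\\&0\end{smallmatrix}\right]$ with $c'\neq 0$ is not quasilinear. With \NQL\ in hand the paper argues uniformly, without your case split on $q(\eta_k)$: given $\veps_2\otimes\eta_j$, indecomposability of $(V,q)$ yields some $i\neq j$ with $b(\eta_i,\eta_j)\neq 0$, and the restriction of $\tlq$ to $R(\veps_1\otimes\eta_i)+R(\veps_2\otimes\eta_j)$ is $\left[\begin{smallmatrix}aq(\eta_i)&cb(\eta_i,\eta_j)\\&0\end{smallmatrix}\right]$, hence indecomposable by \NQL, so $\veps_1\otimes\eta_i\sim\veps_2\otimes\eta_j$. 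Your two-case argument also goes through once \NQL\ is assumed (in your first case the relevant $2\times 2$ form is $\left[\begin{smallmatrix}aq(\eta_k)&2cq(\eta_k)\\&0\end{smallmatrix}\right]$, with $2cq(\eta_k)\neq 0$ since $2\neq 0$ in an antiring), but the appeal to ``entire antiring'' must be replaced by an appeal to \NQL.
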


    \begin{proof} Let $$(U,\gm )=\begin{pmatrix} a &c\\ c& 0\end{pmatrix}$$ with respect to a base $\veps_1,\veps_2$  and assume for notational convenience that $V$ has a finite base $\eta_1,\dots,\eta_n.$ By \lemref{lem:6.13}.a, we have
    $$\veps_1\otimes\eta_1\sim\veps_1\otimes\eta_2\sim\cdots\sim\veps_1\otimes\eta_n.$$

    For given $\veps_1\otimes\eta_i,$ $\veps_2\otimes \eta_j$ with $i\ne j,$ $\gm \otimes_bq$ has the value table
    $$\begin{bmatrix} aq(\eta_i) & cb(\eta_i,\eta_j)\\
    & 0\end{bmatrix}.$$

Starting with $\veps_2\otimes\eta_j$, we find some $\eta_i,$ $i\ne j,$ with $b(\eta_i,\eta_j)\ne0,$ because $(V,q)$ is indecomposable. Since $R$ has \NQL, it follows that $R(\veps_i\otimes \eta_i)+R(\veps_j\otimes\eta_j)$ is indecomposable with respect to $\tlq,$ whence $\veps_1\otimes\eta_i\sim\veps_2\otimes\eta_j.$ Thus all $\veps_k\otimes\eta_\ell$ are equivalent.
\end{proof}

In order to avoid certain pathologies concerning indecomposability
in tensor products $U\otimes_bV,$ we henceforth will assume that our
semiring has the following property:
\begin{equation} \text{For any}\  a \ \text{and}\  c \ \text{in}\  R\setminus\{0\} \ \text{there exists some}\  \mu\in R \ \text{with} \ a+\mu c\ne a.  \tag{\NQL}
\end{equation}
Clearly, this property means that every free quadratic module
$\left[\begin{smallmatrix} a &c\\ &0\end{smallmatrix} \right]$ with
$c\ne0$ is \bfem{not quasilinear} on $(R\eta_1)\times (R\eta_2),$
where $(\eta_1,\eta_2)$ is the associated base, whence the label
``\NQL''.

\begin{examples}\label{examps:6.12}  $ $
\begin{enumerate} \ealph \dispace

\item In the important case that $R$ is supertropical
the condition (\NQL) holds iff all principal ideals in $eR$ are
unbounded with respect to the total ordering of $eR.$ In particular,
the ``multiplicatively unbounded supertropical semirings'' appearing
in \cite[\S4]{QF1} have \NQL.

%\item If $(R_i\ds|i\in I)$ is a family of semirings with \NQL, the $\prod\limits_{i\in I} R_i$ has again \NQL.

 \item If $R$ is any entire antiring, then the polynomial ring $R[t]$ in one variable (and so in any set
of variables) has \NQL.

 \item The polynomial function semirings over supersemirings appearing in \cite[\S4]{IR1} have \NQL.

  \end{enumerate}
\end{examples}

\begin{lem}\label{lem:6.15} Assume that $(U,n(\gm ))$ is  indecomposable.
Let $a,c\in R\setminus\{0\}.$ Then the tensor product $U\otimes_b\begin{bmatrix} a& c\\ & 0\end{bmatrix},$ taken with respect to $b=\begin{pmatrix} 2a & c\\ c &0\end{pmatrix},$ is indecomposable.
\end{lem}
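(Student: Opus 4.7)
By formula \eqref{eq:5.16}, with respect to the ordered base $(\veps_1\otimes\eta_1,\dots,\veps_m\otimes\eta_1,\veps_1\otimes\eta_2,\dots,\veps_m\otimes\eta_2)$ of $\tlV:=U\otimes V$ (where $\veps_1,\dots,\veps_m$ is a base of $U$), the form $\tlq:=\gm\otimes_b q$ reads
$$\tlq=\begin{bmatrix}[c|c] a\,n(\gm) & c\gm\\ \hline & 0\end{bmatrix}.$$
The plan is to show that, with respect to a faithful companion of $\tlq$, all base vectors of $\tlV$ lie in a single equivalence class of the connection relation of Definition \ref{defn:3.7}; indecomposability of $\tlV$ then follows at once.

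The first step isolates the basic submodule $U\otimes\eta_1$: since $q(\eta_1)=a\ne 0$ we have $\eta_1\in\mfB_2^+$, so \lemref{lem:6.13}(b) identifies the indecomposable components of $(U\otimes\eta_1,\tlq|_{U\otimes\eta_1})$ with those of $(U,n(\gm))$, and the hypothesis forces all $\veps_i\otimes\eta_1$ to be pairwise equivalent in $\tlV$. The second step connects each $\veps_j\otimes\eta_2$ to this pool. Writing $\gm_{ij}:=\gm(\veps_i,\veps_j)$, two cases arise. If $\gm_{jj}\ne 0$, the restriction of $\tlq$ to the basic submodule $R(\veps_j\otimes\eta_1)+R(\veps_j\otimes\eta_2)$ is the triangular scheme $\bigl[\begin{smallmatrix} a\gm_{jj} & c\gm_{jj}\\ & 0\end{smallmatrix}\bigr]$, whose top-row entries both lie in $R\sm\{0\}$, and the \NQL{} axiom forces failure of quasilinearity, giving $\veps_j\otimes\eta_1\sim\veps_j\otimes\eta_2$. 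If $\gm_{jj}=0$, the indecomposability of $(U,n(\gm))$ supplies some $\veps_k\in\mfB_1$ with $\gm_{jk}\ne 0$; the restriction of $\tlq$ to $R(\veps_k\otimes\eta_1)+R(\veps_j\otimes\eta_2)$ then reads $\bigl[\begin{smallmatrix} a\gm_{kk} & c\gm_{jk}\\ & 0\end{smallmatrix}\bigr]$ with $c\gm_{jk}\ne 0$, and non-quasilinearity follows either from \NQL{} (if $\gm_{kk}\ne 0$) or at once by evaluation at $\lambda=\mu=1$ (if $\gm_{kk}=0$, since then $c\gm_{jk}$ cannot be absorbed into the identically zero diagonal). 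Either way $\veps_j\otimes\eta_2$ is connected to $\veps_k\otimes\eta_1\in U\otimes\eta_1$, so combining the two steps forces $\tlV$ to have exactly one indecomposable component.

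The subtle point is the case $\gm_{jj}=0$: the natural ``vertical'' pair $(\veps_j\otimes\eta_1,\veps_j\otimes\eta_2)$ is itself quasilinear, so the connection must be routed through a neighbour of $\veps_j$ in the graph attached to $(U,n(\gm))$, and that neighbour may itself be diagonally zero in $\gm$. This is why the non-quasilinearity argument splits into an \NQL{}-driven and an absorption-driven subcase, closely paralleling the structure of the proof of \lemref{lem:6.14}.
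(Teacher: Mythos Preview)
Your argument is correct, but you missed the two-line shortcut the paper takes. The paper simply invokes the swap formula \eqref{eq:5.17},
\[
\gm\otimes_b\begin{bmatrix}a&c\\&0\end{bmatrix}=\begin{pmatrix}a&c\\c&0\end{pmatrix}\otimes_{2\gm}n(\gm),
\]
and then applies \lemref{lem:6.14} verbatim with $(V,q):=(U,n(\gm))$. That is the whole proof.

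What you do instead is carry out, by hand, the mirror image of the argument for \lemref{lem:6.14}: you use \lemref{lem:6.13}(b) rather than (a) to handle the block $U\otimes\eta_1$, and then connect each $\veps_j\otimes\eta_2$ across via a suitable $2\times 2$ restriction. This is perfectly valid and even slightly more explicit than the paper's route---in particular, you correctly separate the $\gm_{kk}\ne0$ subcase (where \NQL{} is needed) from the $\gm_{kk}=0$ subcase (where non-quasilinearity is immediate), a distinction the paper glosses over inside the proof of \lemref{lem:6.14}. The cost is that you reprove \lemref{lem:6.14} in parallel rather than reusing it; the benefit is that your argument does not rely on the somewhat special identity~\eqref{eq:5.17}. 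Both proofs carry the same tacit nondegeneracy assumption ($\gm\ne0$, equivalently $\dim U\ge 2$ or $\gm_{11}\ne0$) needed to produce the neighbour $\veps_k$.
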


\begin{proof} By formula \eqref{eq:5.17}
$$\gm \otimes_b\begin{bmatrix}a & c\\ & 0\end{bmatrix}=\begin{pmatrix} a & c\\ c &0\end{pmatrix}\otimes_{2\gm }n(\gm ).$$
Now \lemref{lem:6.14} with $(V,q):=(U,n(\gm ))$ gives the claim.
\end{proof}

We are ready for the main result of this section. Recall that $U:=(U,\gm )$.

\begin{thm}\label{thm:6.16}
Assume that $R$ has \NQL. Assume furthermore that both $(U,n(\gm ))$
and the quadratic free module $V=(V,q)$ are indecomposable, and
$U\ne\langle0\rangle,$ $V\ne[0].$ Assume moreover that  is
indecomposable. Let $b$ be a balanced companion of $q.$ Then the
quadratic module $U\otimes_bV:=(U\otimes V,\gm \otimes_bq)$ is
indecomposable, except in the case that $\gm$ is alternate, $q$ is
diagonally zero, $U$ and $V$ contain only even cycles with respect
to $\gm $ and $b.$ Then $U\otimes_bV$ has exactly two indecomposable
components, and these coincide with the indecomposable components of
$U\otimes V$ with respect to $\gm \otimes b,$ and also with respect
to $\gm \otimes b_f.$
\end{thm}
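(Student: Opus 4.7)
My plan is to describe the indecomposable components of $\tlV = (U \otimes V, \tlq) = U \otimes_b V$ via the connectivity of a graph on $\mfB_1 \otimes \mfB_2$ whose edges join distinct base elements $x = \veps \otimes \eta$ and $y = \veps' \otimes \eta'$ for which $\tlq$ is \emph{not} quasilinear on $R x \times R y$. Using $\tlb := \gm \otimes b$ (a balanced companion of $\tlq$ by \propref{prop:5.3}), the relevant $2$-dimensional form reads
\[
\tlq(\lm x + \mu y) \;=\; \lm^2 A + \mu^2 D + \lm \mu C,
\]
with $A = \gm(\veps,\veps) q(\eta)$, $D = \gm(\veps',\veps') q(\eta')$, and $C = \gm(\veps,\veps') b(\eta,\eta')$. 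I split $\mfB_i = \mfB_i^+ \cup \mfB_i^0$ as defined before \lemref{lem:6.13}, and combine \lemref{lem:6.13} on the ``plus'' regions with a zigzag construction, fuelled by the \NQL\ hypothesis, to reach these regions from $\mfB_1^0 \times \mfB_2^0$.

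Assume first we are not in the exceptional case, so by symmetry $\mfB_1^+ \neq \emptyset$. I fix $\veps_0 \in \mfB_1^+$: by \lemref{lem:6.13}(a), $\veps_0 \otimes V$ is indecomposable, and I let $C$ denote its indecomposable component in $\tlV$. If in addition $\mfB_2^+ \neq \emptyset$ I fix $\eta_0 \in \mfB_2^+$, so that \lemref{lem:6.13}(b) makes $U \otimes \eta_0$ indecomposable and the overlap at $\veps_0 \otimes \eta_0$ forces $U \otimes \eta_0 \subset C$. Whenever $\veps \in \mfB_1^+$ (resp.\ $\eta \in \mfB_2^+$), \lemref{lem:6.13}(a) (resp.\ (b)) together with the overlap places $\veps \otimes \eta \in C$. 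The non-trivial sub-case is $\veps \in \mfB_1^0$, $\eta \in \mfB_2^0$. I pick a shortest path $\veps = \veps^{(0)}, \dots, \veps^{(r)}$ in $(U,\gm)$ terminating in $\mfB_1^+$, so $\veps^{(i)} \in \mfB_1^0$ for all $i < r$. Since $V \neq [0]$ and $\eta \in \mfB_2^0$ force $|\mfB_2| \geq 2$, indecomposability of $(V,q)$ provides a neighbor $\eta'$ of $\eta$; I then set $\eta^{(i)} := \eta$ for $i$ even and $\eta^{(i)} := \eta'$ for $i$ odd. At each step the coupling $C_i = \gm(\veps^{(i)},\veps^{(i+1)}) \, b(\eta^{(i)},\eta^{(i+1)})$ is nonzero; for $i < r - 1$ both $A_i = D_i = 0$ (since $\veps^{(i)}, \veps^{(i+1)} \in \mfB_1^0$), making $\tlq$ trivially non-quasilinear on the $2$-dimensional submodule; for $i = r - 1$ we still have $A_{r-1} = 0$ but $D_{r-1}$ may be nonzero, in which case \NQL\ applied to $(a,c) = (D_{r-1}, C_{r-1})$ produces some $\lm$ with $D_{r-1} + \lm C_{r-1} \neq D_{r-1}$, ruling out quasilinearity. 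Each step is therefore an edge of the faithful-companion graph, so we arrive at $\veps^{(r)} \otimes \eta^{(r)}$ inside the indecomposable basic submodule $\veps^{(r)} \otimes V$; this submodule meets $C$, either directly through $\veps^{(r)} \otimes \eta_0$ if $\mfB_2^+ \neq \emptyset$, or via a further zigzag from $\veps^{(r)}$ to $\veps_0$ in $(U,\gm)$ whose every $2$-dimensional step already has $A = D = 0$ because $q$ is now diagonally zero.

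In the remaining case, $\gm$ is alternate and $q$ is diagonally zero. Then $q$ is rigid by Remark~\ref{remarks:6.10}(i), so $b = b_f$ is its unique (and faithful) companion. Moreover $\tlq(\veps \otimes \eta) = \gm(\veps,\veps) q(\eta) = 0$ on the base, so $\tlq$ is itself diagonally zero, hence rigid, with $\tlb = \gm \otimes b$ as its unique (and faithful) companion. Consequently the indecomposable components of the quadratic module $\tlV$ coincide with those of the bilinear module $(U \otimes V, \gm \otimes b) = (U \otimes V, \gm \otimes b_f)$. Since $b(\eta,\eta) = 2 q(\eta) = 0$, both $\gm$ and $b$ are alternate, and both $(U,\gm)$ and $(V,b)$ are indecomposable (the latter via \thmref{thm:3.9}). \thmref{thm:6.4} then yields the dichotomy: if $U$ or $V$ contains an odd cycle, $\tlV$ is indecomposable; otherwise $\tlV$ has exactly two indecomposable components, which is the stated exception.

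The hard part is the zigzag in the non-exceptional case. The shortest-path device is essential: it guarantees that every step before the last has both $A_i = D_i = 0$, so that non-quasilinearity is automatic from $C_i \neq 0$, while at the very last step exactly one of $A_{r-1}, D_{r-1}$ vanishes, which is precisely the regime in which \NQL\ rules out quasilinearity. Without this control, the ``$A, D$ both nonzero'' case could arise, and \NQL\ alone is insufficient to conclude non-quasilinearity of $\lm^2 A + \mu^2 D + \lm \mu C$ there.
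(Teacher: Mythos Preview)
Your proof is correct and follows the paper's three-way case split (anisotropic base vectors exist in $U$; in $V$; in neither), with the rigid case handled identically via \thmref{thm:6.8} and \thmref{thm:6.4}. The difference lies in how you treat the first two cases. The paper, in its Case~2 (your main case $\mfB_1^+\neq\emptyset$), moves a single step in~$V$: for $\eta\in\mfB_2^0$ it picks one neighbour $\eta'$ with $b(\eta,\eta')\neq 0$, sets $a=q(\eta')$, and invokes \lemref{lem:6.15} to declare the whole block $U\otimes_b(R\eta'+R\eta)$ indecomposable; Case~1 is handled symmetrically via \lemref{lem:6.14}. You instead bypass these lemmas entirely and build an explicit chain of rank-two edges: a shortest $\gm$-path from $\veps\in\mfB_1^0$ into $\mfB_1^+$, against which you oscillate in $\mfB_2$.

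The shortest-path device is where your argument is genuinely sharper. It forces $A_i=D_i=0$ at every intermediate step, so \NQL\ is invoked only once, at the terminal step where exactly one diagonal value turns nonzero---precisely the regime $\left[\begin{smallmatrix} a & c\\ & 0\end{smallmatrix}\right]$ with $a,c\neq 0$ that \NQL\ was designed for. By contrast, the paper's one-step move feeds $a=q(\eta')$ (resp.\ $a=\gm(\veps',\veps')$) into Lemmas~\ref{lem:6.15} and~\ref{lem:6.14}, whose hypotheses literally require $a\neq 0$, though the neighbour chosen need not be anisotropic; your approach sidesteps this issue. One cosmetic remark: your appeal to ``symmetry'' between $\mfB_1^+\neq\emptyset$ and $\mfB_2^+\neq\emptyset$ is valid but not literal, since $U$ carries a bilinear form and $V$ a quadratic one. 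It is worth saying explicitly that a $q$-edge $(\eta,\eta')$ forces $b(\eta,\eta')\neq 0$ for \emph{any} companion $b$ (else $q$ would be quasilinear on $R\eta\times R\eta'$), which is what makes $C_i\neq 0$ in the mirror argument, and that the ``further zigzag'' connecting $\veps^{(r)}\otimes V$ to $\veps_0\otimes V$ when $\mfB_2^+=\emptyset$ is also what connects $\veps\otimes V$ to $C$ in your sub-case $\veps\in\mfB_1^+$.
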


\begin{proof} Of course, indecomposability
of $(U,n(\gm ))$ implies indecomposability of $(U,\gm ).$
As before, let $\tlq:=\gm \otimes_bq.$ We distinguish three cases.

1) Assume that $V^+\ne\{0\}$, i.e., there exist anisotropic base vectors in $V.$ Our claim is that all elements of $\mfB_1\otimes\mfB_2$ are equivalent, whence $U\otimes_bV$ is indecomposable.

We choose $\eta_0\in\mfB_2^+.$ By \lemref{lem:6.13}.b, the module
 $(U\otimes\eta_0,\tlq):=(U\otimes\eta_0,\tlq\ds|U\otimes\eta_0)$  is indecomposable, and thus all elements of $\mfB_1\otimes\eta_0$ are equivalent.

Let $\veps\otimes\eta\in\mfB_1\otimes\mfB_2.$ We verify the equivalence of $\veps\otimes\eta$ with some element of $\mfB_1\otimes\eta_0,$ and then will be done. If $\gm (\veps,\veps)\ne0$, then by \lemref{lem:6.13}.a, all elements of $\veps\otimes\mfB_2$ are equivalent, whence $\veps\otimes\eta\sim \veps\otimes\eta_0.$ Assume now that $\gm (\veps,\veps)=0.$ Since $(U,\gm )$ is indecomposable, there exists some $\veps'\in\mfB_1$ with $c:=\gm (\veps',\veps)\ne0.$ Let $a:=\gm (\veps',\veps').$ We choose a base $\eta_1,\dots,\eta_n$ of ~$V,$ assuming for notational convenience that $V$ has finite rank.
By Example \ref{examp:5.10},
$$(R\veps'+R\veps)\otimes_bV=\begin{bmatrix} aq & cq\\ & 0\end{bmatrix}$$
with respect to the base $\veps'\otimes\eta_1,\dots,\veps'\otimes\eta_n,\veps\otimes\eta_1,\dots,\veps\otimes\eta_2.$
Now \lemref{lem:6.14} tells us that $(R\veps'+R\veps)\otimes_bV$ is indecomposable, whence all elements $\veps\otimes\eta,$ $\veps'\otimes\eta'$ with $\eta,\eta'\in\mfB_2$ are equivalent. In particular, $\veps\otimes\eta\sim\veps'\otimes\eta_0.$ \pSkip

2) Assume that $U^+\ne\{0\}$, i.e., there exist an anisotropic base vector in $U$ with respect to $n(\gm ).$ Our claim again is  that all elements of $\mfB_1\otimes\mfB_2$ are equivalent, whence $U\otimes _bV$ is indecomposable. We choose $\veps_0\in\mfB_1^+,$ and then know by \lemref{lem:6.13}.a that all elements of $\veps_0\otimes\mfB_2$ are equivalent.

Let $\veps\otimes\eta\in\mfB_1\otimes\mfB_2$ be given. We verify equivalence of $\veps\otimes\eta$ with some element of $\veps_0\otimes\mfB_2,$ and then will be done. If $q(\eta)\ne0$, then by \lemref{lem:6.13}.a all elements of $\mfB_1\otimes\eta$ are equivalent, and thus $\veps\otimes\eta\sim\veps_0\otimes\eta.$

Hence, we may assume  that $q(\eta)=0.$ Since $(V,q)$ is indecomposable, there exists some $\eta'\in\mfB_2$ with $c:=b(\eta,\eta')\ne0.$ Let $a:=q(\eta').$ Then
$$(R\eta'+R\eta,q)=\begin{bmatrix} a &c\\ &0\end{bmatrix}.$$
Let $b':=b|(R\eta'+R\eta)=\begin{pmatrix} a &c\\ c &0\end{pmatrix}.$
Then we see from \eqref{eq:5.16} that
$$\gm \otimes_{b'}\begin{bmatrix} a & c\\ & 0\end{bmatrix}=\begin{bmatrix}[c|c] an(\gm ) & c\gm \\ \hline & 0\end{bmatrix}.$$
By \lemref{lem:6.15}, this quadratic module is indecomposable, whence all elements $\veps\otimes\eta,$ $\veps'\otimes\eta'$ with $\veps,\veps'\in\mfB_1$ are equivalent. In particular, $\veps\otimes\eta\sim\veps_0\otimes\eta'.$
\pSkip

3) The remaining case: $U=U^0,$ and $V=V^0,$ i.e., $\gm $ is alternate and $q$ is diagonally zero. Now $(U\otimes V,\tlq)$ is rigid. By \thmref{thm:6.8}, the indecomposable components of $(U\otimes V,\tlq)$ coincide with those of $(U\otimes V,(\gm \otimes b)_f).$ But $\tlq$ has only one companion, whence $(\gm \otimes b)_f=\gm \otimes b=\gm \otimes b_f.$ Invoking \thmref{thm:6.4}, we see that the assertion of the theorem also holds in the case under consideration, where $\gm $ is alternate and $b$ is diagonally zero.
\end{proof}

In general, let $\{U_i \ds|i\in I\}$ denote the set of indecomposable components of $(U,n(\gm ))$. Then
$$U\otimes_b V=\BigPerp_{i\in I}U_i\otimes_b V$$
by \propref{prop:5.6}, whence, applying \thmref{thm:6.16} to each summand $U_i\otimes _bV,$ we obtain a complete list of all indecomposable components of $U\otimes_b V.$ In particular, if $q$ is not diagonally zero, or if $(V,b)$ contains an odd cycle, then the $U_i\otimes_bV$ themselves are the indecomposable components of $U\otimes_bV.$


\begin{thebibliography}{10} %{IMS}
\bibitem{Dol} D. Dol\u{z}na and P. Oblak.
\newblock  Invertible and nilpotent matrices over antirings. {\em Lin. Alg. and Appl.}, 430(1):271–278, 2009.

 \bibitem{EKM} R. Elman, N. Karpenko, and A. Merkurjev. \textit{The Algebraic  and Geometric Theory of Quadratic
 Forms}, Amer. Math. Soc. Colloquium Publications  {56},
Amer. Math. Soc., Providence, RI, 2008.

\bibitem{golan92}
J.~Golan.
\newblock {\em Semirings and their Applications}, Springer-Science + Business, Dordrecht, 1999.
\newblock (Origanlly published by Kluwer Acad. Publ.,  1999.)

%\bibitem{Gros} H. Gross. \newblock  \textit{ Quadratic forms in infinite-dimensional vector spaces. } Progress in Mathematics, Vol. 1, Birkhäsuer, Boston, 1979.

\bibitem{H} Ronen Harari,
 \newblock  Noncommutative Supertropical Semirings and Symbol
 Semialgebras,
Doctoral dissertation, Bar-Ilan University {2015}.
%
%\bibitem{zur05TropicalAlgebra}
%Z.~Izhakian.
%\newblock Tropical arithmetic and matrix algebra.
%\newblock {\em Comm. in Algebra},  {37}(4):1445--1468, {2009}.

\bibitem{IzhakianKnebuschRowen2010LinearAlg}
Z.~Izhakian, M.~Knebusch, and L.~Rowen.
\newblock Supertropical linear algebra.
\newblock {\em Pacific Journal of Mathematics},  266(1):43-–75, 2013.

\bibitem{QF1}
Z.~Izhakian, M.~Knebusch, and L.~Rowen.
\newblock Supertropical quadratic forms I. {\em J. of Pure and Applied Alg.}, to appear.
\newblock (Preprint at arXiv:1309.5729.v2, 2015.)

%\bibitem{QF2}
%Z.~Izhakian, M.~Knebusch, and L.~Rowen.
%\newblock Supertropical quadratic forms II, preprint at     arXiv:1506.03404, 2015.


%\bibitem{QF3}
%Z.~Izhakian, M.~Knebusch, and L.~Rowen.
%\newblock The  variety of quadratic forms on a free module over a supertropical semiring, preprint.

\bibitem{IR1}
Z.~Izhakian and L.~Rowen.
\newblock {Supertropical algebra}.
\newblock   {\em Adv. in Math.}, 225(4):2222--2286, 2010.

\bibitem{Spez} M. Knebusch. \newblock \textit{Specialization of Quadratic and Symmetric Bilinear Forms}, Springer London, 2010.


  \bibitem{Kneser} M. Kneser. \newblock \textit{Quadratische Formen},
  Springer-Verlag, Berlin-Heidelberg, 2002.

\bibitem{Tan}
Y. Tan. \newblock  On invertible matrices over antirings. {\em Lin. Alg. and Appl.}, 423(2):428-444, 2007.

 \bibitem{Witt}
E. Witt.   \newblock  {Theorie der quadratischen Formen in beliebigen K\"orpen.} \newblock  {\em J. Reine Angew. Math.},
  176:31--44, 1937.

 \end{thebibliography}
\end{document}